\documentclass[reqno,11pt,a4paper,amsmath,amssymb]{amsart}
\usepackage{amsmath,amssymb,amsthm,graphicx,mathrsfs,url}
\usepackage[utf8]{inputenc}
\usepackage[usenames,dvipsnames]{color}
\usepackage[colorlinks=true,linkcolor=red,citecolor=green]{hyperref}
\usepackage{enumitem}
\usepackage{csquotes}
\usepackage[all]{xy}
\usepackage{amsfonts}
\usepackage{dsfont}
\usepackage{xcolor}
\usepackage{array}
\usepackage[margin=1.4in]{geometry}
\usepackage[nothm]{thmbox}
\usepackage{mdframed}
\usepackage{graphicx,import}
\usepackage{subcaption}
\usepackage{quiver}
\usepackage{tikz-cd}

\newcommand{\R}{\mathbb{R}}

\newcommand{\C}{\mathbb{C}}

\newcommand{\Hyp}{\mathcal{H}}
\newcommand{\Ell}{\mathcal{E}}
\DeclareMathOperator{\SL}{SL}

\newcommand{\cv}{\mathcal{X}}

\newcommand{\spe}{\mathrm{Sp}}
\newcommand{\Ts}{\mathrm{T}^{*} }

\newtheoremstyle{MyTheorem}
{} 
{} 
{\itshape} 
{} 
{\scshape} 
{.} 
{\newline} 
{} 

\newcommand\Z{\mathbb{Z}}

\newcommand\tr{\mathrm{tr}}

\usepackage{mathtools}
\DeclarePairedDelimiter\abs{\lvert}{\rvert}
\DeclarePairedDelimiter\norm{\lVert}{\rVert}

\usepackage{dirtytalk}

\title{Spectrum of $\mathrm{SL}(2,\mathbb{R})$-characters: the once-punctured torus case}
\author{Florestan Martin-Baillon}
\author{Selim Ghazouani}
\address{}
\thanks{{\footnotesize{The research activities of F.M.B. are partially
funded by the European Research Council (ERC PosLieRep 101018839).}}}

\numberwithin{equation}{section}

\theoremstyle{definition}

\newtheorem{definition}[equation]{Definition}

\theoremstyle{theorem}

\newtheorem{theorem}[equation]{Theorem}
\newtheorem{proposition}[equation]{Proposition}
\newtheorem{corollary}[equation]{Corollary}
\newtheorem{conjecture}[equation]{Conjecture}
\newtheorem{lemma}[equation]{Lemma}

\newtheorem{question}[equation]{Question}

\begin{document}

\date{\today}
\begin{abstract}
	Consider a topological surface $\Sigma$.
	We introduce the \emph{spectrum} of
	representation $\pi_1 \Sigma \longrightarrow \mathrm{SL}(2, \mathbb{R})$,
	which is a subset of projective measured lamination on the surface,
	which captures the directions along which the representation
	fails to be Fuchsian,
	and which characterizes the action of the mapping class group
	on this representation.

	In the case of the once-punctured torus,
	we show that the spectrum of a generic representation is a Cantor set,
	and that it completely describes the dynamics of
	the familly of
	\emph{locally constant cocycles} above interval exchange transformations
	associated to the representation.
\end{abstract}
\maketitle

\tableofcontents

\section*{Introduction}

Let $ \Sigma = \Sigma_{g,n}$ be the compact, oriented surface of genus $g$ with
$n$ marked points. The aim of the present article is to introduce a new object
associated to a representation of a surface group $\rho : \pi_1 \Sigma
\longrightarrow \mathrm{SL}(2,\mathbb{R})$, which we call its \textit{spectrum}.
It is a subset of the set of measured laminations on $\Sigma$ which essentially
records the locus of simple closed curves (thought of as elements in $\pi_1
\Sigma$) where $\rho$ fails to
``look like a Fuchsian representation''.

Our motivation comes from the study of the dynamics of the mapping class group
on spaces of representations.
It is well known that the action of the mapping class group
on the set of Fuchsian (more generally quasi-Fuchsian) representations
is properly discontinuous.
A long-standing problem is to understand the space of
non-Fuchsian representations,
and the action of the mapping class group on this space.
Since the work of Minsky~\cite{minskyDynamicsOutFnPSL2C2013}
for free groups,
adapted to surface groups by~\cite{TholozanWang},
we know that the action of the mapping class group is still properly
discontinuous on the larger set of \emph{simple Fuchsian representations}:
by definition, they are representations that behave like a Fuchsian
representation in restriction to simple geodesics.
This motivates the following definition.

\vspace{2mm}

\paragraph*{\it Loose definition of the spectrum}
The particular feature of simple Fuchsian representations that we are going to focus on is the following:
if $\gamma$ is a class of conjugacy corresponding to a simple curve,
of length $l > 0$ in $\pi_1 \Sigma$ (measured for some arbitrary word metric), the spectral radius of $\rho(\gamma)$ is of the order at least $C^l$ for some constant $C > 1$. 

\vspace{2mm} For a non-simple Fuchsian representation $\rho$,
this property fails to be true. Precisely, there exists a sequence $(\gamma_n)$ of conjugacy classes of elements of $\pi_1 \Sigma$ such that the spectral radius of $\rho(\gamma_n)$ grows sub-exponentially in the length of $\gamma_n$, measured with a word metric still. 
The space of free homotopy classes of simple  closed curves can be
compactified in a natural way using \textit{measured laminations} on $\Sigma$. We thus define the spectrum of a representation as 
\[ \spe(\rho) \subset \mathcal{ML}\]
the set of measured laminations which can be approached by a sequence of simple closed curves $(\gamma_n)$ along which the spectral radius of $\rho(\gamma_n)$ grows subexponentially fast (see paragraph \ref{subsec:spectrum} for a precise definition).

\vspace{2mm}

\paragraph*{\it Mapping class group action on character varieties}

A motivation for the introduction of our spectrum is that it seems to be a very
efficient tool for visualising the way the mapping class group acts on a representation $\rho$. We recall that the mapping class group $\mathrm{MCG}(\Sigma)$ is, for a compact surface $\Sigma$, the set of inner automorphisms of $\pi_1 \Sigma$. 

\vspace{2mm} The character variety is the space 
$$ \cv (\pi_1 \Sigma, \mathrm{SL}(2,\mathbb{R})) := \mathrm{Hom}(\pi_1 \Sigma, \mathrm{SL}(2,\mathbb{R})) \big/ \mathrm{SL}(2,\mathbb{R}) $$ of all representations of $\pi_1 \Sigma \longrightarrow \mathrm{SL}(2,\mathbb{R})$ up to conjugacy.

\vspace{2mm} A typical large element in $f \in \mathrm{MCG}(\Sigma)$ is of
\textit{pseudo-Anosov} type, which means that it is realised by a homeomorphism
$f$ of $\Sigma$ which dilates a given measured foliation $\mathcal{F}_+$ and
contracts another one which we call $\mathcal{F}_-$. A key feature of our
spectrum is that it has the following property. 

\begin{center}
\it
Let $\rho : \pi_1 \Sigma \longrightarrow \mathrm{SL}(2,\mathbb{R})$ be a
representation such that $\mathcal{F}_+ \notin \spe(\rho)$. Then the sequence $ [\rho \circ f^n] $ tends to infinity in $\cv(\pi_1 \Sigma, \mathrm{SL}(2,\mathbb{R}))$.
\end{center}

\noindent This statement can be made more precise, with an effective escape
rate, and also generalises to sequences $(f_n) \in \mathrm{MCG}(\Sigma)$ which
tend to infinity in controlled ways,
see Theorem~\ref{thm:repr_mcg}.

\vspace{2mm} \noindent Our results are restricted to the genus $1$ case, but they suggest that the following general conjecture ought to hold true.

\begin{conjecture}

Let $\Sigma_g$ be a genus $g \geq 2$ closed surface. For a generic $\rho : \Sigma_g
\longrightarrow \mathrm{SL}(2, \mathbb{R})$ which is not simple Fuchsian, the spectrum of $\rho$ is  closed and has empty interior, and locally has the structure of the product of a Cantor set by a codimension $1$ manifold.

\end{conjecture}

This conjecture, coupled with the fact above, suggests that given a representation and a sequence $f_n$ going to infinity in $\mathrm{MCG}(\Sigma_g)$, it is very likely that 
$$ [\rho \circ f_n ] \in \cv(\pi_1 \Sigma_g, \mathrm{SL}(2,\mathbb{R})) $$
diverges. This is because a large element $f_n \in \mathrm{MCG}(\Sigma_g)$ tends
to dilate a measured foliation $\mathcal{F}_n$, and since $\spe(\rho)$ is very
sparse, it is likely that $\mathcal{F}_n$ is in the complement of $\spe(\rho)$.

\vspace{2mm} \noindent On the other hand, it is a long-standing open problem to show that the action $\mathrm{MCG}(\Sigma_g)$ is \textit{ergodic} with respect to a measure in the Lebesgue class, in particular displays very strong recurrence properties. 
\noindent The discussion above suggests that if one is to find such recurrence for a given representation $\rho$, one better look at pseudo-Anosov mapping classes that dilate measured foliations in or near that in the spectrum of $\rho$.

\vspace{3mm}

\paragraph*{\it Cocycles over interval exchange maps}

We hope the discussion above will have convinced that if recurrence is to be found for an orbit of the action of $\mathrm{MCG}(\Sigma)$, one better look at mapping classes having to do with measured foliations in the spectrum of a given representation.

\vspace{2mm} What we think to be the most interesting feature of our approach is our interpretation of the study of the action of such mapping classes near the spectrum of a representation as a \textit{non-abelian extension} of the ergodic theory of \textit{interval exchange transformations} and its \textit{renormalization} theory. 

\vspace{2mm} Given a measured foliation $\mathcal{F}$, one can associate to it an \textit{interval exchange transformation} $T : [0,1] \longrightarrow [0,1]$ by means of taking a first return map on a transverse segment. Whether $\mathcal{F}$ belongs or not to the spectrum of $\rho$ can be reinterpreted in terms of the Lyapunov exponent of a piecewise constant cocycle $\rho : [0,1] \longrightarrow \mathrm{SL}(2,\mathbb{R})$ over $T$. Roughly speaking, such a piecewise constant cocycle is obtained from the image of $\rho$ on generators $\pi_1 \Sigma_g$ corresponding to the particular way $\mathcal{F}$ is obtained from $T$ as a suspension of $T$. We have the following important structural fact

\begin{center}
\it
$\mathcal{F}$ belongs to the spectrum of $\rho$ if (by definition) the Lyapunov exponent of $\rho$ over $T$ vanishes;
\end{center}

\vspace{2mm}
\paragraph*{\it Renormalisation} Rauzy and Veech introduced in the 80s a \textit{renormalization} procedure for IETs to study their ergodic properties. Their algorithm was extended to a cocycle over it to account for the study of piecewise constant cocycles over IETs taking their values in abelian groups, see \cite{Zorich}. This cocycle is often referred to as the Kontsevich-Zorich cocycle. In the particular case of $(\mathbb{R}, +)$-cocycles, one recovers what we want to call the "standard" ergodic theory of IETs, that is the study of Birkhoff sums over it. 

\vspace{2mm} Further structural work allows to interpret piecewise constant
cocycles as cohomology classes in some $\mathrm{H}^1(\Sigma, \mathbb{R})$ and
the action of the Kontsevich-Zorich cocycle as nothing but the action of some
mapping classes in $\mathrm{MCG}(\Sigma)$ on  $\mathrm{H}^1(\Sigma, \mathbb{R})$
via the symplectic representation.
This is a very quick summary of the technical
foundations of the field that is sometimes called ``Teichmüller dynamics''. 

\vspace{2mm} \noindent The key fact that we build upon is the following: 

\begin{center}
\it
The classes in $\mathrm{MCG}(\Sigma)$ which are candidates for recurrence in the orbit of a representation $\rho$ are precisely those given by the renormalization of the measured foliations in the spectrum of $\rho$.
\end{center}

What's more, when looking at the natural action of an extension of the Kontsevich-Zorich cocycle on piecewise constant $\mathrm{SL}(2, \mathbb{R})$ cocycle under renormalization, one recovers (more or less) the action of the mapping class group along the associated renormalization sequence. These observations put together give the following meta statement.

\vspace{3mm}
\fbox{
	\parbox{\textwidth}{ 
		\begin{center}
			The precise study of the mapping class group action on $\cv(\pi_1 \Sigma_g, \mathrm{SL}(2,\mathbb{R}))$ amounts to the renormalization theory of piecewise constant $\mathrm{SL}(2,\mathbb{R})$-cocycles over interval exchange maps.
		\end{center}
}}

\vspace{2mm}
\paragraph*{\it What is the gain?} At this stage one might be tempted to argue that we have swapped a difficult problem for another difficult one. This is rather true, and as such little can be gained without developing a working theory of $\mathrm{SL}(2,\mathbb{R})$-cocycles over IETs.  

\noindent However, the main gain seems to be that, even though hard to prove,
what \textit{ought to be true} seems much clearer in the world of cocycles.
Results on abelian cocycles, powered by the powerful methods from Hodge theory
yielding the hyperbolicity of the Kontsevich-Zorich cocycle, provide strong
hints at the existence of obstructions for a sequence $f_n \in
\mathrm{MCG}(\Sigma)$ corresponding to a measured foliation in the spectrum of
$\rho$ to display recurrence along the orbit of $\rho$. It should also suggest
what the codimension of such obstructions ought to be, and one could hope that recurrence is to be found in whatever remains. 

\vspace{2mm} \noindent That the results in the abelian case should reflect the picture in the non-abelian case is not pure fantasy. Indeed, the theory of $\mathrm{SL}(2, \mathbb{R})$-cocycles over rotations is now rather mature, and work of several authors (\cite{Herman,AvilaKrikorian,AvilaFayadKrikorian}) show that, although much harder to prove, the phenomena in the non-abelian case are the same as in the abelian case (in this particular case, an $\mathrm{SL}(2, \mathbb{R})$-cocycle over a rotation either has positive Lyapunov exponent or tends to be reducible to an $\mathrm{SO}(2)$-cocycle, which is a statement completely analogous to the solvability of the cohomological equation in the abelian case).

\vspace{2mm}
\paragraph*{\it Results}

Our results restrict to the more modest case of representations of the once-punctured torus, where we implement most of the general programme laid out above. They are all a consequence of our study of the extension of the Rauzy-Veech induction to $\mathrm{SL}(2,\mathbb{R})$-cocycles introduced in \ref{sec:renormalisation}. We denote by $\mathrm{T}^*$ the once-punctured torus.

\begin{theorem}
	Let $\rho : \pi_1 ( \mathrm{T}^*) \longrightarrow \mathrm{SL}(2,\mathrm{R})$
	be a
	non-simple Fuchsian representation which we further assume to be injective and that has no parabolic elements in its image. Then the spectrum of $\rho$ is a Cantor set.
\end{theorem}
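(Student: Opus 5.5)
For the once-punctured torus, $\mathcal{PML}(\Ts)$ is a circle, identified with $\mathbb{RP}^1=\R\cup\{\infty\}$. Rational points correspond to simple closed curves and irrational points to minimal foliations. The spectrum $\spe(\rho)$ is then a subset of this circle, and I would prove three things in turn: it is closed, it has empty interior, and it has no isolated points. It must also be nonempty, and this is where the hypothesis that $\rho$ is not simple Fuchsian enters.

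\textbf{Closedness and nonemptiness.} I would work with the Rauzy--Veech (here: Farey/continued fraction) renormalisation of \ref{sec:renormalisation}. A direction $\theta$ corresponds to an infinite path in the Farey tree, with a sequence of bases $(a_n,b_n)$ of $\pi_1\Ts$. The locally constant cocycle over the rotation (two-interval IET) associated to $\theta$ then has, at step $n$, values $\rho(a_n),\rho(b_n)$.

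The key criterion I would establish is the following: $\theta\notin\spe(\rho)$ if and only if some renormalisation step $n$ lands in a \emph{hyperbolic configuration}. This means the pair $(\rho(a_n),\rho(b_n))$ has a common invariant multicone, in the style of Avila--Bochi--Yoccoz uniform hyperbolicity, which persists under all further Farey moves.

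Such a condition is open in $\theta$, since it only depends on a finite Farey prefix, and it is also open in the cylinder set. So the complement of the spectrum is a union of Farey intervals, and $\spe(\rho)$ is closed. If the spectrum were empty, compactness would give uniform exponential growth on all simple curves, and $\rho$ would be simple Fuchsian. So non-simple-Fuchsian gives nonemptiness.

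\textbf{Empty interior.} Take any Farey interval $I$. I want a simple curve $\gamma\in I$ such that $\rho(\gamma)$ together with a neighbouring basis element produces a hyperbolic configuration; its Farey interval is then disjoint from the spectrum.

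To find one, I would use the trace/Markov dynamics. Writing $x=\tr\rho(a)$, $y=\tr\rho(b)$, $z=\tr\rho(ab)$, these satisfy $x^2+y^2+z^2-xyz=\kappa+2$, and Farey moves act by Vieta involutions. Injectivity and the absence of parabolics guarantee that traces along the tree avoid $\pm2$ and are not eventually periodic elliptic. Then a Bowditch/Tan--Wong--Zhang-type argument shows that inside every subtree some vertex has all three traces of absolute value greater than $2$, increasing away from it. That vertex is the escape point I need.

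\textbf{No isolated points.} This is the step I expect to be the main obstacle. Let $\theta\in\spe(\rho)$. If $\theta$ is a rational point, i.e.\ a simple curve $c$ with $\rho(c)$ elliptic, the Dehn twist $T_c$ fixes $\theta$ and acts on nearby directions. I would show that $\theta$ is accumulated by spectrum points coming from the twist orbit: $\rho(c)$ is an irrational rotation (no parabolics, injective), so the twisted pairs recur. If $\theta$ is irrational, I would argue by contradiction. Suppose $\theta$ is isolated. Then deep enough in its Farey path, both neighbouring subtrees are hyperbolic. I would try to show that the two boundary invariant cones glue to give a uniform cone for $\theta$ itself, contradicting zero Lyapunov exponent.

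The difficulty is that these cones can degenerate along the path. I would first attempt to control this using the unbounded-trace growth of the boundary curves together with the Markov equation.
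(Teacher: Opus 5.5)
Your outline (closed, empty interior, no isolated points, plus nonemptiness) has the same shape as the paper's. In both, the complement of the spectrum is the set of directions whose renormalization enters a configuration with a common invariant cone, i.e.\ a pair of type $(\Hyp,\Hyp)^{+}$. But the step that carries the content is missing. Your ``key criterion'' is only asserted, and its nontrivial direction is this: if the renormalized pairs $(A_n,B_n)$ never become $(\Hyp,\Hyp)^{+}$, then $\chi=0$. Closedness needs exactly this direction, since openness of $\{\chi>0\}$ is its contrapositive, and your nonemptiness-by-compactness argument uses it too. The paper gets it from two ingredients for which your plan has no substitute.
\begin{itemize}
\item[(i)] A complete classification, via products of reflections, of how the type of a pair changes under $\tau_1^{N}$ and $\tau_2^{N}$ (Figure~\ref{fig:diag}). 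It shows that a path avoiding $(\Hyp,\Hyp)^{+}$ eventually stays in $(\Ell,\Ell)\cup(\Hyp,\Ell)\cup(\Ell,\Hyp)$. By Lemma~\ref{lem:peage_K} the path then lies in $K\cup\tau_1(K)$, where $K$ is the set where at least two of $A,B,AB$ are elliptic; $K$ is compact by the Markov equation.
\item[(ii)] The Denjoy--Koksma-type Proposition~\ref{prop:bounded_renom}: bounded accelerated renormalizations force $\chi=0$.
\end{itemize}
Separately, deducing ``simple Fuchsian'' from uniform growth along simple curves would require matching the general notion with the paper's pants-holonomy definition. The paper avoids this by arguing directly: iterating the first case of $(\Hyp,\Hyp)^{-}$ would give $\ell_{A^{n}B}\le \ell_B-n\ell_A$, which is impossible for large $n$.

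Your plan for excluding isolated points does not work as stated. At a spectrum point there is no hyperbolicity to glue: $\chi(\theta)=0$ forces the cone constants on nearby hyperbolic branches to degenerate, and nothing in your argument controls them. Similarly, recurrence of the twisted pairs $(A,BA^{N})$ near $(A,B)$ does not by itself produce any direction with zero exponent.

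The missing idea is that elliptic--elliptic configurations regenerate. If $A$ is an irrational rotation, the angles $N\theta_A$ are dense, so by Lemma~\ref{lem:prodRR} the pair $(A,BA^{N})$ is again of type $(\Ell,\Ell)$ for infinitely many $N$ (Proposition~\ref{prop:EE}). From $(\Hyp,\Ell)$ or $(\Ell,\Hyp)$ one reaches $(\Ell,\Ell)$ within two steps. Hence, from any finite stage of the path of a spectrum point, you can branch into a different infinite path that stays in $(\Ell,\Ell)\subset K$ forever. That path is in the spectrum by (ii), and this handles rational and irrational $\theta$ at once.

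The same inspection of the diagram gives empty interior directly. From $(\Ell,\Ell)$, pass to $(\Hyp,\Ell)$; then $\tau_1^{N}$ with $N$ large lands in $(\Hyp,\Hyp)^{+}$. Your appeal to Bowditch/Tan--Wong--Zhang essentially imports the Cantor result itself. It would still require identifying their end invariant with $\spe(\rho)$, which again needs (i) and (ii).
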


The assumption that $\rho$ is injective and has no parabolic elements in its image is not too costly, the set of representations which do not satisfy this hypothesis is a countable union of strict submanifolds of the character variety. Furthermore, the result should still hold true without this hypothesis but it would significantly increase the combinatorial complexity (the reader who will have read through the proof in Section \ref{sec:cantor} should be easily convinced of this fact).

\vspace{2mm} Below are pictures representing the spectra of examples of
representations. The spectrum of a representation is formally defined as the set
of measured foliations for which the Lyapunov exponent of some cocycle vanishes: what is drawn below is the graph of such a Lyapunov exponent as a function of the measured foliation; the spectrum is the locus where it vanishes.

\begin{figure}[h]
\label{fig:graphs}
    \begin{subfigure}[t]{0.4\textwidth}
    \centering
    \includegraphics[width=1\textwidth]{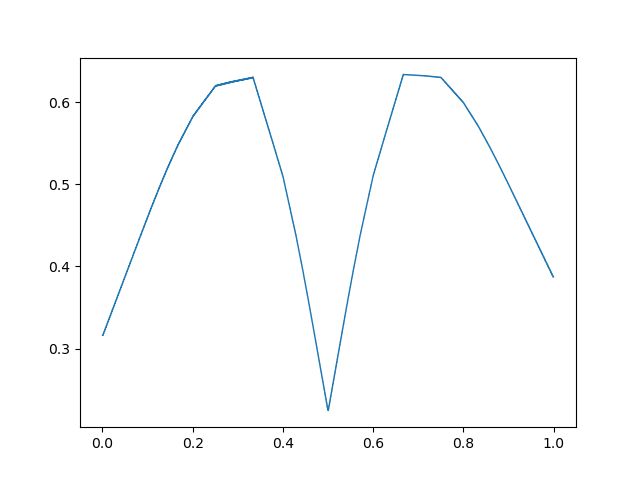} 
    \caption{Fuchsian representation}
    \end{subfigure}
    \begin{subfigure}[t]{0.4\textwidth}
    \centering
	\includegraphics[width=1\textwidth]{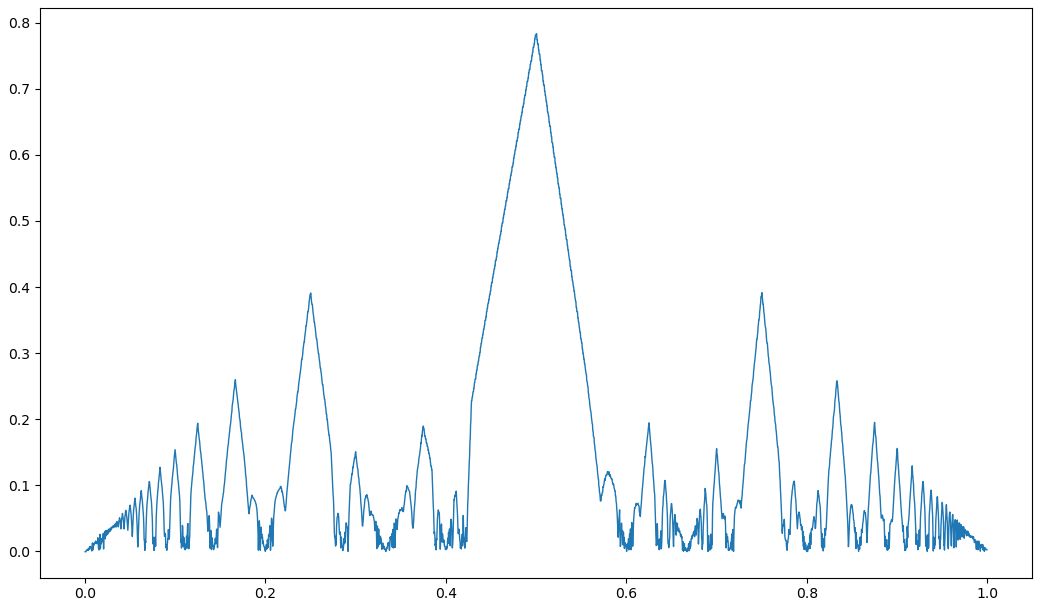}
    \caption{Generic representation}
    \end{subfigure}
     \begin{subfigure}[t]{0.4\textwidth}
    \centering
    \includegraphics[width=1\textwidth]{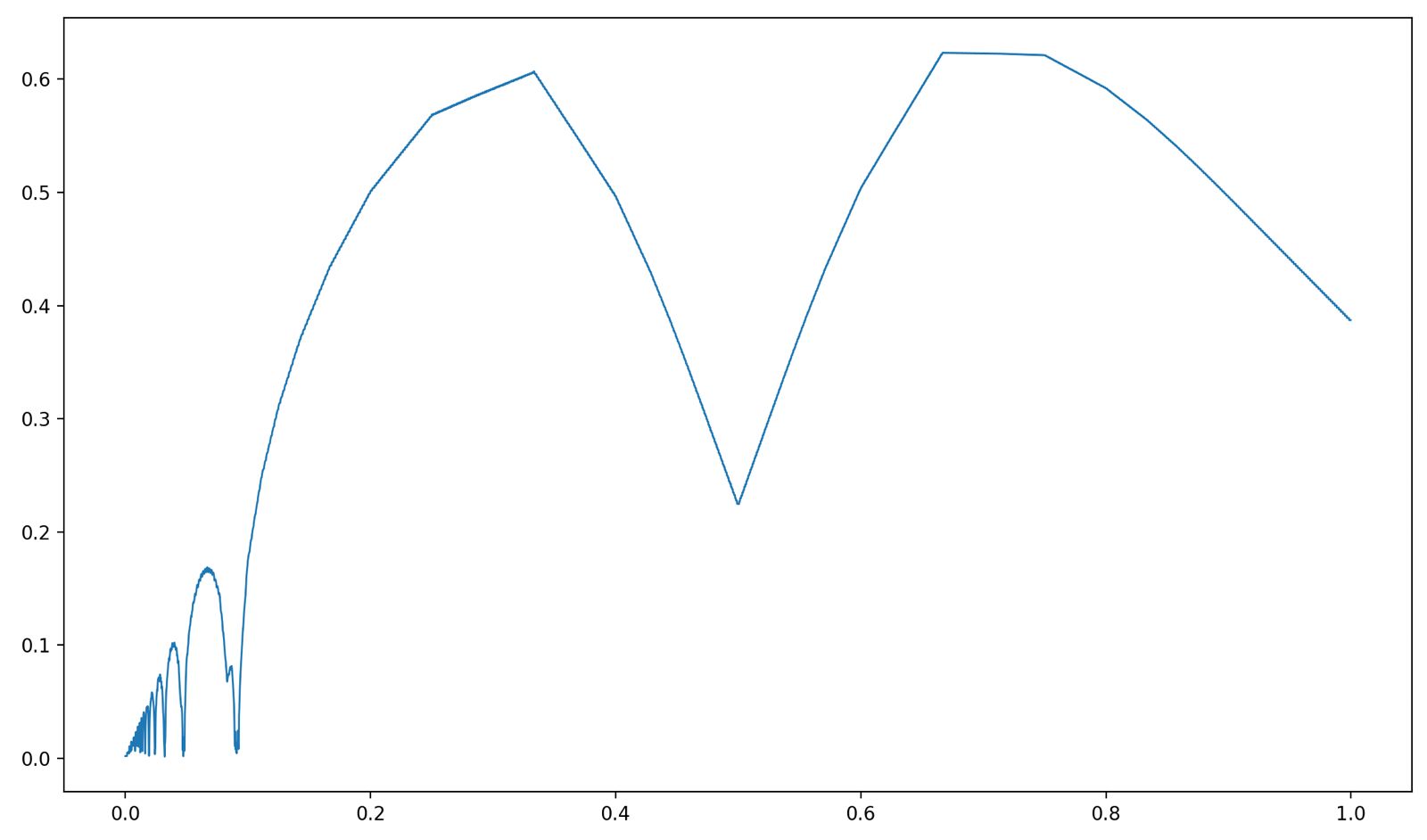}
    \caption{Yet another generic one}
    \end{subfigure}
\end{figure}

\vspace{2mm} \noindent Since the proof of this result goes through a careful analysis of the renormalization algorithm of the associated cocycle, we are able to derive the following description of the mapping class group action. 

\begin{theorem}
	Assume a measured foliation $\mathcal{F}$ belongs to the spectrum of $\rho$. Then if $(f_n)$ is a sequence of pseudo-Anosov maps in $\mathrm{SL}(2, \mathbb{Z}) = \mathrm{MCG}(\mathrm{T}^*)$ whose unstable foliations converge to $\mathcal{F}$, then 
	$$ [\rho \circ f_n] $$ eventually remains within a compact set of the relative character variety it belongs to.
\end{theorem}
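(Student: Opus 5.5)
The plan is to work through the Markov/trace coordinates of the relative character variety. Write $x = \tr \rho(a)$, $y = \tr\rho(b)$, $z = \tr\rho(ab)$. The relative character variety is then the level set
\[ \kappa = x^2+y^2+z^2 - xyz - 2 = \tr\rho([a,b]). \]
It suffices to show that the traces of $\rho\circ f_n$ on the three generators $a$, $b$, $ab$ stay bounded. These three traces are exactly the traces of $\rho$ on the images $f_n(a)$, $f_n(b)$, $f_n(ab)$. Since $f_n\in\SL(2,\Z)$ and the three curves form a superbasis, their images form another superbasis, that is, three simple closed curves with slopes $p/q$, $r/s$, $(p+r)/(q+s)$ and $|ps-qr|=1$.

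The key step is to pick the right representative in the cyclic group generated by $f_n$, that is, to replace $f_n$ by a suitable power or by a composition with elements fixing the relevant level-set compact set. The resulting images $f_n(a)$, $f_n(b)$ should be consecutive convergents of the continued fraction expansion of the slope of $\mathcal{F}_n$. This works because the unstable foliation of $f_n$ is the limit of $f_n^k(\gamma)$, and in $\SL(2,\Z)$ the matrix $f_n$, up to conjugation by a bounded element, is a product of Rauzy--Veech (Farey) steps along the continued fraction expansion of its expanding eigendirection. As $\mathcal{F}_n\to\mathcal{F}$, these expansions agree with that of $\mathcal{F}$ on longer and longer prefixes. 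So $\rho\circ f_n$ is, up to a bounded error, the renormalised cocycle of $\rho$ along $\mathcal{F}$ at a deep renormalisation time. In this language, the traces are the traces of the renormalised piecewise constant cocycle over the two intervals of the rotation associated to $\mathcal{F}$.

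The main input is then a uniform bound: if $\mathcal{F}\in\spe(\rho)$, meaning the Lyapunov exponent of the cocycle over the associated rotation vanishes, then the traces of $\rho$ along all Farey convergents of $\mathcal{F}$ stay bounded. I expect to obtain this from the Cantor set analysis of Section~\ref{sec:cantor}, using the Markov-triple dynamics. Along the Farey path, once a triple has all $|{\rm traces}|>2$ with the right sign pattern (the "escaping" region), the traces grow exponentially with the word length. This would give a positive Lyapunov exponent and hence $\mathcal{F}\notin\spe(\rho)$. Contrapositively, along a foliation in the spectrum the triples must return infinitely often to the compact region $\{\min(|x|,|y|,|z|)\le 2\}\cap\{\kappa\text{ fixed}\}$. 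Bowditch-type arguments should upgrade this to staying in a fixed compact set for all deep times: the maximum trace can only grow through consecutive escapes, and escapes are irreversible.

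The obstacle I expect is uniformity. The tail of $f_n$ beyond the common prefix with $\mathcal{F}$ is not controlled, so it could push the character out of the compact set. I would handle this using the fact that, for large $n$, the remaining part of the Farey word of $f_n$ is itself a continued fraction segment whose endpoint foliation $\mathcal{F}_n$ is close to $\mathcal{F}$. In addition, $f_n$ being pseudo-Anosov with $\rho\circ f_n$ on the same level set means the period-word orbit returns to the same conjugacy region. The claim would then follow by contradiction: if $[\rho\circ f_n]\to\infty$, the traces along $f_n$'s Farey path escape. By irreversibility of escape, every foliation with that prefix, in particular a neighbourhood of $\mathcal{F}$ in the limit, would have positive exponent. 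This contradicts $\mathcal{F}\in\spe(\rho)$ via continuity of the Lyapunov exponent in the renormalised picture.
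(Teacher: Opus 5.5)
The step you flag yourself, the tail of $f_n$, is a genuine gap, and your proposed fix does not work. Suppose the traces escape somewhere in the tail $s_n$ of $f_n=p_ns_n$, where $p_n$ is the common prefix with the renormalization word of $\mathcal F$. Irreversibility of escape then only expels from the spectrum the foliations whose word begins with all of $p_ns_n$. That is a shrinking cylinder around $\mathcal F_n$, and it need not contain $\mathcal F$. Since $\spe(\rho)$ has empty interior, foliations with positive exponent accumulate at every point of the spectrum, so continuity of $\chi$ only gives $\chi(\rho,\mathcal F)=\lim\chi(\rho,\mathcal F_n)\ge 0$, which is no contradiction.

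Your auxiliary devices do not help either:
\begin{itemize}
\item every $[\rho\circ f]$ lies on the same level set, so that remark carries no information;
\item replacing $f_n$ by a power changes $[\rho\circ f_n]$;
\item the element conjugating $f_n$ to a positive word encodes the pre-period of the expansion of $\mathcal F_n$, which is unbounded in general (take $f_n=\varphi_n g\varphi_n^{-1}$).
\end{itemize}

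In fact the tail cannot be controlled from the stated hypotheses alone. Let $(\varphi_m)$ be the trajectory of an irrational $\mathcal F\in\spe(\rho)$, so that $\rho\circ\varphi_m=(A_m,B_m)$. Set $f_m=\varphi_m\circ g_m$, with $g_m$ a positive word containing both letters in the two Dehn twists realizing $\tau_1,\tau_2$.
\begin{itemize}
\item The matrix of $f_m$ is strictly positive, so $f_m$ is pseudo-Anosov. Its unstable direction lies in the cone spanned by $[\varphi_m(a)]$ and $[\varphi_m(b)]$, and this cone shrinks to $\mathcal F$.
\item Yet $\rho\circ f_m$ is $(A_m,B_m)$ renormalized along $g_m$. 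By the empty-interior step in the proof of Theorem~\ref{thm:main_spectrum} together with Proposition~\ref{prop:uh}, you can choose $g_m$ so that this pair is of type $(\Hyp,\Hyp)^+$ with traces larger than $m$.
\end{itemize}

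What the paper proves is Theorem~\ref{thm:repr_mcg}: along the trajectory of $\mathcal F$ itself, $\rho\circ\varphi_{k_n}$ lies in a universal compact set at the accelerated times $k_n$. General sequences $(f_n)$ are reached only through the remark that they stay at bounded distance from a subsequence of that trajectory. That is the hypothesis you must impose and use, for instance $f_n=\varphi_{k_{j_n}}h_n$ with $h_n$ in a fixed finite set. With it, there is no tail left to control.

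For the core step, citing Theorem~\ref{thm:main_renorm} is the paper's route. The trace-coordinate argument you sketch in its place is not sound.
\begin{itemize}
\item On the level set with $c>2$, the region $\{\min(|x|,|y|,|z|)\le 2\}$ is not compact. At $x=2$ the equation reduces to $(y-z)^2=c-2$ with $y$ free, and for $x$ slightly below $2$ the ellipse in $(y,z)$ is arbitrarily long.
\item Returning to a region infinitely often is weaker than eventually staying in a compact set.
\item Irreversibility of escape is asserted rather than proved.
\end{itemize}

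The paper gets the bound from the type diagram. First, $\mathcal F\in\spe(\rho)$ exactly when the path never enters $(\Hyp,\Hyp)^+$ (Propositions~\ref{prop:uh} and~\ref{prop:bounded_renom}). Such a path eventually visits only $(\Ell,\Ell)$, $(\Hyp,\Ell)$ and $(\Ell,\Hyp)$. The compact set is $K=\{\text{at least two of } A,B,AB \text{ elliptic}\}$. The $(\Hyp,\Ell)$ stages are captured because a $\tau_1^N$-block that stays in $(\Hyp,\Ell)$ passes only through pairs $(A,BA^k)$ with $BA^k$ and $BA^{k+1}$ both elliptic (Proposition~\ref{prop:from_HE}, Lemma~\ref{lem:peage_K}). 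Hence the accelerated renormalizations lie in $K\cup\tau_1(K)$. Compactness comes from this two-elliptic confinement, not from a single bounded trace.
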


This theorem can be reformulated in the language of the renormalization of
cocycles, and as such we think that it makes a convincing case for the idea that
$\mathrm{SL}(2,\mathbb{R})$-cocycles tend to behave like abelian cocycles. 

\begin{theorem}
	Let $\rho : [0,1] \longrightarrow \mathrm{SL}(2,\mathbb{R})$
	be a piecewise constant and generic cocycle, over $T$ a minimal $2$-IET.
	Then either
	\begin{itemize}
		\item the Lyapunov exponent of $\rho$ is positive and $\rho$ is \textbf{uniformly} hyperbolic;

		\item the Lyapunov exponent of $\rho$ vanishes and the iterated renormalizations of $\rho$ are uniformly bounded.
	\end{itemize} 
\end{theorem}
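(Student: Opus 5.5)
The strategy is to pass from the cocycle to the representation of $\pi_1(\Ts)$ and run the Rauzy--Veech renormalisation of the pair $(T,\rho)$. A minimal $2$-IET $T$ is a rotation of irrational angle, determined by its length vector. Its Rauzy--Veech induction is the continued fraction algorithm: at each step we pass to the first return map on a subinterval, which is again a $2$-IET. The piecewise constant cocycle, given by its two values $(A,B)$ on the two intervals, is replaced by the induced cocycle, whose values are products of $A,B$ read along the return times. Concretely, each induction step acts on the pair by an elementary Nielsen move $(A,B)\mapsto (A,AB)$ or $(AB,B)$ (up to the order convention). Hence the sequence of renormalised cocycles $(A_n,B_n)$ is exactly $\rho\circ f_n$, where $f_n\in \SL(2,\Z)$ is the product of the elementary matrices given by the continued fraction expansion of the slope of $T$. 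Genericity is taken to mean that $\rho$ is injective with no parabolics and not simple Fuchsian, as in the first theorem. The dichotomy thus becomes a statement about the sequence of traces $(\tr A_n,\tr B_n,\tr A_nB_n)$ moving on the Markov-type cubic level set $x^2+y^2+z^2-xyz=\kappa+2$ under the Fricke moves.

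For the first alternative, suppose that at some step $n_0$ the triple leaves every compact set and lands in a region where all three traces exceed $2$ in absolute value, with the largest one equal to the product of the other two minus something positive (the ``Bowditch-type'' region in which the moves increase traces). There we would construct a cone field, invariant under both Nielsen moves, for the pair $(A_{n_0},B_{n_0})$. Concretely, $A_{n_0}$ and $B_{n_0}$ are hyperbolic with attracting and repelling fixed points interlaced in a fixed cyclic order on $\partial\mathbb{H}^2$. The cone formed by the arc between the attracting fixed points is mapped strictly inside itself by both matrices. A cocycle over a minimal base that takes values in a finite set of matrices sharing a strictly invariant cone is uniformly hyperbolic, by the standard cone criterion. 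Uniform hyperbolicity of the induced cocycle lifts to the original cocycle, because the return times are bounded at a fixed induction level. This gives a positive exponent.

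For the second alternative, suppose the traces never enter the escaping region. We then want the renormalisations to be bounded. The first step is to show that the trace triples stay in a compact set. On the cubic surface, the complement of the escaping region is compact for $\kappa$ in the relevant range, since the non-simple-Fuchsian hypothesis excludes the ends that correspond to Fuchsian boundary. The second step is to upgrade boundedness of characters to boundedness of the matrices themselves. This requires choosing conjugations $C_n$ so that $C_n(A_n,B_n)C_n^{-1}$ stays bounded, which is possible because the character map is proper on irreducible representations, and injectivity of $\rho$ excludes reducible limits. Finally, we must show that this alternative forces a zero exponent. If the exponent $\lambda$ were positive, the norms of the induced matrices would grow like $e^{\lambda q_n}$ along the return times $q_n\to\infty$, up to subexponential conjugation factors. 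This contradicts boundedness, once we control the conjugations $C_n$ (again by compactness). Conversely, a positive exponent together with entry into the escaping region is the first case, so the two cases exhaust the possibilities.

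The main obstacle is the gap between the two alternatives: a trace sequence that never becomes ``expanding'' but still drifts to infinity. One has to rule out trajectories escaping along the boundary of the escaping region, for example near parabolic or trace-$\pm2$ loci. I would first try to exploit the no-parabolic and injectivity hypotheses. The idea is to show that each time a trace comes close to $\pm 2$ along the orbit, the next continued fraction step pushes the triple either into the expanding region or back into a fixed compact set, with constants uniform in $n$. This is the $2$-IET version of the combinatorial analysis announced for Section~\ref{sec:cantor}.
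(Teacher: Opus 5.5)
Your proposal has a genuine gap in the bounded alternative, and this alternative is the substance of the theorem. You claim that the complement of the escaping region on the cubic $x^2+y^2+z^2-xyz=\kappa+2$ is compact. That is false. Put $y=2-\epsilon$; the equation becomes $(x-z)^2+\epsilon xz=\kappa-2+4\epsilon-\epsilon^2$, which for $\kappa>2$ is solved by $x=z\approx\sqrt{(\kappa-2)/\epsilon}\to\infty$. These are pairs of type $(\Hyp,\Ell)$ with $A$ and $AB$ arbitrarily long, lying outside any region where all three traces exceed $2$.

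So boundedness cannot come from a region of the character variety. It has to come from the structure of renormalization trajectories, which is exactly what your last paragraph leaves open as ``the main obstacle''. The paper resolves it in four steps:
\begin{enumerate}
\item It classifies pairs by type $(\Ell,\Ell)$, $(\Hyp,\Ell)$, $(\Ell,\Hyp)$, $(\Hyp,\Hyp)^{\pm}$.
\item It writes isometries as products of reflections ($A=\mathcal{R}_{L_1}\mathcal{R}_{L_2}$, $B=\mathcal{R}_{L_2}\mathcal{R}_{L_3}$) to determine the possible transitions under $\tau_1^N,\tau_2^N$. The key transition is from $(\Hyp,\Ell)$: the pair $(A,BA^n)$ is $(\Hyp,\Ell)$ for $n<N(A,B)$ and $(\Hyp,\Hyp)^+$ for all $n\ge N(A,B)$.
\item A trajectory that never reaches $(\Hyp,\Hyp)^+$ eventually visits only $(\Ell,\Ell)$, $(\Hyp,\Ell)$, $(\Ell,\Hyp)$. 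In a non-escaping $\tau_1^N$-block starting from $(\Hyp,\Ell)$, both $BA^k$ and $BA^{k+1}$ are elliptic for every $k<N$.
\item Hence, from some stage on, every accelerated renormalization lies in $K\cup\tau_1(K)\cup\tau_2(K)$, where $K=\{(A,B) : \text{at least two of } A,B,AB \text{ are elliptic}\}$. This set is compact because two traces in $[-2,2]$ bound the third through the cubic.
\end{enumerate}

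The escaping condition you need is also the geometric configuration $(\Hyp,\Hyp)^+$, meaning the attracting fixed points are adjacent on $\partial\mathbb{H}$. With attracting and repelling points ``interlaced'', as you wrote, each arc between the two attracting points contains a repelling one, so there is no invariant cone; that configuration is $(\Hyp,\Hyp)^-$. Your trace inequality is never shown to produce the correct configuration.

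The zero-exponent step also fails as written. Compactness in the character variety only gives conjugators $C_n$ with $C_n(A_n,B_n)C_n^{-1}$ bounded. It gives no control on $\norm{C_n}$, which may grow, so you cannot play $\norm{A_n}$ off against $e^{\lambda q_n}$. The paper avoids this by fixing a level $m$. An orbit product of length $n$ factors as $M_1\,\omega(A_m,B_m)\,M_0$, where $M_0,M_1$ have bounded length and $\omega$ is a word of length $k\le n/q_m$. Therefore $\norm{\omega(A_m,B_m)}\le\norm{P_m}\norm{P_m^{-1}}D^{k}$. Since $P_m$ stays fixed as $n\to\infty$, this gives $\limsup_n\frac1n\log\norm{\rho_n(x)}\le\log D/q_m$ for every $m$, and letting $m\to\infty$ gives zero exponent.
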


This theorem can be thought of as a non-abelian Denjoy-Koksma inequality. Indeed, renormalizations $\rho_n$ of the cocycle $\rho$ correspond to products of the form 

$$ \rho(T^{q_n-1}(x)) \cdots \rho(T^2(x))  \rho(T(x)) \rho(x) $$ and the theorem tells us that the logarithm of such a product either grows linearly in $q_n$ or is bounded (where the $q_n$ are the denominators of the good approximations of the rotation number of $T$).

\vspace{2mm} \noindent Finally, we further translate our results in the language of the dynamics of the Teichmüller flow lifted to the character variety bundle over the modular surface, as introduced in the article \cite{ForniGoldman}. We believe it to be a good language to formulate our results (it provides a continuous time renormalization operator for cocycles, in the same way the Teichmüller flow over the moduli space of translation surfaces provides a continuous analogue for the Rauzy-Veech induction). In the statement of the theorem $\mathcal{M}_c $ is the character variety bundle over the modular surface, and $(\Phi_t)$ is the lifted Teichmüller flow.

\begin{theorem}
The set of divergent orbits for the lift of the geodesic flow $ (\Phi_t) : \mathcal{M}_c \longrightarrow \mathcal{M}_c $ is an open and dense set. Its complement is exactly the set of recurrent orbits, and it is a lamination which is transversally a Cantor set. 
\end{theorem}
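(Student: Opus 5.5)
The plan is to deduce the statement from the trichotomy for renormalised cocycles (the non-abelian Denjoy--Koksma theorem) together with the Cantor structure of the spectrum (the first theorem). The first step is to set up the dictionary. A point of $\mathcal{M}_c$ is a pair $(X,[\rho])$, where $X$ is a marked flat torus, i.e.\ a point of the unit tangent bundle of the modular surface, and $[\rho]$ is a point of the relative character variety, taken up to the diagonal $\mathrm{SL}(2,\mathbb{Z})$-action. The vertical foliation of $X$ gives a $2$-IET $T$ and, via $\rho$, a piecewise constant cocycle over $T$. Along the geodesic flow, the successive cusp excursions of the orbit of $X$ correspond to the Rauzy--Veech (continued fraction) steps of the vertical direction. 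At those times $\Phi_t(X,[\rho])$ is, up to a bounded flat part, the renormalised cocycle $\rho_n = \rho\circ f_n$, where $f_n$ is the corresponding product of elementary matrices. The fibre coordinate therefore stays bounded if and only if the renormalisations $\rho_n$ stay bounded. This is exactly the alternative in the previous theorem. I would first treat generic $\rho$ (injective, no parabolics) and irrational vertical directions, with a separate remark for orbits whose base goes to the cusp: for rational directions the base itself diverges, so these orbits are counted as divergent.

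The second step establishes the dichotomy orbit by orbit. If the vertical foliation $\mathcal{F}$ is not in $\spe(\rho)$, the Lyapunov exponent is positive, and by the trichotomy the cocycle is uniformly hyperbolic. Then $\|\rho_n\|$ and the traces of $\rho_n$ on the two generators grow exponentially in $q_n$, so the orbit leaves every compact set and is divergent. If $\mathcal{F}\in\spe(\rho)$, the trichotomy says the $\rho_n$ stay uniformly bounded. For a recurrent base orbit this gives recurrence of the lifted orbit. If the base diverges, the orbit diverges anyway. So the precise claim to prove is that recurrent orbits are those over recurrent base geodesics whose vertical direction lies in the spectrum. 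The set of divergent orbits is then the union of the non-spectrum orbits and the orbits over divergent base geodesics.

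The third step deals with openness, density, and the lamination structure. Openness follows from uniform hyperbolicity: a uniformly hyperbolic cocycle has an invariant cone field, which persists under perturbation of both $\rho$ and the IET parameter. So the positive-exponent locus is open in $\mathcal{M}_c$, and on it the exponential growth persists. Density follows from the first theorem: in each fibre over a fixed base point, the set of directions in $\spe(\rho)$ is a Cantor set, so it has empty interior, and its complement is dense; density in the character-variety direction is then handled by the genericity assumption. For the lamination, the flow $\Phi_t$ together with the strong stable direction (which changes only the horizontal data and not the vertical foliation) preserves the property $\mathcal{F}\in\spe(\rho)$. Hence the recurrent set is saturated by weak-stable leaves. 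Its transversal can be taken to be the circle of vertical directions at fixed $(X,\rho)$, and on that circle it is $\spe(\rho)$, which is a Cantor set. One also has to check that $\spe(\rho)$ varies continuously with $\rho$, in the Hausdorff sense, so that the local product structure holds. I would obtain this from the uniform, locally constant combinatorics of the renormalisation used in Section~\ref{sec:cantor}.

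I expect the main obstacle to be the identification in the first step between boundedness of the discrete renormalisations $\rho_n$ and recurrence of the continuous flow $\Phi_t$, and in particular the behaviour near the cusp and on the non-generic locus. The plan there is to use the fact that, between Rauzy times, the flow acts only by a bounded diagonal matrix on the base and trivially on the character variety. For non-generic representations, those that are non-injective or have parabolics, I would argue either by excluding them, since they form a countable union of submanifolds and so do not affect density, or by the same combinatorics at the cost of extra cases.
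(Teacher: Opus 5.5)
Your first two steps follow the paper's sketch. Return times of the base geodesic to a simply connected set $U$ are the accelerated renormalization times. A direction outside $\spe(\rho)$ renormalizes into $(\Hyp,\Hyp)^+$ and diverges, by Proposition~\ref{prop:uh} and Theorem~\ref{thm:repr_mcg}. An irrational spectral direction has renormalizations in $K\cup\tau_1(K)$. Density comes from $\spe(\rho)$ having empty interior.

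\textbf{First gap: openness.} You explicitly count orbits over rational directions as divergent. Cone persistence only shows that the set $V$ of points whose renormalization eventually lands in $(\Hyp,\Hyp)^+$ is open; it says nothing at rational directions lying in $\spe(\rho)$, and there openness is false. Take a simple curve $\gamma$ with $\rho(\gamma)$ elliptic; such curves exist whenever the spectrum is non-empty, by the corollary following Theorem~\ref{thm:repr_spectrum}. Directions close to that of $\gamma$ are reached by twisting by $\rho(\gamma)^N$ with $N$ large. By Proposition~\ref{prop:EE} and its $(\Ell,\Hyp)$ analogue, infinitely many of these twists land in $(\Ell,\Ell)$, and from there one can stay in $(\Ell,\Ell)$ forever. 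So the divergent (cusp-escaping) orbit over the direction of $\gamma$ is a limit of non-divergent orbits over irrational spectral directions, and ``non-spectral $\cup$ cusp-escaping'' is not open. The paper sidesteps this by discarding cusp-escaping orbits and proving only that divergent orbits \emph{contain} the open dense set $V$. Over non-escaping base orbits and generic $\rho$, the complement of $V$ is exactly the set of non-divergent orbits. Your claim has to be weakened in the same way.

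\textbf{Second gap: the lamination step.} The circle of directions at fixed $(X,\rho)$ is transverse to the weak-stable leaves only inside a flat slice $\{[\rho]=\text{const}\}$. For a one-dimensional Cantor transversal in $\mathcal{M}_c$, the leaves must also contain the two $\cv_c$ directions. That requires local holonomy homeomorphisms $\spe(\rho)\to\spe(\rho')$ for nearby $\rho'$, and Hausdorff continuity of $\rho\mapsto\spe(\rho)$ would not provide them: sub-Cantor sets can appear or collapse in a Hausdorff-continuous family.

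Moreover, the ``uniform, locally constant combinatorics'' you want to use does not exist. From $(\Ell,\Ell)$, whether $(A,BA^N)$ is $(\Ell,\Ell)$ or $(\Ell,\Hyp)$ depends on where $N\theta_A$ falls relative to the window of Lemma~\ref{lem:prodRR}. For large $N$ this changes under arbitrarily small perturbations of $\rho$, each change passing through a representation with $BA^N$ parabolic. Such non-generic representations are dense. So excluding them is not an option for a local structure statement, and Theorem~\ref{thm:main_renorm} is not available on them. Across the locus where $\rho(\gamma)$ is parabolic for a simple $\gamma$, the direction of $\gamma$ changes from being accumulated by $\spe(\rho)$ to lying in a gap; this follows from Proposition~\ref{prop:from_HE} and the $(\Hyp,\Hyp)^-$ proposition of Section~\ref{sec:tech_lem}.

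The paper's sketch does not prove the lamination claim either. What Sections~\ref{sec:cantor} and~\ref{sec:back_to_rep} actually give is a slice-wise statement: for generic $\rho$, the trapped set in the flat slice through $[\rho]$ is the union of weak-stable leaves with forward direction in $\spe(\rho)$, hence transversally the Cantor set $\spe(\rho)$ within that slice. Going beyond this needs a genuinely new argument.
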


In particular it shows that the dynamics of $ (\Phi_t)$ on $ \mathcal{M}_c $ is not going to directly reflect that of the mapping class group, which is expected to act ergodically in our case  (unlike in the case of character varieties for compact Lie groups, see \cite{ForniGoldman}). We expect it to capture finer properties of this action, as our result shows.

\vspace{2mm}
\paragraph*{\it Related work, references and comments} A general reference for the case of $\mathrm{SL}(2,\mathbb{R})$-representations of the fundamental group of a once-punctured torus and the associated mapping class group action is the article of Goldman \cite{Goldman}.

\vspace{2mm} \noindent An important article to mention is~\cite{Bowditch}, which introduces a set which happens
to coincide with our spectrum
in the particular case of the relative character variety containing finite
volume hyperbolic structures on the punctured torus. The author mentions there
that this set is probably a Cantor set~\cite[p.729]{Bowditch}.
This set has been further studied in~\cite{tanEndInvariantsSL2C2008}
for representations with values in $ \SL(2,\C) $,
under the name of \emph{ends invariant} of a representation.
They also prove that this set is a Cantor set,
in the case of $ \SL(2,\R) $-representations that we study here.
The fact that our spectrum coincide with their ends invariant
(in the case of the once-punctured torus)
follows from
Theorem~\ref{thm:main_renorm}.
However, we believe that our definition of the spectrum
is more adapted to the more general setting of higher genus surfaces.

\vspace{2mm} \noindent The link between the action of the mapping class group
on the character variety of the punctured torus and the
Lyapunov exponents of some particular cocycles
has been exploited,
in the study of the spectral properties
of certain 1-dimensional Schrödinger operators
with quasi-periodic potential,
see~\cite{damanikSchrodingerOperatorsDynamically2017}
and references therein.

\vspace{2mm} \noindent Lyapunov exponents of representations
can be considered from the point of view of complex dynamics
on character variety, where they correspond to certain
\emph{Green functions}, see~\cite{girandDynamicalGreenFunctions2014a}.

\vspace{2mm} \noindent Let us remark that,
in contrast with the related works mentioned above,
the techniques that we use here do not rely on the explicit
trace coordinates of the character variety of the once-punctured torus,
nor on the particular structure of the complex of curves in this case,
but instead on elementary hyperbolic geometry considerations
and standard ideas from ergodic theory.
We hope that these methods will be more amenable to generalizations
to higher genus surfaces.
However, as such, this approach is limited to $ \SL(2,\R) $-representations.

\vspace{2mm} \noindent Our results imply that,
if $ \lambda $ is a lamination that does not belong
to the spectrum of a (generic) representation $ \rho $,
then
$ \rho $ is \emph{uniformly hyperbolic over $ \lambda $}
(more precisely, the lifted geodesic flow
introduced in Section~\ref{subsec:spectrum} is uniformly hyperbolic
restricted to the support of $ \lambda $).
This notion has been studied, in a vastly more general setting,
in~\cite{maloniDpleatedSurfacesTheir2023},
where it corresponds to the notion of
$ \lambda $-Anosov representations.
It is linked to the notion of pleated surface.


\paragraph*{\bf Organization of the article}
In Section~\ref{sec:def},
after recalling standard facts about the character variety,
we define the spectrum of a representation.
We explain how it can be studied using interval exchange transformations.

In Section~\ref{sec:coc_iet} and~\ref{sec:renormalisation},
we introduce the notion of a locally constant cocycle above an IET,
and explain a renormalization scheme adapted to this setting.

In Section~\ref{sec:cantor}, we prove our main results
for cocycles,
while in Section~\ref{sec:back_to_rep} we prove the
corresponding results for representations.

Section~\ref{sec:hyp_geom} and~\ref{sec:tech_lem}
contain elementary technical results that are used in
the proofs of the main results.

In Section~\ref{sec:flow},
we explain the consequences of our results
for a version of Teichmüller flow,
and we conclude in
Section~\ref{sec:open}
by a list of open questions.

\section{Spectrum of a representation}\label{sec:def}
 Everything we introduce in this paragraph could have been done in the general
 case of a representation of an arbitrary surface into $\mathrm{SL}(2,
 \mathbb{C})$. Since we end up only proving substantial results in the particular case of the once-punctured torus and $\mathrm{SL}(2, \mathbb{R})$, and that the description of the space of measured laminations simplifies drastically in this case, we decided to restrict ourselves to this case. 

\subsection{Character variety}

We denote by $\mathrm{T}^*$ the once-punctured torus, its fundamental group is
isomorphic to the free group on two generators
$ \left<a,b \right> $. We denote by 
$$ \cv(\mathrm{T}^*, \mathrm{SL}(2,\mathbb{R}))$$
the space of representations of the group $\pi_1 \mathrm{T}^*$ into $\mathrm{SL}(2,\mathbb{R})$ up to conjugacy. Formally it is defined as the quotient $\mathrm{Hom}(\pi_1 \mathrm{T}^*, \mathrm{SL}(2,\mathbb{R}))/\mathrm{SL}(2,\mathbb{R})$ where $\mathrm{SL}(2,\mathbb{R})$ acts by conjugacy.

\vspace{2mm}

\paragraph{\it Relative character varieties} Let $[\rho]$ be a class in $
\cv(\mathrm{T}^*, \mathrm{SL}(2,\mathbb{R}))$ represented by some representation
$\rho$.  Consider the free homotopy class of loops representing the (oriented)
boundary component of $\mathrm{T}^*$.
It corresponds to the conjugacy class in $\pi_1 \mathrm{T}^*$
of the commutator
$ \left[ a,b \right] $,
which we denote by $\mathbf{c}$.
The trace of $\rho(\left[ a,b \right])$ only depends on $[\rho]$. We define 
\[ \cv_c(\mathrm{T}^*, \mathrm{SL}(2,\mathbb{R}))\]
the \textit{relative character variety} which consists of those classes of
representations which map a boundary curve onto an element of trace $c \in
\mathbb{R}$,
that is such that
$ \tr \rho(\left[ a,b \right]) = c $.
The behavior of the representation
depends strongly on this trace
$ c $,
and we describe the different possible cases
(see~\cite{Goldman}).

The first case is $ c < 2 $:
the relative character variety
$\cv_c(\mathrm{T}^*, \mathrm{SL}(2,\mathbb{R}))$
has four connected components,
each of which is isomorphic to
a Teichmüller space of hyperbolic structures
on the once-punctured torus,
with different behaviors at the puncture depending
on $ c $.
The action of the mapping class group on these
Teichmüller components is properly discontinous.

When
$ c < -2 $,
they correspond to hyperbolic structures
with a funnel,
and when
$ c = -2 $
to hyperbolic structures with a cusp.
In these cases,
the representations
are faithful and discrete.
When
$ c \in (-2,2) $,
they correspond to hyperbolic structures
with a conical singularity at the puncture.
It is remarkable that
in that case,
the representations are
not discrete,
even though the
action of the
mapping class group is properly discontinous.
This is explained by the fact
that they are \emph{simple Fuchsian},
in the terminology of~\cite{TholozanWang}.

The case $ c = 2 $ is slightly degenerate,
and correspond to representations that take
values in an abelian group,
and we won't consider it in this text.
There are still four components,
and up to taking a ramified covering of degree 2,
the action of the mapping class group on
each of these components
is isomorphic to the linear action of
$ \SL(2,\Z) $ on $ \R^{2} $,
which is well understood.

The remaining case
$ c > 2 $
is the one we are mainly interested in.
In that case the relative character variety is connected.
There exists an open set $ U $ (empty if $ c < 18 $)
consisting of representations
that are holonomies of hyperbolic structures on a \emph{pair of pants}:
indeed, a pair of pants has the same fundamental group as the
once-punctured torus.
The mapping class group acts properly discontinously on this open set.
It is only on the complement of $ U $
that interesting dynamics can take place,
and Goldman showed that the action is \emph{ergodic}
there
(with respect to the Lebesgue measure).
Consequently, we only consider the case
$ c > 0 $ in the remaining of the text.
This has the following geometric consequence:
if $ A = \rho(a) $ and $ B = \rho(b) $
are hyperbolic isometries,
then the axis of $ A $ and $ B $
do not intersect
(as this classically implies that
$ \tr \left[ A,B \right] < 2 $).

In our specific context,
we define the
\emph{simple Fuchsian} representations
to be
the case $ c < 2 $,
and the representations that belong to $ U $
for $ c > 2 $
(see~\cite{tholozanSimpleAnosovRepresentations2023}
for the general definition).

\vspace{2mm} It is probably worth mentioning that for $c \neq 2$, $ \cv_c(\mathrm{T}^*, \mathrm{SL}(2,\mathbb{R}))$ together with its quotient topology is a (non-compact) topological surface.
It is homeomorphic to a union of connected components of the affine surface in
$ \R^3 $ defined by the equation
\[
	x^2 + y^2 + z^2 - xyz = c + 2,
\]
the so-called \emph{Markov surface},
see~\cite{Goldman}.

\subsection{Space of measured laminations and measured foliations}

Now consider the once-punctured torus $\mathrm{T}^*$, which we will think of as the compact torus $\mathrm{T}$ with one marked point $p$, so that $\mathrm{T}^* = \mathrm{T} \setminus \{p\}$. 

\vspace{2mm}

\paragraph{\it Measured foliations} Endow $\mathrm{T}$ with an identification with $S^1 \times S^1 = \mathbb{R}^2 / \mathbb{Z}^2$. A \textit{measured foliation} on $\mathrm{T}^*$ is just a linear foliation on $\mathbb{R}^2 / \mathbb{Z}^2$. A linear foliation in this context is the trace on $\mathbb{R}^2 / \mathbb{Z}^2$ of the foliation of $\mathbb{R}^2$ by straight lines of a given slope. 

\vspace{2mm} As such, the space of (oriented) measured foliations naturally
identifies with $S^1$, the unit circle.

\vspace{2mm}

\paragraph{\it Measured laminations} We recall the following fact. Fix a
finite-volume complete hyperbolic metric on $\mathrm{T}^*$. Measured foliations
are in one-to-one correspondence with \textit{ergodic} measured geodesic
laminations on $\mathrm{T}^*$. That is not very important for our purpose, but
since some of the objects we are working with are traditionally more often
introduced in terms of geodesic laminations, we mention it for the reader who is more used to this language.

\subsection{Spectrum} 
\label{subsec:spectrum}

We now define in an abstract way the \textit{spectrum} of a given representation. We will see later that it can be computed in a rather explicit way, using products of matrices used to define the representation. 

\vspace{2mm} Given a representation $\rho : \pi_1 \mathrm{T}^* \longrightarrow \mathrm{SL}(2,\mathbb{R})$, we can define the flat vector bundle 

$$ E_{\rho} := (\widetilde{\mathrm{T}^*} \times \mathbb{R}^2)/ (x, \vec{v}) \sim
(\gamma \cdot x, \rho(\gamma) \cdot \vec{v}).$$ It defines a smooth rank $2$
vector bundle over $\mathrm{T}^*$ endowed with a flat connection. The smooth
bundle itself extends to $\mathrm{T}$ (but the flat connection doesn't), we endow this extension with a smooth metric $g$. 

\vspace{2mm} \paragraph{\it The lifted dynamics} Endow $\mathrm{T}$ with a flat
metric. The geodesic flow restricted to $\mathrm{T}^*$ naturally lifts to a dynamical system to $E_{\rho}$, thanks to the parallel transport given by the flat connection on $E_{\rho}$.

\vspace{2mm} \paragraph{\it Lyapunov exponent}Now consider a measured foliation
$\mathcal{F}$ on $\mathrm{T}$. There exists an angle $\theta$ such that
$\mathcal{F}$ is the foliation by straight line of the flat torus $\mathrm{T}$.
The geodesic flow therefore gives a natural flow on $\mathrm{T}$ whose leaves
are exactly that of $\mathcal{F}$, we denote this flow by $(\varphi_t )_{t\in
\mathbb{R}}$. Given $x \in \mathrm{T}^*$, we denote by 
\[ \phi_t(x) : E_{\rho}(x) \longrightarrow E_{\rho}(\varphi_t(x)) \]
the map induced from the fibre at $x$ to that at $\varphi_t(x)$. Its a linear map and we denote by $||\phi_t(x)||$ the norm defined using the norms induced our arbitrary choice of a metric $E_{\rho}$.

\begin{definition}[Lyapunov exponent]

The Lyapunov of the representation $\rho$ with respect to the measured lamination $\mathcal{F}$ is 
$$ \chi(\rho, \mathcal{F}) := \lim_{t \rightarrow + \infty}{\frac{1}{t}\int_{x \in \mathrm{T}}{\log ||\phi_t(x)||dx}}.$$

\end{definition}

\vspace{2mm} \paragraph{\it Spectrum} One will have noticed that as such, our definition of the Lyapunov exponent depends on a choice of flat metric on $\mathrm{T}$ (which then serves to parametrise the measured foliation $\mathcal{F}$. However, the choice of a volume $1$ metric makes the exponent well-defined, and we will make this assumption. Another remark is that there are many other ways of defining the Lyapunov exponent, using a hyperbolic metric on $\mathrm{T}^*$ and measured laminations instead. At the end of the day, all one needs is a way of parametrising laminations/foliations that, when coupled to the transverse measure, gives total mass one to the total space and we have just chosen one.

\begin{definition}[Spectrum]

Let $\rho : \pi_1 \mathrm{T}^* \longrightarrow \mathrm{SL}(2, \mathbb{R})$. The spectrum of $\rho$ is the set 

$$ \spe(\rho) := \{ \theta \in S^1 \ | \ \chi(\rho, \mathcal{F}_{\theta}) = 0 \}.$$ 

\end{definition}

\subsection{Mapping class group action and the spectrum}

In this paragraph, we explain how the spectrum changes under the action of the
mapping class group. We recall that $\mathrm{T} = \mathbb{R}^2 / \mathbb{Z}^2$,
we let $p$ be our marked point be $p = (0,0)$ so that $\mathrm{T}^*$ is effectively $(\mathbb{R}^2 / \mathbb{Z}^2) \setminus \{(0,0)\}$. The mapping class group of $\mathrm{T}^*$ can be realised by the action of $\mathrm{SL}(2, \mathbb{Z})$ on $(\mathbb{R}^2 / \mathbb{Z}^2)$ (by that we mean that $\mathrm{SL}(2, \mathbb{Z})$ is a subgroup of the group of orientation-preserving homeomorphisms fixing $p$, which projects bijectively onto the mapping class group of $\mathrm{T}^*$). Furthermore, it acts on the space of oriented measured laminations as the projective action of $\mathrm{SL}(2, \mathbb{Z})$ on the set of half lines of $\mathbb{R}^2$.

\vspace{2mm}  Let $\theta \in S^1$ and $A \in \mathrm{SL}(2,\mathbb{Z})$.  We have the following.

\begin{proposition}

For all $\theta \in S^1$ and $A \in \mathrm{SL}(2,\mathbb{Z})$ we have 
$$ \chi(\rho, \mathcal{F}_{\theta}) = 0 \Leftrightarrow \chi(\rho \circ A^{-1}, \mathcal{F}_{A \cdot \theta}) = 0. $$

\end{proposition}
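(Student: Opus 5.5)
The plan is to realise $A$ as a diffeomorphism of the torus and transport everything along it. View $A \in \mathrm{SL}(2,\mathbb{Z})$ as the linear diffeomorphism $f_A$ of $\mathrm{T} = \mathbb{R}^2/\mathbb{Z}^2$. It fixes $p=(0,0)$, preserves Lebesgue measure, and restricts to a diffeomorphism of $\mathrm{T}^*$. It sends straight lines of direction $\theta$ to straight lines of direction $A\cdot\theta$. The induced map $(f_A)_*$ on $\pi_1 \mathrm{T}^*$ is the automorphism realising $A$ in the mapping class group. With the paper's conventions, pulling back by $f_A$ relates $\rho\circ A^{-1}$ to $\rho$; if the conventions go the other way, one swaps $A$ and $A^{-1}$, which does not affect the argument.

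\textbf{Step 1: identify the bundles.} Show that $f_A$ lifts to a flat bundle isomorphism
\[
F : E_{\rho} \longrightarrow E_{\rho\circ A^{-1}}
\]
covering $f_A$. On the universal cover this is $(x,v)\mapsto(\tilde f_A(x),v)$, and it is equivariant because $\tilde f_A(\gamma x) = (f_A)_*(\gamma)\,\tilde f_A(x)$. Since $F$ is flat, it intertwines parallel transports. Because $f_A$ maps the leaves of $\mathcal{F}_\theta$ to the leaves of $\mathcal{F}_{A\cdot\theta}$, we get
\[
F\circ \phi^{\theta}_t(x) = \phi^{A\theta}_{ct}(f_A x)\circ F ,
\]
where $c = |A u_\theta|$ and $u_\theta$ is the unit vector of direction $\theta$. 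The key point is that $f_A$ maps the unit-speed flow of slope $\theta$ to the flow of slope $A\cdot\theta$ at constant speed $c>0$.

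\textbf{Step 2: compare norms and integrals.} The difficulty is that $F$ is not an isometry for the chosen metrics $g$, $g'$. I would handle this by noting that $F$ extends to a smooth bundle map over the compact torus $\mathrm{T}$. This is because the smooth extension of $E_\rho$ across $p$ is canonical up to isomorphism, and $f_A$ fixes $p$. Hence $\|F\|$ and $\|F^{-1}\|$ are bounded by some constant $K$. It follows that
\[
\bigl|\log\|\phi^{\theta}_t(x)\| - \log\|\phi^{A\theta}_{ct}(f_Ax)\|\bigr| \le 2\log K .
\]
Integrate over $x\in\mathrm{T}$ and use that $f_A$ preserves Lebesgue measure. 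Dividing by $t$ and letting $t\to\infty$ gives
\[
\chi(\rho,\mathcal{F}_\theta) = c\,\chi(\rho\circ A^{-1},\mathcal{F}_{A\theta}) .
\]
Since $c>0$, one exponent vanishes exactly when the other does. This also confirms that the exponent changes by a positive factor, consistent with the remark in the paper on the normalisation of the metric.

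\textbf{Main obstacle.} I expect the hard step to be the extension of $F$ near the puncture with uniformly bounded norm, which is what makes the constant $K$ exist. The flat connection does not extend across $p$, and orbits passing near $p$ could a priori make $\|F\|$ degenerate. To settle it, I would trivialise both bundles on a small disk $D$ around $p$, fixed by $f_A$, using a smooth frame. In that frame $F$ restricted to $D\setminus\{p\}$ is given by the constant identification composed with the smooth map $f_A$, so it is smooth on $D$. If this direct approach proves awkward, the fallback is to observe that $\chi$ does not depend on the choice of metric (by the same compactness argument), and then simply define the metric on $E_{\rho\circ A^{-1}}$ as the pushforward of $g$ by $F$. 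With that choice $F$ is an isometry and $K=1$.
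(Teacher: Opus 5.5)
Your proposal is correct and is the paper's argument made explicit. The paper only notes that $A$ carries $\mathcal{F}_{\theta}$ to $\mathcal{F}_{A\cdot\theta}$ and $E_{\rho}$ to $E_{\rho\circ A^{-1}}$, and that a reparametrisation rescales the exponent by a positive constant; you supply the factor $c=|Au_\theta|$ and the measure-preservation and bounded-distortion details it omits. One small correction to your ``main obstacle'' paragraph: the smooth extension across $p$ is not literally canonical (extensions to $\mathrm{T}$ with any Euler number exist), but the bound on $\|F^{\pm 1}\|$ still holds if the extensions are built, as the paper implicitly assumes, from trivialisations near $p$ that are parallel along rays from $p$, because in such trivialisations $F$ is a continuous function of the angle alone ($f_A$ being linear) and hence bounded.
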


\begin{proof}

It suffices to note that $A$ maps $\mathcal{F}_{\theta}$ onto $\mathcal{F}_{A \cdot \theta}$ and the flat fibre bundle $E_{\rho}$ onto $E_{\rho \circ A^{-1}}$. Since positivity of the Lyapunov exponent for a $1$-dimensional foliation does not depend on the parametrisation (a change of parametrisation multiplies the Lyapunov exponent by a positive constant), the proposition is proved.

\end{proof}

\subsection{Reduction to interval exchange maps}

In order to study the Lyapunov exponent more directly, we are going to reduce its study to that of special cocycles over rotations, which we will in turn interpret in terms of interval exchange maps (see paragraph \ref{subsec:iets} for precise definitions).

\vspace{2mm} \paragraph{\it First-return map} Let $\mathcal{F}$ be an oriented
arbitrary measured foliation with transverse measure $\mu$. There always exists
an embedded interval circle $\gamma$ in $\mathrm{T}^*$ that is transverse to $\mathcal{F}$ such that the first return map of $\mathcal{F}$ is a rotation. Precisely, $\gamma$ inherits a measure from $\mathcal{F}$ which gives a natural identification between $\gamma$ and $\mathbb{R}/ \mu(\gamma
) \mathbb{Z}$.

\vspace{2mm} Informally, we now think of this first-return map as an \textit{interval exchange transformation} (see paragraph \ref{subsec:iets}), by thinking of the circle $\mathbb{R}/ \mu(\gamma
) \mathbb{Z}$ as whose $[0, \mu(\gamma)]$ extremities have been glued together.
There are many ways of doing this (a choice of a point on $\gamma$ essentially) and we choose the point $q$ corresponding to the first intersection of the backward orbit of $\mathcal{F}$ through $p$.
In this coordinate, the first return map of $\mathcal{F}$ is an interval exchange transformation, which is an invertible map from $[0,\mu(\gamma)]$ to $[0,\mu(\gamma)]$ with one discontinuity which is a translation restricted to its continuity intervals. 

\begin{figure}[!h]
	\begin{center}
		\def\svgwidth{0.4 \columnwidth}
\begingroup%
  \makeatletter%
  \providecommand\color[2][]{%
    \errmessage{(Inkscape) Color is used for the text in Inkscape, but the package 'color.sty' is not loaded}%
    \renewcommand\color[2][]{}%
  }%
  \providecommand\transparent[1]{%
    \errmessage{(Inkscape) Transparency is used (non-zero) for the text in Inkscape, but the package 'transparent.sty' is not loaded}%
    \renewcommand\transparent[1]{}%
  }%
  \providecommand\rotatebox[2]{#2}%
  \newcommand*\fsize{\dimexpr\f@size pt\relax}%
  \newcommand*\lineheight[1]{\fontsize{\fsize}{#1\fsize}\selectfont}%
  \ifx\svgwidth\undefined%
    \setlength{\unitlength}{393.85645015bp}%
    \ifx\svgscale\undefined%
      \relax%
    \else%
      \setlength{\unitlength}{\unitlength * \real{\svgscale}}%
    \fi%
  \else%
    \setlength{\unitlength}{\svgwidth}%
  \fi%
  \global\let\svgwidth\undefined%
  \global\let\svgscale\undefined%
  \makeatother%
  \begin{picture}(1,1.21916718)%
    \lineheight{1}%
    \setlength\tabcolsep{0pt}%
    \put(0,0){\includegraphics[width=\unitlength,page=1]{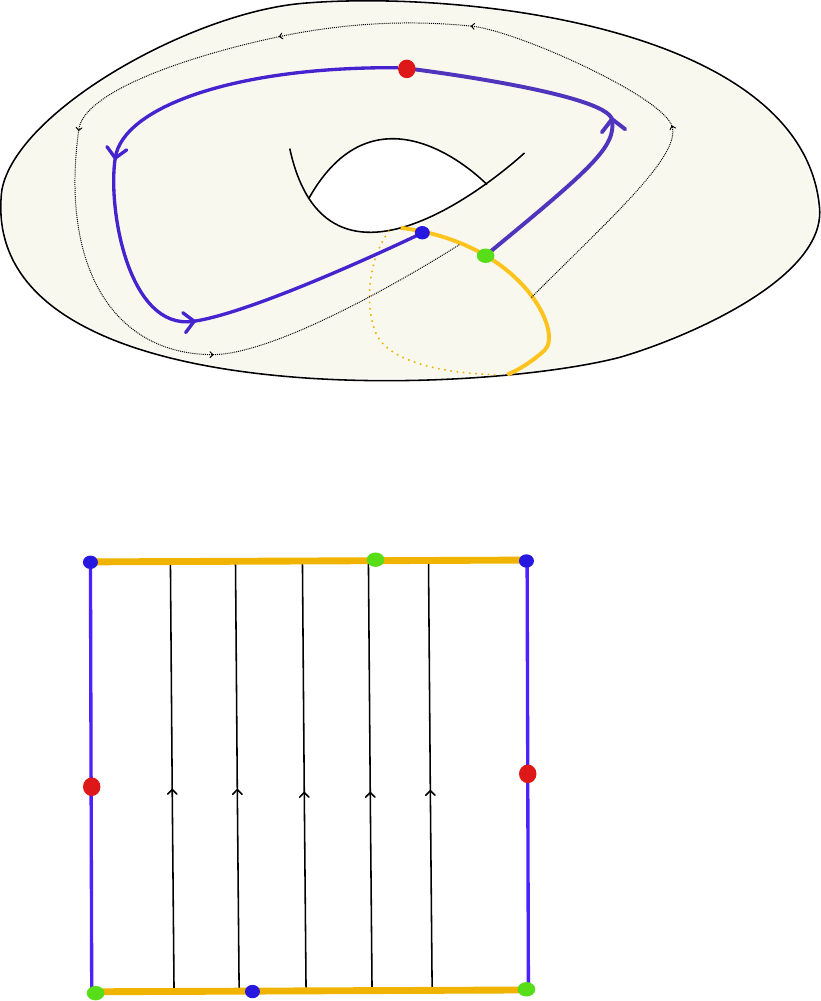}}%
    \put(0.66493118,0.80713626){\makebox(0,0)[lt]{\lineheight{1.25}\smash{\begin{tabular}[t]{l}$\gamma$\end{tabular}}}}%
    \put(0.50649742,1.15881141){\makebox(0,0)[lt]{\lineheight{1.25}\smash{\begin{tabular}[t]{l}$p$\end{tabular}}}}%
    \put(0.59171554,0.89355477){\makebox(0,0)[lt]{\lineheight{1.25}\smash{\begin{tabular}[t]{l}$q$\end{tabular}}}}%
  \end{picture}%
\endgroup%

	\end{center}
\caption{Moving from foliation to a map of the interval.}
\label{fig:nearp2}
\end{figure}

\vspace{2mm} \paragraph{\it Cocycle over $I$} Let $I$ be the interval $\gamma
\setminus \{q \}$. Because $I$ is contractible, the restriction of $E_{\rho}$ to
$I$ canonically identifies with $I \times \mathbb{R}^2$ thanks to the flat
connection. The first-return map of the lifted dynamics on $I \times \mathbb{R}^2$ has to be of the form 

$$ (x,v) \mapsto (T(x), M\cdot v) $$ on continuity intervals (as the flat
connection forces the second coordinate to be constant on continuity intervals),
where $M \in \mathrm{SL}(2, \mathbb{R})$. To summarise, if $\mathcal{F}$ is a
given measured foliation and $I$ an interval as above we have obtained:
\begin{itemize}
\item an interval exchange map on two intervals $T_0 : [0,\mu(I)] \longrightarrow [0,\mu(I)]$;
\item a pair of matrices $(A,B)$, each of which being associated to one of the two intervals of $T_0$.
\end{itemize}

\vspace{2mm} \paragraph{\it Family of cocycles} Given our circle $\gamma$, we
have defined for all measured foliations $\mathcal{F}$ a map $T$ which is a $2$-interval exchange transformation together with a locally constant cocycle parametrised by two matrices $(A,B)$. Recall that the set of measured foliations identifies with the circle $S^1$, and that the set of measured foliations transverse to $\gamma$ is an open set $V(\gamma) \subset S^1$, so that we have defined a map 

$$ \begin{array}{ccc}
V(\gamma) & \longrightarrow & \{2-\text{interval exchange transformations} \} \\
\theta & \longmapsto & T_{\theta} 
\end{array}$$ where $T_{\theta}$ is the $2$-IET corresponding to the first
return map of the foliation by straight lines of slopes directed by $\theta \in
\mathbb{S}^1$. This map is nothing but the map that associates the rotation that
is the first-return map of $\mathcal{F}_{\theta}$ onto $\gamma$. For instance, if $\gamma$ is a horizontal closed geodesic in $\mathrm{T} = \mathbb{R}^2/ \mathbb{Z}^2$, this map is nothing but $\theta \longmapsto (x \mapsto x + \tan \theta)$. In particular, this identification is an increasing map so it's locally injective.

\vspace{2mm} 

\vspace{2mm} \fbox{
\parbox{\textwidth}{ 
The upshot of this formal construction is that we have reduced the study of the cocycle defined by a representation $\rho : \mathrm{T}^* \longrightarrow \mathrm{SL}(2, \mathbb{R})$ over the space of all measured foliations on $\mathrm{T}$, to a family of piecewise constant cocycle over the space of $2$-interval exchange transformations. The one thing one has to be careful with is that one needs a finite number of such families of $2$-IETs to cover all measured foliations.
}}

\vspace{2mm}

\section{Cocycles over interval exchange transformations}\label{sec:coc_iet}

\subsection{Rotations as interval exchange maps}
\label{subsec:iets}
When narrowing down our study to the case of once-punctured tori, measured laminations become equivalent to linear foliations on the closed torus, the study of which reducing to that of rotations on the circle (by means of taking a first return map). 
\noindent The rotation of angle $\alpha \in S^1 = \mathbb{R}/ \mathbb{Z}$ is by definition the map
\[
	\begin{array}{ccccc}
		R_{\alpha} & := & S^1 & \longrightarrow & S^1 \\
			   && x & \longmapsto & x + \alpha
	\end{array}
\]

For technical reasons (which have a conceptual explanation which we save for later) it will be more convenient to think of $R_{\alpha}$ as the map that it induces on $[0,1]$;
by abuse of notation, we also denote
\[
	\begin{array}{cccccc}
		R_{\alpha} & := & [0,1] & \longrightarrow & [0,1] & \\
			   && x & \longmapsto & x + \alpha & \text{if} \ x \in [0,1- \alpha] \\
			   && x & \longmapsto & x + \alpha - 1 & \text{if} \ x \in (1- \alpha,1]
	\end{array}.
\]

\noindent We denote the continuity intervals of $R_{\alpha}$ by $I_a := [0, 1- \alpha]$ and $I_b := [1- \alpha, 1]$ respectively. We have thus realised $R_{\alpha}$ as an \textit{interval exchange transformation} (IET) on $2$-intervals. 
\vspace{2mm}

\subsection{Cocycles over an IET}

In this paragraph we consider $T : [0,1] \longrightarrow [0,1]$ an IET on two intervals. Consider a map $\rho : [0,1] \longrightarrow \mathrm{SL}(2, \mathbb{R})$ (note that we are using here the same notation for such a map as we are using for a representation $\rho : \pi_1 \mathbb{T}^* \longrightarrow \mathrm{SL}(2, \mathbb{R})$ which is a deliberate choice).

\vspace{2mm}

In the literature, one would often call such a map $\rho$ a \textit{cocycle}. The reason for that is that we are going to be interested in studying products of the form 
$$ \rho_n(x) =  \rho(T^{n-1}(x)) \cdot \rho(T^{n-2}(x)) \cdots \rho(T^{2}(x)) \cdot \rho(T(x)) \cdot \rho(x).$$
\noindent A more formal way of dealing with such products would be to introduce a fibred dynamical system 
$$ (x,v) \longmapsto (T(x), \rho(x) v)$$
and note that, under iteration, the second variable records the action of the product $\rho_n(x)$ on $\mathbb{R}^2$. Since there shall be no practical gain for us to follow this formal route, we will do away with it altogether. 

\begin{definition}
A cocycle (over $T$) is a map $\rho : [0,1] \longrightarrow \mathrm{SL}(2,\mathbb{R})$
\end{definition}

The only reason why we bother calling such a map a cocycle is because we want our reader's mind focused on the fact that we will be considering products of the form $ \rho_n(x) =  \rho(T^{n-1}(x)) \cdot \rho(T^{n-2}(x)) \cdots \rho(T^{2}(x)) \cdot \rho(T(x)) \cdot \rho(x).$ 

\vspace{2mm} Now in this article, we make the following standing assumption. When $T$ is an interval exchange transformation,

\begin{center}
\bf
All cocycles $\rho$ considered will be assumed to be constant on the continuity intervals of $T$. 
\end{center}

This way, a cocycle $\rho$ over an IET on $2$ intervals is completely
characterised by the value it takes over $I_a$ and over $I_b$. In other words,
such a cocycle $\rho$ is the same thing as the datum of a pair of matrices $(A,B) \in \mathrm{SL}(2, \mathbb{R})^2$.

\subsection{Lyapunov exponent}

Cocycles can be thought of as a generalization of classical ergodic theory; the cocycle $\rho : [0,1] \longrightarrow \mathrm{SL}(2, \mathrm{R})$ playing the role of an observable only taking its values in the group $\mathrm{SL}(2, \mathrm{R})$ instead of $\mathbb{R}$. Products

$$ \rho_n(x) =  \rho(T^{n-1}(x)) \cdot \rho(T^{n-2}(x)) \cdots \rho(T^{2}(x)) \cdot \rho(T(x)) \cdot \rho(x)$$ are thus seen to be natural generalization of Birkhoff sums. 

\vspace{2mm} \noindent As in classical ergodic theory, the first interesting numerical invariant to look at is the average growth of such  products. For $x \in [0,1]$, one introduces 

$$ \chi(x, \rho) := \limsup \frac{\log ||  \rho_n(x) || }{n}.$$ We call it the \textit{Lyapunov exponent} of $\rho$ at $x$.

\noindent We remind our reader of the Oseledets theorem which in our case is going to allow us to rid ourselves of the dependency in $x$.

\begin{theorem}[Oseledets]
Let $T : [0,1] \longrightarrow [0,1]$ be an ergodic $2$-IET (condition which is
equivalent to the associated rotation having irrational angle). Then there exists a number $\chi(\rho)$ such that for almost every $x \in [0,1]$, 
$$\chi(x, \rho) = \chi(\rho).$$
\end{theorem}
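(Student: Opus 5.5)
The plan is to derive the statement from Kingman's subadditive ergodic theorem combined with ergodicity of $T$. Set $f_n(x) := \log \|\rho_n(x)\|$. Because the cocycle is locally constant and takes only the two values $A,B$, each $f_n$ is bounded and measurable. We have $|f_n| \le n \max(\log\|A\|,\log\|B\|)$, and $f_n \ge 0$ since $\det \rho_n(x) = 1$. Submultiplicativity of the operator norm, together with the cocycle identity $\rho_{n+m}(x) = \rho_m(T^n x)\rho_n(x)$, gives
\[ f_{n+m}(x) \le f_n(x) + f_m(T^n x). \]
So $(f_n)$ is a subadditive sequence over $T$.

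Next we need $T$ to be measure-preserving and ergodic. As an IET, $T$ preserves Lebesgue measure on $[0,1]$. Modulo the identification of the endpoints $0$ and $1$, it is conjugate to the circle rotation $R_\alpha$, and this identification only changes a null set. The rotation $R_\alpha$ is ergodic for irrational $\alpha$. This is the classical Fourier argument: an invariant $L^2$ function $g$ satisfies $\hat g(k)(e^{2i\pi k\alpha}-1)=0$, which forces $\hat g(k)=0$ for $k \ne 0$. We take this as the meaning of the hypothesis in the statement.

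Kingman's theorem then says that $f_n(x)/n$ converges almost everywhere, and in $L^1$, to a $T$-invariant function $\chi$. The function $\chi$ satisfies
\[ \int \chi = \inf_n \tfrac1n \int f_n = \lim_n \tfrac1n \int f_n. \]
By ergodicity, $\chi$ is almost everywhere equal to a constant $\chi(\rho)$. The $\limsup$ in the definition of $\chi(x,\rho)$ is then an honest limit almost everywhere, and it equals this constant. This yields the statement, with $\chi(\rho)$ given by the integral formula above.

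The only point needing care is the matching of $T$ with the rotation, namely endpoint conventions and the fact that $T$ may fail to be injective at a single point. These affect only null sets, so they are harmless. There is no real obstacle beyond invoking Kingman's theorem, which gives the result directly; the full Oseledets splitting is not needed for this statement.
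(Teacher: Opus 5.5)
Your argument is correct. The paper gives no proof of this statement: it is simply recalled as ``the Oseledets theorem'', so the multiplicative ergodic theorem is used as a black box.

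Your route through Kingman's subadditive ergodic theorem (in this setting, the Furstenberg--Kesten theorem) is the standard derivation of the top exponent. It is also the more economical choice, since the statement only concerns $\limsup \frac{1}{n}\log\|\rho_n(x)\|$. The extra content of Oseledets' theorem is never used in the paper; that extra content is the measurable invariant splitting of $\mathbb{R}^2$ into directions with exponents $\pm\chi(\rho)$ when $\chi(\rho)>0$.

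Your approach additionally gives the formula $\chi(\rho)=\lim_n \frac{1}{n}\int \log\|\rho_n\| = \inf_n \frac{1}{n}\int\log\|\rho_n\|$. This is the discrete analogue of the averaged definition of $\chi(\rho,\mathcal{F})$ in Section~\ref{sec:def}, which makes it the convenient form for the comparison of the two exponents asserted in Section~\ref{sec:coc_iet}.

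Two small remarks. First, the subadditivity step needs a submultiplicative norm, so you should explicitly fix an operator norm. This is harmless, since all norms on $2\times 2$ matrices are equivalent and $\chi(x,\rho)$ does not depend on the choice. Second, ergodicity is already part of the hypothesis, so your Fourier argument is only needed for the parenthetical equivalence with irrationality of the angle, not for the theorem itself.
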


\subsection{Back to measured foliations}

Recall that to a representation $\rho$, a measured foliation $\mathcal{F}$ and a well-chosen transverse interval $I$, we had associated 

\begin{enumerate}
\item $T$ a $2$-IET obtained as a first-return map of $\mathcal{F}$ on $I$;
\item a particular pair of matrices $(A,B)$ that generated the image of $\rho$.
\end{enumerate}

By construction, we have:
\begin{proposition}
The Lyapunov exponent of $\rho$ with respect to the measured foliation $\mathcal{F}$ is equal to the Lyapunov exponent of the piecewise constant cocycle associated to the pair $(A,B)$ over $T$.

\end{proposition}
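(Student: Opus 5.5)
The plan is to compare the two exponents through the suspension structure: the flow $(\varphi_t)$ on $\mathrm{T}^*$, restricted to the complement of the single leaf through $p$, is a suspension of the first-return map $T$ on $I=\gamma\setminus\{q\}$. Its roof function is the return time $\tau : I\to\R_{>0}$, which is constant on each of the two continuity intervals $I_a,I_b$. Over this suspension, parallel transport in $E_\rho$ is the suspension of the locally constant cocycle $(A,B)$. Since $E_\rho|_I$ is trivialised by the flat connection, we have
$$\phi_{\tau_n(x)}(x)=\rho_n(x), \qquad \tau_n(x):=\sum_{k=0}^{n-1}\tau(T^k x),$$
as linear maps between the fibres at $x$ and $T^n x$.

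The normalisation has to be fixed before an equality rather than a proportionality can come out, so this comes first. By the remark following the definition of the spectrum, the parametrisation of $\mathcal{F}$ is chosen so that the transverse measure, coupled with the flow time, gives total mass one. Concretely, I take the flow time normalised so that the total mass equals the transverse measure of the section: $\int_I\tau\,d\mu=\mathrm{vol}=\mu(I)$, and then identify $I$ with $[0,1]$ by rescaling so that $\mu(I)=1$, as is done implicitly when $T$ is viewed as a $2$-IET on $[0,1]$. With this normalisation the mean return time is $\bar\tau=\int_I \tau\,d\mu/\mu(I)=1$; this is Kac's lemma, here just Fubini on the flow box decomposition of $\mathrm{T}$ over $I$. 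I regard this normalisation as part of the construction in the preceding subsection, and it is precisely what "by construction" refers to.

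The key steps, in order, are as follows.

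(i) I bound the effect of fibre metrics. The metric $g$ on $E_\rho$ extends to the compact torus $\mathrm{T}$, and the trivialisation over $I$ is bounded with respect to $g$ on the compact closure of the flow boxes. Since $\tau$ takes only two values, it is bounded above and below. Hence $\|\phi_s(y)\|$ is uniformly bounded for $0\le s\le \max\tau$ and $y$ in a flow box. It follows that $\bigl|\log\|\phi_t(y)\|-\log\|\rho_{n}(x)\|\bigr|\le C$ whenever $y=\varphi_s(x)$ with $x\in I$, $0\le s<\tau(x)$, and $n=n(t,y)$ is the number of returns to $I$ in time $t$.

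(ii) For irrational rotation (minimal $T$), I apply unique ergodicity of $T$ and Birkhoff's theorem to get $\tau_n(x)/n\to\bar\tau=1$ uniformly in $x$. Consequently $n(t,y)/t\to 1$ uniformly in $y$.

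(iii) By Oseledets, $\frac1n\log\|\rho_n(x)\|\to\chi(\rho)$ for a.e. $x$, and also in $L^1$ since $\log\|\rho\|$ is bounded. Combining this with (i) and (ii), $\frac1t\log\|\phi_t(y)\|\to\chi(\rho)$ for a.e. $y$, with the integrands dominated by $\max(\log\|A\|,\log\|B\|)\cdot\sup n(t,\cdot)/t$, which is bounded. Dominated convergence, after integrating over the flow boxes (where $dy=d\mu\,ds$), then gives $\chi(\rho,\mathcal{F})=\chi(\rho)$.

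(iv) In the rational case, $T$ is periodic and both exponents are computed from the same finite product along closed leaves, so the same comparison applies directly.

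The main obstacle is step (ii) together with the bookkeeping of the normalisation. Without $\bar\tau=1$ the argument only yields $\chi(\rho,\mathcal{F})=\chi(\rho)/\bar\tau$. So I will verify carefully, via the flow-box Fubini computation, that the volume-one convention on $\mathrm{T}$ and the rescaling of $I$ to unit length together force the mean return time to be $1$. The other steps are routine.
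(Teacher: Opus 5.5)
Your steps (i)--(iii) flesh out what the paper dismisses with ``by construction''. You treat the flow as a suspension of the first-return map, bound the distortion between flow time and number of returns, use uniform convergence of return times, and finish with dominated convergence. Together these correctly yield $\chi(\rho,\mathcal{F}) = \chi(\rho)/\bar\tau$, where $\bar\tau$ is the mean return time to $I$.

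The gap is the step asserting that the volume-one convention, together with rescaling $I$ to $[0,1]$, forces $\bar\tau = 1$. It does not. $\bar\tau$ is the average of $\tau$ against the normalised invariant measure $\mu/\mu(I)$, so it is intrinsic to the flow and to $\gamma$. Rescaling $I$ (or $\mu$) changes neither the return times nor the per-step exponent $\chi(\rho)$.

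Your own Fubini computation shows this. With $\mu$ the flux measure of the unit-speed flow, $\int_I \tau\, d\mu = \mathrm{vol}(\mathrm{T}) = 1$, hence $\bar\tau = 1/\mu(I)$, and $\mu(I)$ is fixed by the geometry rather than being a free parameter. Replacing $\mu$ by $\mu/\mu(I)$ destroys the identity $\int_I\tau\,d\mu = \mathrm{vol}$. Restoring it means reparametrising the flow time, which multiplies $\chi(\rho,\mathcal{F})$ by $\bar\tau$, i.e.\ changes the definition. Concretely, take $\gamma$ the horizontal closed geodesic and the unit-speed flow in direction $(\cos\theta,\sin\theta)$. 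Then $\tau \equiv 1/\sin\theta$, so your argument gives $\chi(\rho,\mathcal{F}_\theta) = \sin\theta\,\chi(\rho)$, not equality.

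So, under the paper's stated conventions (geodesic flow of one fixed volume-one flat metric), what is true is $\chi(\rho,\mathcal{F}) = \mu(I)\,\chi(\rho)$, a positive multiple. This is also what your argument actually proves, and the paper's ``by construction'' glosses over the same point. It is all that is used afterwards, since only the vanishing locus matters. The paper itself notes that a reparametrisation multiplies the exponent by a positive constant.

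You should either state the result in that form, or explicitly adopt the convention that time is normalised to have mean return time $1$ (equivalently, counted in returns to $I$). That convention cannot be derived from the volume-one normalisation.

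A minor inaccuracy: $\tau$ is constant on the continuity intervals only when $\gamma$ is a straight closed geodesic, and then it is globally constant. For a general transverse circle it is merely continuous on $\gamma$. That continuity is what (i) and the uniform ergodic theorem in (ii) actually require.
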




\section{Renormalisation of cocycles}\label{sec:renormalisation}

\subsection{Rauzy induction} 

In this paragraph we introduce an algorithm which takes as input an $T_0$ IET and outputs a sequence of IETs $T_0, T_1, \cdots, T_n, \cdots $. The two main features of this sequence are the following:

\begin{itemize}
\item the $T_n$s have the same number of continuity intervals as $T_0$;
\item $T_n$ is a (normalised) first-return map of $T_0$ onto an interval of the form $[0,x_n]$.
\end{itemize}

We restrict ourselves to the case of $2$-IETs, for an introduction to the theory for arbitrary IETs see \cite{Yoccoz}.

\vspace{2mm}

\paragraph*{\bf Elementary step of the induction.} Let $R_{\alpha} = T_0 : [0,1] \longrightarrow [0,1]$ be a $2$-IET with respective continuity intervals $I_a = [0,1 - \alpha]$ and $I_b = [1- \alpha, 1]$. 

\begin{definition} 
For such a $2$-IET $T_0$, we define a new $2$-IET $\mathcal{R}(T_0)$ the following way

\begin{itemize}
\item If $|I_a| < |I_b|$, define 

$$ \mathcal{R}(T_0) := \ \text{first return of} \ T_0  \ \text{on the interval} \ [0,\alpha].$$

\item If $|I_a| > |I_b|$, define 

$$ \mathcal{R}(T_0) := \ \text{first return of} \ T_0  \ \text{on the interval} \ [0,1-\alpha].$$ 
\end{itemize}

\end{definition}

In the first case, we say that \textit{top wins} (and \textit{bottom loses})   whereas in the second case, we say that \textit{bottom wins} (and that \textit{top loses}). In each case, the largest interval is called the \textit{winner} and the smallest is called the \textit{loser}. This terminology is inspired by the following picture.

\begin{figure}[!h]
	\begin{center}
		\def\svgwidth{0.4 \columnwidth}
			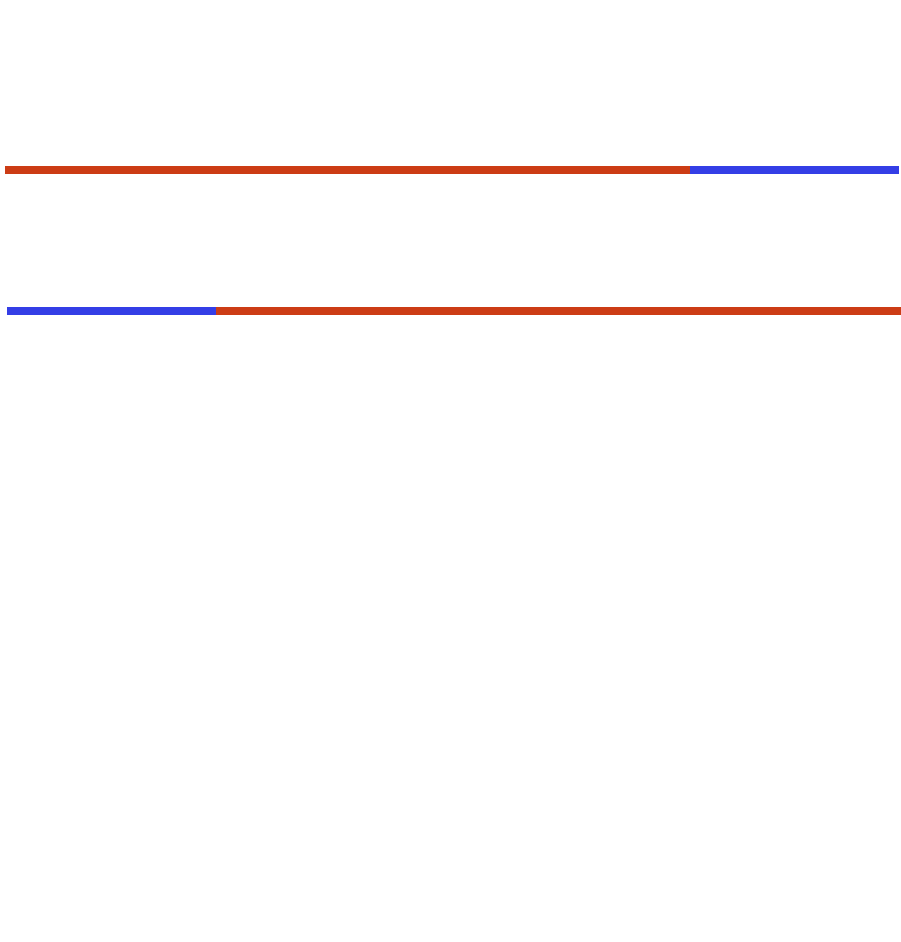
	\end{center}
\caption{Examples of configurations where bottom or top wins}
\label{fig:nearp}
\end{figure}

\vspace{2mm}

\paragraph*{\bf The algorithm per se} Once we are equipped with the elementary step of the induction, given  $T_0$ a $2$-IET (which again is the same thing as a rotation $R_{\alpha}$ for some $\alpha \in S^1$), we define two sequences. 

\vspace{2mm}

\begin{itemize}
\item a sequence $(T_n)_{n \in \mathbb{N}}$ where $T_n$ is defined recursively by $T_{n+1} = \mathcal{R}(T_n)$ (where $\mathcal{R}(T_n)$ has been normalised so that it is a map from $[0,1]$ to itself). We call the sequence $(T_n)$ the \textit{renormalizations} of $T_0$ and the $T_n$ the \textit{$n^{th}$ renormalization} of $T_0$.
\item a sequence $\gamma =(\gamma_n)_{n \in \mathbb{N}}$ of letters in the alphabet $\{t,b\}$, where $\gamma_n$ is equal to \textit{t} or \textit{b} depending on whether top or bottom wins for $T_n$.
\end{itemize}

We are going to see in the next paragraph that the datum of the sequence $\gamma$ is more or less equivalent to the continued fraction expansion of $\alpha$.

\subsection{Renormalisation of cocycles}

In this paragraph we enhance the Rauzy induction to keep track of the evolution of a cocycle $\rho$ under iterations of $T_0$. Since an element $T_n$ of the sequence of renormalizations of $T_0$ is a first-return map of $T_0$, we are going to want to know what has happened to the cocycle $\rho$ under the iterations of $T_0$ defining $T_n$. By that we mean keeping track of the product $\rho(T^{k_n-1}(x)) \cdots \rho(T^{2}(x)) \rho(T(x)) \rho(x)$ where 

\begin{itemize}

\item $x$ is in the interval $[0,x_n]$ such that $T_n$ is the first return on $[0,x_n]$;

\item $k_n$ is the integer such that $T_n(x) = T_0^{k_n}(x)$.

\end{itemize}

Let $T_0$ be a $2$-IET, $(T_n)$ its sequence of renormalization and $\gamma(T_0)$ its Rauzy path. Consider a cocycle $\rho_0 : [0,1] \longrightarrow \mathrm{SL}(2,\mathbb{R})$ which we recall we have assumed to be constant on continuity intervals of $T_0$, so that $(\rho_0)_{|I_a} \equiv  A_0$ and $(\rho_0)_{|I_b} \equiv B_0$ for a pair of matrices $(A_0,B_0) \in \mathrm{SL}(2,\mathbb{R})^2$.

\begin{definition}
Let $T_0$, $\rho_0 = (A_0,B_0)$, $(T_n)$ and $\gamma(T_0)$ be as above. We define the sequence of renormalized cocycles $(\rho_n = (A_n,B_n))_{n \in \mathbb{N}}$ inductively the following way

\begin{itemize}
	\item if $\gamma_n = b$, $(A_{n+1}, B_{n+1}) = \tau_1 (A_n, B_n) := (A_n, B_n \cdot A_n)$.
	\item if $\gamma_n = t$, $(A_{n+1}, B_{n+1}) = \tau_2 (A_n, B_n) := (B_n \cdot A_n, B_n)$.
\end{itemize}

\end{definition}

\subsection{Acceleration and continued fraction expansions}

We now introduce an acceleration of the induction, which we will use later on in paragraph \ref{subsec:zeroLyap}. This material is classical and so we don't give any proofs for most of the statements, which doesn't mean that they are completely trivial (although none are deep theorems and can be derived from the definitions).

\vspace{2mm} Consider a $2$-interval exchange transformation $T$. We define 

$$ \mathcal{Z}(T) := \mathcal{R}^{b(T)}(T) $$ where $b(T)$ is the smallest integer such that $\gamma_{b(T)}$ is not equal to $\gamma_1$. In other words, we group together all steps of renormalization where the same interval wins and add one for good measure. We call $\mathcal{Z}(T)$ the \textit{accelerated} renormalization of $T$. 
\vspace{2mm}

\paragraph*{\it Continued fractions} We briefly recall here the concept of the continued fraction expansion of a real number. Recall that for any irrational number $\alpha \in [0,1)$ one can associate a sequence $a_1, a_2, \cdots, a_n$ of positive integers such that 

$$ \alpha = \frac{1}{a_1 + \frac{1}{a_2 + \frac{1}{a_3 + \cdots}}}.$$ We will use the traditional notation $(q_n)$ for the sequence defined inductively by $q_0 = 1, q_1 = a_1$ and 

$$ q_{n} = a_n q_{n-1} + q_{n-2}.$$ We will be using the following fact later. 

\begin{lemma}
For all $\alpha$ irrational and for all $n$, $q_n \geq \sqrt{2}^n$.
\end{lemma}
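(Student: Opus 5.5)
The plan is to compare $(q_n)$ with a Fibonacci-type sequence, using only the recursion $q_n = a_n q_{n-1} + q_{n-2}$ and the fact that every $a_n \geq 1$.

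First, I would record the two basic facts. All $q_n$ are positive, which follows by induction from $q_0 = 1$, $q_1 = a_1 \geq 1$ and the recursion. The sequence is non-decreasing from index $1$ on: for $n \geq 2$, $q_n \geq q_{n-1} + q_{n-2} > q_{n-1}$.

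The key inequality then comes from applying the recursion twice and using monotonicity. For $n \geq 3$,
\[ q_n \geq q_{n-1} + q_{n-2} \geq 2 q_{n-2}. \]
For $n = 2$ the bound $q_2 = a_2 a_1 + 1 \geq 2 = 2q_0$ holds directly. So the claim reduces to checking the two parity classes from suitable initial terms.

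For even $n$, iterating the doubling down to $q_0 = 1$ gives
\[ q_n \geq 2^{n/2} q_0 = \sqrt{2}^{\,n}. \]

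For odd $n$ the natural base case is $n = 1$, and here I expect the only real obstacle. The bound $q_1 = a_1 \geq \sqrt{2}$ holds exactly when $a_1 \geq 2$, so the case $a_1 = 1$ must be handled separately. (In that case the statement is literally false at $n = 1$, since $q_1 = 1 < \sqrt{2}$. It should be read as holding for all $n \neq 1$, or with that trivial exception, which is harmless for the later use of the lemma as a growth estimate.)

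To get around this, I would start the odd induction at $n = 3$ instead. Using $q_2 \geq 2$ and $q_1 \geq 1$,
\[ q_3 \geq q_2 + q_1 \geq 3 > 2\sqrt{2} = \sqrt{2}^{\,3}. \]
Then for odd $n \geq 3$,
\[ q_n \geq 2^{(n-3)/2} q_3 \geq \sqrt{2}^{\,n-3} \cdot \sqrt{2}^{\,3} = \sqrt{2}^{\,n}. \]
When $a_1 \geq 2$ the case $n = 1$ holds directly, which completes the argument.
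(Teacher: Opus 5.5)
Your proof is correct and uses the same argument as the paper, which simply invokes $q_n \geq 2q_{n-2}$ and iterates. Your extra care at the odd base case is warranted: iterating the doubling from $q_1$ only gives $q_{2k+1} \geq 2^k a_1$, so the stated bound really fails at $n=1$ when $a_1 = 1$, and restarting from $q_3 \geq 3 > 2\sqrt{2}$ recovers it for every odd $n \geq 3$.
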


\begin{proof}

This is a simple consequence of the formula $$ q_{n} = a_n q_{n-1} + q_{n-2}$$ which implies that $q_n \geq 2 q_{n-2}$ for all $n \in \mathbb{N}$.

\end{proof}

\paragraph*{\it Link with the acceleration} We explain the link between continued fractions and renormalization. Let $T_{\alpha}$ be the $2$-IET representing the rotation of angle $\alpha$. We are going to assume that for $T_{\alpha}$ bottom wins. In this case, we have 

$$ a_1 = b(T_{\alpha}) $$ \textit{i.e.} the first coefficient in the continued
fraction expansion of $\alpha$ is the number of times the standard induction has to be accelerated to get $\mathcal{Z}(T)$. We then have 

$$ a_n = b(\mathcal{Z}^n(T_{\alpha})).$$ 

\paragraph*{\it First-return time} For all $n \in \mathbb{N}$, we have that $\mathcal{Z}^n(T_{\alpha})$ is the first-return of $T_{\alpha}$ on an interval $I_{n} = [0,\beta_n]$. $I_n$ itself splits into two intervals of continuity for $\mathcal{Z}^n(T_{\alpha})$. We have the following.

\begin{proposition}
\label{prop:returntimes}
For all $n \in \mathbb{N}$, $\mathcal{Z}^n(T_{\alpha})$ is the restriction of $T_{\alpha}^{q_n}$ to one of its intervals of continuity and of $T_{\alpha}^{q_n + q_{n-1}}$ to the other.

\end{proposition}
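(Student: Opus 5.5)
We prove the statement by induction on $n$, keeping track of the return times of \emph{both} continuity intervals at once. After each renormalization step, record the pair of return times $(r_a^{(k)}, r_b^{(k)})$. Here $r_a^{(k)}$ is the integer $j$ such that $\mathcal{R}^k(T_\alpha)$ coincides with $T_\alpha^{j}$ on its first continuity interval, and $r_b^{(k)}$ is the analogous integer for the second interval.

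The key observation is the elementary step. Suppose $T$ is the first return of $T_\alpha$ to some interval, with return times $(r_a,r_b)$ on its two continuity intervals.

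\begin{itemize}
\item If bottom wins, the new first-return map on $[0,1-\alpha]$ (in $T$'s coordinates) has two continuity intervals. Points landing in $I_b$ go $I_b \to I_a$, so their return time is $r_b + r_a$. The other points return directly, with time $r_a$. The pair therefore transforms as $(r_a,r_b)\mapsto(r_a, r_a+r_b)$.
\item If top wins, symmetrically the pair transforms as $(r_a,r_b)\mapsto(r_a+r_b,r_b)$.
\end{itemize}

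This is exactly the bookkeeping of the cocycle update $\tau_1,\tau_2$, applied with the abelian cocycle $1\in(\mathbb{Z},+)$ in place of $(A,B)$. So the return times are the "abelianization" of the renormalized cocycle. This gives a clean way to state the step: the return times of $\mathcal{R}^k(T_\alpha)$ are the Birkhoff sums of the constant function $1$, updated by the same Rauzy matrices.

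Next, accelerate. Starting from $(1,1)$ at $T_\alpha$, assume bottom wins first. The first acceleration applies $\tau_1$ exactly $a_1=b(T_\alpha)$ times, giving $(1, 1+a_1)=(q_0, q_1+q_0)$. The next block, where top wins $a_2$ times, gives $(q_0 + a_2(q_1+q_0),\ q_1+q_0)$. We then regroup using $q_2=a_2q_1+q_0$.

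The right invariant to carry is that after $n$ accelerations the pair is $\{q_n,\ q_n+q_{n-1}\}$ (unordered, with the convention $q_{-1}=0$ if needed). The inductive step then reads as follows. The losing interval has return time $q_n$ (or $q_n+q_{n-1}$, depending on parity). Adding the winner's time $a_{n+1}$ times and using $q_{n+1}=a_{n+1}q_n+q_{n-1}$ produces $q_{n+1}$ and $q_{n+1}+q_n$. The winner retains its time, which must be identified with the appropriate element of the new pair.

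The main obstacle is precisely this off-by-one bookkeeping. One has to pin down the convention in the definition of $b(T)$ ("add one for good measure"), which means an accelerated step contains the $a_n$ same-letter steps plus one opposite step. One also has to make sure the indices $q_n$ vs.\ $q_n+q_{n-1}$ match, rather than, say, $q_n$ and $q_{n-1}$.

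To resolve this, I would first check small cases explicitly, e.g.\ $\alpha$ with $a_1=1,a_2=1$, computing actual return times of a golden-rotation orbit. I would then fix the exact form of the invariant, possibly shifting indices, and run the induction above with the recursion $q_{n}=a_nq_{n-1}+q_{n-2}$. The remaining content is the classical fact that the winner/loser alternation under $\mathcal{Z}$ follows the continued fraction digits, stated just before the proposition ($a_n=b(\mathcal{Z}^n(T_\alpha))$), which we take as given.
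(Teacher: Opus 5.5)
\textbf{There is a genuine gap: the off-by-one bookkeeping, which is the whole content of the proposition, is left unresolved, and the computations you write down are wrong.}

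What is right: in each Rauzy step the winner keeps its return time and the loser's becomes $r_a+r_b$. This is the abelianized $\tau_1,\tau_2$ acting on $(1,1)$, and $\{q_n,q_n+q_{n-1}\}$ is the right target.

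The first error is the length of the first block. From the lengths $(|I_a|,|I_b|)=(1-\alpha,\alpha)$ with $\alpha<1/2$, bottom wins exactly $\lfloor (1-\alpha)/\alpha\rfloor=a_1-1$ times, not $a_1$ times. The later blocks have lengths $a_2,a_3,\dots$; this is Euclid's algorithm on $(1-\alpha)/\alpha=[a_1-1;a_2,a_3,\dots]$. That is what ``add one for good measure'' encodes: $b(T_\alpha)=a_1$ counts the $a_1-1$ bottom steps plus one top step. In general $\mathcal{Z}^n=\mathcal{R}^{a_1+\cdots+a_n}$ (the times $k_n$ of Section~\ref{sec:back_to_rep}), which stops one step \emph{into} the $(n+1)$-st block. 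So an accelerated step is $a_{n+1}$ Rauzy steps in total, not ``$a_n$ same-letter steps plus one''.

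Consequences for what you wrote:
\begin{itemize}
\item Your pairs $(q_0,q_1+q_0)$ and $(q_0+a_2(q_1+q_0),q_1+q_0)$ are not of the form $\{q_n,q_n+q_{n-1}\}$, and no regrouping makes them so. For $a_1=2$, $a_2=1$ you get $\{3,4\}$, whereas the first return of $R_\alpha$ to the corresponding interval has times $\{3,5\}$.
\item Your inductive step cannot close. If the block's winner keeps its time $q_n$ or $q_n+q_{n-1}$ to the end, that time is neither $q_{n+1}$ nor $q_{n+1}+q_n$ unless $a_{n+1}=1$.
\end{itemize}

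\textbf{How to repair it.} Carry an \emph{ordered} invariant matched to this offset: at $\mathcal{Z}^n$, the interval that won the last Rauzy step has return time $q_n$, and the one that lost it has $q_n+q_{n-1}$. For $n=0$, put $q_{-1}=0$ and regard $I_a$, the winner of the first block, as the last winner.

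Going from $\mathcal{Z}^n$ to $\mathcal{Z}^{n+1}$ then consists of two stages:
\begin{itemize}
\item $a_{n+1}-1$ further steps with the same winner, turning the loser's time into $q_n+q_{n-1}+(a_{n+1}-1)q_n=q_{n+1}$;
\item one step won by the other interval. Here the former winner loses and its time becomes $q_n+q_{n+1}$, while the new winner keeps $q_{n+1}$.
\end{itemize}
This is exactly the invariant at $n+1$. Note that at the end of an accelerated step, the interval that has kept its time is the winner of the final, opposite step, not the winner of the block.

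The base case fits the same scheme precisely because the first block is one step short: $(1,1)\mapsto(1,a_1)=(q_0,q_1)\mapsto(q_1+q_0,q_1)$.

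Finally, use $\mathcal{Z}^n=\mathcal{R}^{a_1+\cdots+a_n}$ as your working definition, rather than the literal ``first index with a different letter'' rule. When some $a_{n+1}=1$ with $n\geq 1$, no steps of block $n+1$ remain at $\mathcal{Z}^n$. The literal rule then jumps straight to $\mathcal{R}^{a_1+\cdots+a_{n+2}}$, merging two accelerated steps and shifting all later indices.
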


%
%
%
%
%
%
%
%
%
%
%
%
%
%

\subsection{Link with the action of the Mapping Class Group}\label{sub:traj_mcg}

We now explain that the sequence of cocycles
$ (A_n, B_n) $
generated from $ (A,B) $
by the renormalization of the IET
corresponds to applying a certain sequence
of element of the mapping class group
to the initial representation $ \rho $.

This follows simply from the remark that
if $ A = \rho(a) $ and $ B = \rho(b) $
are the image of the standard generators of the torus,
then the transformation
$ (A,B) \mapsto (A, BA) $
corresponds
to applying a Dehn twist
along $ a $ to $ \rho $
(and similarly
for
$ (A,B) \mapsto (AB, B) $
and
$ b $).
Hence, to every
lamination
$ \lambda $
we associate
a sequence
$ (\varphi_n) $
in the mapping class group,
with the property that
$ \varphi_{n+1} \varphi_n^{-1} $
is a Dehn twist along $ a $ or $ b $.

\section{The spectrum is a Cantor set}\label{sec:cantor}

Let
$ c = (A,B) $
be a cocycle.
Recall that the spectrum
of
$ c $
is the set of IET
$ T $
such that
the Lyapunov exponent
$ \chi(c, T) $
is zero.
In this section,
we prove our main result
for cocycles:

\begin{theorem}\label{thm:main_spectrum}
	The spectrum of a generic cocycle
	is either empty,
	or a Cantor set.
\end{theorem}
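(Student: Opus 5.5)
We work with the cocycle $c=(A,B)$ viewed as a representation of the free group $\langle a,b\rangle$. We parametrise the 2-IETs by the rotation angle $\alpha$, equivalently by the Rauzy path $\gamma(T)\in\{t,b\}^{\mathbb N}$. Renormalisation replaces $(A,B)$ by $(A_n,B_n)$, obtained through the substitutions $\tau_1,\tau_2$, which correspond to Dehn twists as in \S\ref{sub:traj_mcg}. We take "generic" to mean the standing hypotheses of the introduction: $\langle A,B\rangle$ is free on $A,B$, contains no parabolic elements, and $\tr[A,B]>2$. We must show that the set $\spe(c)=\{T:\chi(c,T)=0\}$ is closed, has no isolated points, and has empty interior. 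Totally disconnected follows from the last property, since we work in a one-dimensional parameter space.

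\textbf{Key dichotomy.} For each finite Rauzy word $w$, consider the renormalised pair $(A_w,B_w)$. We prove a lemma: if some renormalised pair has $A_w,B_w$ both hyperbolic with axes positioned so that the pair is "Schottky-like", then every IET whose Rauzy path begins with $w$ has positive exponent, and the cocycle is uniformly hyperbolic there. Here Schottky-like means there is a cone field in $\mathbb{RP}^1$, or a half-plane configuration in $\mathbb H^2$, that is strictly invariant under both $A_w$ and $B_w$, as in the elementary hyperbolic-geometry lemmas of Section~\ref{sec:hyp_geom}. This is the standard cone-criterion argument. Since $\tau_1,\tau_2$ produce only positive words in $A_w,B_w$, the invariant cone is preserved by all further renormalisations. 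Uniform expansion then grows like $\lambda^{q_n}$ along return times, by Proposition~\ref{prop:returntimes}, and $q_n\ge\sqrt2^n$ is not even needed. The set $U$ of such $T$ is therefore a union of cylinders, which are open intervals in $\alpha$, and on $U$ the exponent is positive. Conversely, we show that if no prefix of $\gamma(T)$ reaches a Schottky-like pair, then the pairs $(A_n,B_n)$ remain in a compact set up to conjugacy. The reason is that $\tr[A_n,B_n]=\tr[A,B]=c$ is invariant, and the non-Schottky configurations with fixed commutator trace $c>2$ and no parabolics form a region where $|\tr A_n|$ and $|\tr B_n|$ are bounded. Using the first-return description, $\log\|c_{q_n}(x)\|$ is then bounded along the times $q_n$, which gives $\chi=0$. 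Hence $\spe(c)$ equals the complement of $U$ and is closed.

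\textbf{Empty interior and no isolated points.} Take a cylinder $[w]$ whose renormalised pair $(A_w,B_w)$ is not Schottky-like. We show that a bounded number of additional steps, namely a long run $\tau_1^k$ or $\tau_2^k$, produces a Schottky-like pair. The argument runs as follows. If $A_w$ is elliptic, then by non-discreteness and the absence of parabolics some power $B_wA_w^k$ becomes hyperbolic with large trace. If $A_w$ is hyperbolic, then $B_wA_w^k$ has an axis converging to that of $A_w$, and the condition $c>2$ ensures that the axes are disjoint, which gives a ping-pong configuration. Freeness excludes degenerate coincidences. Consequently every cylinder contains a subcylinder lying in $U$, so the spectrum has empty interior.

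For perfectness, we show that whenever a cylinder $[w]$ meets the spectrum, it contains at least two disjoint subcylinders that also meet the spectrum. To do this, we use the same local analysis to exhibit two different continuations, for instance $t$ versus $b$ after an appropriate run length, that both remain non-Schottky. The bounded region is invariant under some continuation, so a compactness argument produces infinite non-Schottky paths. Any spectral point is then a limit of other spectral points. Finally, if the spectrum is nonempty, closedness, empty interior and perfectness together give a Cantor set.

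\textbf{Main obstacle.} The hardest step is the combinatorial case analysis on non-Schottky pairs. It has two parts: showing that they form a compact set modulo conjugacy, and showing that from each such pair at least two continuations stay non-Schottky while some continuation escapes. The genericity hypotheses, freeness and the absence of parabolics, are used exactly here to rule out boundary cases such as $\tr A_n=\pm2$. I would first attack this via explicit trace inequalities for $\tr(BA^k)$ as a function of $k$, combined with the geometry of axes from Section~\ref{sec:hyp_geom}.
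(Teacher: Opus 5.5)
Your architecture is the paper's. The open set $U$ consists of the IETs whose renormalized pair eventually becomes $(\Hyp,\Hyp)^+$; these are your Schottky-like pairs, which are absorbing because positive words preserve the common cone. On $U$ you use the cone criterion, on the complement Proposition~\ref{prop:bounded_renom}, and then the closed / empty interior / no isolated points criterion.

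The step that fails is the identification of the complement of $U$ with the spectrum. You justify boundedness by claiming that non-Schottky pairs with $\tr[A,B]=c>2$ form a bounded region of $\cv_c$, and you list proving this as your main obstacle. The claim is false. View the Markov equation as a quadratic in $z=\tr AB$: it has a real solution iff $(x^2-4)(y^2-4)+4(c-2)\ge 0$. With $y=\tr B=2\cos\theta$, this reads $x^2\le 4+(c-2)/\sin^2\theta$. So for every $c>2$ there are $(\Hyp,\Ell)$ pairs with $|\tr A|$ arbitrarily large, and then $|\tr AB|\approx|x\cos\theta|$ is large too. Unbounded $(\Hyp,\Hyp)^-$ pairs exist as well. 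Invariance of the commutator trace only keeps you on the non-compact surface $\cv_c$, so no property of individual pairs can give the bound.

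The missing idea is that boundedness comes from the trajectory, using both the past and the future of each pair; this is the paper's Lemma~\ref{lem:peage_K}. Let $K$ be the set of pairs with at least two of $A,B,AB$ elliptic; it is compact by the Markov equation. A path avoiding $(\Hyp,\Hyp)^+$ leaves $(\Hyp,\Hyp)^-$ after finitely many blocks and never returns (translation lengths decrease as in Lemma~\ref{lem:prodHH}). So it eventually lives in $(\Ell,\Ell)\cup(\Hyp,\Ell)\cup(\Ell,\Hyp)$, and $(\Ell,\Ell)\subset K$.

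\begin{itemize}
\item A $(\Hyp,\Ell)$ pair followed by a $\tau_1^N$ block must have $N<N(A,B)$ by Proposition~\ref{prop:from_HE}. Hence $BA$ is elliptic and the pair lies in $K$.
\item A $(\Hyp,\Ell)$ pair followed by a $\tau_2^N$ block was produced by a $\tau_1$ block whose intermediate pairs are all $(\Hyp,\Ell)$. Hence it lies in $\tau_1(K)$.
\end{itemize}

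So only finitely many renormalized pairs along a spectral path lie outside the compact set $K\cup\tau_1(K)$.

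Your perfectness step relies on the same nonexistent invariant bounded region. The actual branching comes from $(\Ell,\Ell)$: infinitely many $N$ keep $\tau_1^N$, and likewise $\tau_2^N$, inside $(\Ell,\Ell)$ (Proposition~\ref{prop:EE}). Every spectral path reaches $(\Ell,\Ell)$ or can be steered there within two blocks.

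A smaller point: in your empty-interior argument, disjointness of axes does not give ping-pong, since $(\Hyp,\Hyp)^-$ pairs also have disjoint axes. What you need is that, for large $k$, $BA^k$ has its repelling point near $a^-$ and its attracting point near $B(a^+)$. Then the complement of a small arc around $a^-$ is a common strictly invariant interval.
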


Here,
we say that
a cocycle
is
\emph{generic}
if
it does not have a finite order rotation,
nor a parabolic, in the semigroup it generates.
The set of IETs is identified with
the interval $ \left[ 0,1 \right] $.
During the course of the proof of this result,
we will show the following result,
which characterizes completely the
behavior of the cocycle under renormalization.

\begin{theorem}\label{thm:main_renorm}
	Let $ c $ be a cocycle
	and $ T $ an IET.
	If $ T $ is not in the spectrum of $ c $,
	then $ (T, c) $ is uniformly hyperbolic.
	If $ T $ is in the spectrum of $ c $
	and $ T $ is not finite order,
	then the iterates of $ (T,c) $ under
	the accelerated renormalization
	stay bounded.
\end{theorem}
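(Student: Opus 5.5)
We argue through the renormalization dynamics of Section~\ref{sec:renormalisation}: the sequence $(A_n,B_n)$ obtained from $(A,B)$ by applying $\tau_1$ or $\tau_2$ according to the Rauzy path of $T$. The central notion is a \emph{hyperbolic configuration}. We say $(A_n,B_n)$ is in such a configuration if $A_n$, $B_n$ and $B_nA_n$ are all hyperbolic, their attracting fixed points in $\partial\mathbb{H}^2$ all lie in an interval $J$, and their repelling fixed points all lie in a disjoint interval $J'$. In that situation both $A_n$ and $B_n$ map the closure of the complement of $J'$ strictly into $J$. This gives an invariant cone field for every product of $A_n$ and $B_n$.

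\textbf{Step 1 (hyperbolic configuration implies uniform hyperbolicity).} Each $T_n$ is a first return map of $T_0$, so every orbit segment of $T_0$ is a concatenation of bounded-length blocks given by the return times, up to boundary pieces. In a hyperbolic configuration the cone condition makes every long product in the semigroup generated by $A_n,B_n$ expand at an exponential rate. By the usual cone criterion, $(T,c)$ is then uniformly hyperbolic and $\chi>0$. Moreover, the same pair $(A_n,B_n)$ is obtained for all $T'$ sharing the first $n$ letters of the Rauzy path, i.e.\ for all $T'$ in a cylinder interval around $T$. Small perturbations of $T$ inside that interval do not change the configuration, so the complement of the spectrum is open.

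\textbf{Step 2 (dichotomy).} This is the main obstacle. We must show that if no hyperbolic configuration ever appears, then the accelerated renormalizations $\mathcal{Z}^k$ stay in a compact subset of $\SL(2,\R)^2$ modulo conjugation. One accelerated step replaces $(A,B)$ by $(A, BA^{a})$ (or its symmetric version), with $a=a_k$ the corresponding continued fraction coefficient.
\begin{itemize}
\item First, use that $\tr[A,B]=c>2$ is preserved by the Dehn twists, so the axes of $A$ and $B$ never cross when both are hyperbolic. Genericity excludes parabolics and finite-order elliptics from the semigroup.
\item By hyperbolic geometry (the elementary lemmas of Section~\ref{sec:hyp_geom}), if $A$ is hyperbolic with translation length $\ell$ and $B$ has non-crossing axis, then for $a$ large $BA^a$ is hyperbolic, and the triple forms a hyperbolic configuration as soon as $\ell$ or the displacement exceeds a threshold depending on $c$.
\item If $A$ is elliptic of infinite order, some power $A^m$ with $m$ bounded in terms of the rotation angle brings $B A^m$ into a hyperbolic configuration. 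Consequently, while the path avoids hyperbolic configurations, the rotation angles cannot accumulate on $0$ in a way that forces the traces to grow.
\end{itemize}
The intended conclusion is a bound: as long as no hyperbolic configuration occurs, $|\tr A_n|,|\tr B_n|,|\tr A_nB_n|\le K(c)$. The Markov equation $x^2+y^2+z^2-xyz=c+2$ together with bounded traces then gives compactness. Bounded renormalizations give $\log\|\rho_{q_n}\|=O(n)=o(q_n)$, since $q_n\ge\sqrt2^n$. This yields $\chi=0$ and proves Theorem~\ref{thm:main_renorm}.

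\textbf{Step 3 (Cantor set).} By Step 2, the spectrum is exactly the set of $T$ whose Rauzy path never produces a hyperbolic configuration. It is therefore the complement of a union of cylinder intervals, and hence closed. A finite-order $T$ (rational angle) lies in the spectrum only in degenerate cases; genericity lets us discard them.

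To show the spectrum has empty interior, take $T$ in the spectrum and any cylinder $[\gamma_1\dots\gamma_n]$. Extending the path by a long run of one letter lands in a hyperbolic configuration, using Step 2's lemma for hyperbolic or infinite-order elliptic $A_n$. This yields a gap inside every cylinder.

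To show there are no isolated points, note that in every cylinder the bounded pair $(A_n,B_n)$ is itself generic and not simple Fuchsian. So it has at least two distinct continuations that avoid hyperbolic configurations. To see this, show that for a non-empty spectrum both $\tau_1$-first and $\tau_2$-first branches, or two different run lengths $a$, keep traces bounded; we would try to extract this from the Bowditch-type inequality on traces. This gives branching and hence perfectness. If no continuation ever avoids a hyperbolic configuration, the spectrum is empty, which gives the dichotomy in Theorem~\ref{thm:main_spectrum}.
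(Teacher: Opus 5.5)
Your Step 1 is the paper's Proposition~\ref{prop:uh}, the cone criterion for type $(\Hyp,\Hyp)^+$. Your logical skeleton is also the paper's: a path that reaches a hyperbolic configuration is uniformly hyperbolic, and a path that avoids it has bounded renormalizations, hence $\chi=0$ by Proposition~\ref{prop:bounded_renom}.

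The gap is Step 2. It carries the entire content of the theorem, and you only state it as an ``intended conclusion''. There are three problems with it.
\begin{enumerate}
\item The one concrete mechanism you offer is false. A step $(A,B)\mapsto(A,BA^m)$ keeps $A$, so if $A$ is elliptic the new pair is never a hyperbolic configuration, whatever $m$ is. From $(\Ell,\Ell)$ a $\tau_1$-run only yields $(\Ell,\Ell)$ or $(\Ell,\Hyp)$; reaching $(\Hyp,\Hyp)^+$ needs a run of the other letter first. The same mistake breaks your empty-interior argument in Step 3.
\item Your claim that rotation angles ``cannot accumulate on $0$'' is not something you can control along an arbitrary Rauzy path.
\item A bound $K(c)$ valid from the start is false. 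If $A$ is a rotation by a tiny angle, the Markov equation allows $\tr B$ and $\tr AB$ to be arbitrarily large. Only an eventual bound is available.
\end{enumerate}

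The missing idea is to avoid quantitative control altogether. The paper uses the set $K$ of pairs for which at least two of $A$, $B$, $AB$ are elliptic. It is compact because two traces in $[-2,2]$ bound the third via $x^2+y^2+z^2-xyz=c+2$. This is combined with a purely qualitative type-transition analysis.
\begin{enumerate}
\item Pairs of type $(\Ell,\Ell)$ lie in $K$ trivially.
\item For $(\Hyp,\Ell)$, Proposition~\ref{prop:from_HE} gives a sharp threshold. The pair $(A,BA^k)$ is $(\Hyp,\Ell)$ for $k<N(A,B)$ and $(\Hyp,\Hyp)^+$ for all $k\ge N(A,B)$, because the reflection lines $L_1^k$ march monotonically along the axis of $A$.
\item Hence along a $\tau_1$-run that stays in $(\Hyp,\Ell)$, both $BA^k$ and $BA^{k+1}$ are elliptic. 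So the intermediate pairs are in $K$ and the last one is in $\tau_1(K)$ (Lemma~\ref{lem:peage_K}).
\item Since $\tau_1$- and $\tau_2$-runs alternate, a $(\Hyp,\Ell)$ pair followed by a $\tau_2$-run is exactly such a last pair. A $(\Hyp,\Ell)$ pair followed by a $\tau_1$-run that stays in $(\Hyp,\Ell)$ is itself in $K$. The case $(\Ell,\Hyp)$ is symmetric, with $\tau_2(K)$.
\item Types with an elliptic entry never lead back to $(\Hyp,\Hyp)^-$, and $(\Hyp,\Hyp)^-$ is left after finitely many steps.
\end{enumerate}
So a path avoiding $(\Hyp,\Hyp)^+$ eventually stays in $K\cup\tau_1(K)\cup\tau_2(K)$, which is exactly what Theorem~\ref{thm:main_renorm} needs.

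A minor point: boundedness holds only modulo conjugation, so your estimate $\log\|\rho_{q_n}\|=O(n)$ is not justified as stated. The paper fixes the level $m$ and lets the orbit length tend to infinity, so the conjugator $P_m$ contributes only $O(1)$.
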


For a finite order IET,
the corresponding statement is easy:
if $ (A_n,B_n) $ is the last cocycle
of the sequence of renormalizations,
and if the last renormalization applied
was a $ \tau_1 $,
then the IET is in the spectrum if and only
if $ A_n $ is not hyperbolic
(respectively $ \tau_2 $
and $ B_n $).

Let us explain the principle of the proof
of these results.
Let $ T $ be 
a
$ 2 $-IET
and
$ (A,B) $ a cocycle above $ T $.
The accelerated Rauzy induction process
introduced in
Section~\ref{sec:renormalisation}
applied to $ T $
gives a sequence
$ (N_i) $
(infinite if
$ T $ is of infinite order),
which characterizes $ T $,
and a sequence of renormalized cocycles
$ (A_i,B_i) $,
related by
$ (A_{i+1},B_{i+1}) =
\tau_{\gamma_i}^{N_i}
(A_i,B_i) $,
where $ \gamma_i = 0,1 $
alternatively.

We categorize a pair $ (A,B) $ according to its \emph{type},
the pair $ (t_1,t_2) $,
where $ t_1,t_2 \in \left\{ \Hyp, \Ell \right\} $
are the type ($	\Hyp$ for hyperbolic or $ \Ell $ for elliptic) of $ A $ and $ B $.
We refine the type $ (\Hyp, \Hyp) $ by saying that the type of
$ (A,B) $ is $ (\Hyp, \Hyp)^{+} $,
if the axis of $ A $ and $ B $ point in the
same direction
and
$ (\Hyp, \Hyp)^{-} $ otherwise.

The core of the proof is the
description of the possible transitions between
the different types,
when we renormalize the pair $ (A,B) $.
These transitions are represented in
the diagram~\ref{fig:diag}.

\begin{figure}[h]
\[\begin{tikzcd}
	&& {} && \\
	&& {(\Ell,\Ell)} \\
	{(\Hyp,\Ell)} &&&& {(\Ell,\Hyp)} \\
		      && {\textcolor{Mahogany}{(\Hyp,\Hyp)^+}} \\
	&& {(\Hyp,\Hyp)^-}
	\arrow["{\exists^{\infty}\tau_2^N}", shift left=2, from=3-1, to=3-1, loop, in=55, out=125, distance=10mm]
	\arrow["{\exists^{\infty}\tau_1^N,\tau_2^N\textcolor{ForestGreen}{(K)}}", from=2-3, to=2-3, loop, in=55, out=125, distance=10mm]
	\arrow["{\exists^{\infty}\tau_1^N}", tail reversed, from=2-3, to=3-5]
	\arrow["{\exists^{\infty}\tau_2^N}", shift left=3, tail reversed, from=3-1, to=2-3]
	\arrow["{\exists^{\mathrm{fin}}\tau_1^N\textcolor{ForestGreen}{(K)}}"'{pos=0.2}, shift right, from=3-1, to=3-1, loop, in=305, out=235, distance=10mm]
	\arrow["{\forall^{\mathrm{l.e.}}\tau_1^N}"{pos=0.6}, curve={height=-6pt}, from=3-1, to=4-3]
	\arrow["{\exists^{\mathrm{fin}}\tau_2^N\textcolor{ForestGreen}{(K)}}"{pos=-0.15}, from=3-5, to=3-5, loop,
	in=305, out=235, distance=10mm, shift right=3]
	\arrow["{\forall^{\mathrm{l.e.}}\tau_2^N}"', curve={height=6pt}, from=3-5, to=4-3]
	\arrow["{\exists^{\mathrm{fin}}\tau_1^N}", curve={height=-3pt}, from=5-3, to=3-1]
	\arrow["{\exists^{\mathrm{fin}}\tau_2^N}"', shift right, curve={height=6pt}, from=5-3, to=3-5]
	\arrow[from=5-3, to=4-3]
	\arrow["{\exists^{\infty}\tau_1^N}", shift left=2, from=3-5, to=3-5, loop, in=55, out=125, distance=10mm]
\end{tikzcd}\]
  \caption{Transitions between types.}\label{fig:diag}
\end{figure}
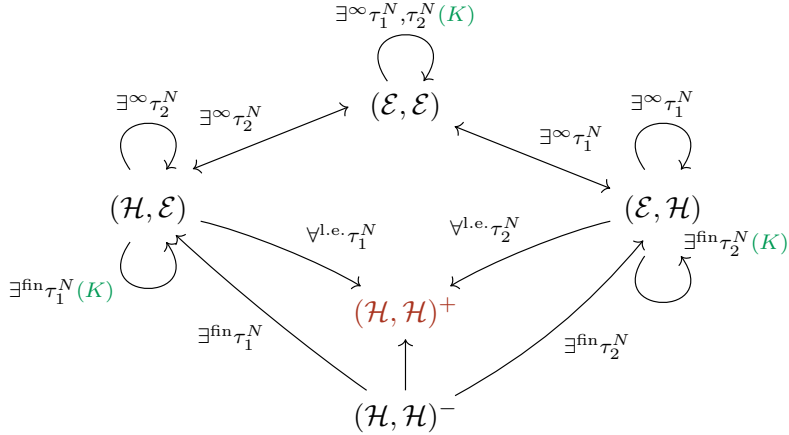

The diagram should be read as follows.
Each vertex is a type $ (t_1,t_2) $.
An edge between two types is labeled
with the type of renormalization,
$ \tau_1^N $ or $ \tau_2^{N} $, that is applied. 
Furthermore, for each transition,
we indicate the number of $ N $ that can be chosen
to give this transition:
finitely many
($ \exists^{\mathrm{fin}} $),
infinitely many
($ \exists^{\infty} $),
or for every $ N $ large enough
($ \forall^{\mathrm{l.e.}} $).
The justifications that these are exactly the possible
transitions are given in Section~\ref{sec:tech_lem}.

The sequence
$ (A_i,B_i) $ 
of renormalizations of the cocycle
over the IET $ T $ gives a path
in this diagram,
with the property that a transition
of type $ \tau_1^{N} $ must be followed
by a transition of type $ \tau_2^{N} $
(and vice versa).
Reciprocally,
every such path
(finite or infinite)
gives a sequence
$ (N_i) $,
which corresponds to a unique IET.

In order to decide that a trajectory
is in the spectrum,
we rely on a classical argument
which is
that
if
the iterated renormalizations of a cocycle
belong to a common compact set,
then
the Lyapunov exponent of the cocycle
vanishes.
The precise statement is
Proposition~\ref{prop:bounded_renom}
and is proved in
Section~\ref{subsec:zeroLyap}.


In our case,
the compact set will be
the subset
$ K $ 
of
the relative character variety defined by
the fact that at least two matrices between
$ A,B $ and $ AB $ are elliptic:

\begin{lemma}~\label{lem:peage_K}
	The set $ K $ is compact,
	and
	\begin{enumerate}
		\item 
			If a pair is of type $ (\Ell, \Ell) $,
			then it is in $ K $.
		\item 
			If a pair
			$ (A,B) $ 
			is of type
			$ (\Hyp, \Ell) $,
			and such that its renormalization by
			$ \tau_1^{N} $
			is still
			$ (\Hyp, \Ell) $,
			then
			$ \tau_1^{k} (A,B) $
			is in $ K $
			for $ k=0, \dots, N-1 $.
			In particular,
			$ \tau_1^{N} (A,B) $
			is in
			$ \tau_1 K $.
	\end{enumerate}
\end{lemma}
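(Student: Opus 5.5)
The plan is to work in trace coordinates $(x,y,z)=(\tr A,\tr B,\tr AB)$. By the fact recalled in Section~\ref{sec:def}, these identify $\cv_c(\mathrm{T}^*,\SL(2,\R))$ (for $c>2$) with a closed subset of the Markov surface $x^2+y^2+z^2-xyz=c+2$, and this identification is a homeomorphism onto its image. To get a closed set, we take ``elliptic'' in the definition of $K$ to mean $\abs{\tr}\le 2$. This changes nothing for generic pairs, which have no parabolics. With this convention, $K$ is the union of the three sets $\{\abs{x},\abs{y}\le 2\}$, $\{\abs{x},\abs{z}\le 2\}$ and $\{\abs{y},\abs{z}\le 2\}$ intersected with the surface, so it is closed. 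Since $AB$ and $BA$ are conjugate, the choice between $AB$ and $BA$ in the definition of $K$ is irrelevant.

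\emph{Compactness.} Consider first the piece $\abs{x},\abs{y}\le 2$. There $z$ solves
\[ z^2 - xy\,z + x^2+y^2-c-2 = 0, \]
so
\[ \abs{z}\le \tfrac{\abs{xy}}{2}+\sqrt{\tfrac{x^2y^2}{4}+c+2}\le 2+\sqrt{c+6}. \]
The equation is symmetric under permuting $x,y,z$, so the other two pieces are bounded by the same constant. Hence $K$ is closed and bounded in $\R^3$, and therefore compact. The only delicate point, which I expect to be the main obstacle, is to make sure that the trace map really is a homeomorphism onto a closed subset for $c>2$, so that compactness in $\R^3$ transfers to the quotient topology. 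I would quote Goldman~\cite{Goldman} for this rather than reprove it.

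\emph{Item (1)} is immediate: if $A$ and $B$ are both elliptic, two of the three matrices $A$, $B$, $AB$ are elliptic.

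\emph{Item (2).} Here $\tau_1^k(A,B)=(A,BA^k)$. In an eigenbasis of the hyperbolic matrix $A$, with eigenvalues $\lambda^{\pm1}$ and $\lambda>1$, write $B=(b_{ij})$. Then
\[ f(k):=\tr(BA^k)=b_{11}\lambda^k+b_{22}\lambda^{-k}, \]
and we extend $f$ to real $k$. The key step is to show that $\{t\in\R : \abs{f(t)}<2\}$ is an interval, by cases on the signs of $b_{11}$ and $b_{22}$.
\begin{itemize}
\item If $b_{11}b_{22}>0$, then $f$ has constant sign and is convex (resp.\ concave), so $\abs{f}$ is convex and its sublevel sets are intervals.
\item If $b_{11}b_{22}<0$, or if one of them vanishes, then $f$ is monotone, so the preimage of $(-2,2)$ is again an interval.
\end{itemize}
The hypotheses say that $B$ and $BA^N$ are elliptic, i.e.\ $\abs{f(0)}<2$ and $\abs{f(N)}<2$. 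By the interval property, $BA^k$ is elliptic for every $0\le k\le N$.

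Consequently, for each $k=0,\dots,N-1$, the pair $\tau_1^k(A,B)=(A,BA^k)$ has $BA^k$ elliptic. Its product $A\cdot BA^k$ is conjugate to $BA^{k+1}$, which is elliptic as well. So two of the three matrices are elliptic, and $\tau_1^k(A,B)\in K$. Finally, $\tau_1^N(A,B)=\tau_1(\tau_1^{N-1}(A,B))\in\tau_1K$.
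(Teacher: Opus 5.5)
Your proof is correct. For compactness and item (1) it is the paper's argument (trace coordinates and the Markov equation), written more carefully. You make $K$ closed by reading ``elliptic'' as $\abs{\tr}\le 2$, you give the explicit bound $\abs{z}\le 2+\sqrt{c+6}$, and you point to the identification of $\cv_c$ with the Markov surface, which the paper recalls in Section~\ref{sec:def}.

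Item (2) is where you genuinely differ. The paper deduces it from Proposition~\ref{prop:from_HE}, a hyperbolic-geometry argument. There $A^n=\mathcal{R}_{L_1^n}\circ\mathcal{R}_{L_2}$, with lines $L_1^n$ perpendicular to the axis of $A$ moving monotonically along it. Once $L_1^n$ misses $L_3$ it misses it forever, so $\{n : BA^n \text{ elliptic}\}$ is an initial segment, after which the pair is of type $(\Hyp,\Hyp)^+$. You get the same initial-segment property algebraically: the function $\abs{\tr(BA^t)}=\abs{b_{11}\lambda^t+b_{22}\lambda^{-t}}$ has interval sublevel sets, by convexity if $b_{11}b_{22}>0$ and by monotonicity otherwise.

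Your route is self-contained, does not rest on a picture, and needs no genericity. It even covers the degenerate case $b_{11}=0$, where $B$ sends the attracting fixed point of $A$ to the repelling one. In that case $BA^n$ stays elliptic for all $n$, and the integer $N(A,B)$ of Proposition~\ref{prop:from_HE} does not exist. The paper excludes this case only through its standing genericity hypothesis. In exchange, the paper's route also identifies the type after the elliptic segment, namely $(\Hyp,\Hyp)^+$. That information is needed for the transition diagram, but not for this lemma.

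One small fix: in $\SL(2,\R)$ a hyperbolic $A$ can have $\tr A<-2$, so $\lambda>1$ is not automatic. Replace $A$ by $-A$ first. This multiplies $\tr(BA^k)$ by $(-1)^k$ and does not change which $BA^k$ are elliptic. Only then extend $f$ to real $t$.
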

This lemma explains the annotation $(K)$ on the diagram.

\begin{proof}
	Recall that the relative character variety
	$ \cv_c $ 
	is defined by the Markov equation
	\begin{equation}
		x^2+y^2+z^2
		-xyz
		= c + 2,
	\end{equation}
	where
	$ x = \tr A $,
	$ y = \tr B $,
	$ z = \tr AB $,
	and
	$ c = \tr \left[ A,B \right] $.
	If at least 2 matrices between
	$ A,B $ and $ AB $ are elliptic,
	for example $A$ and $B$,
	then $ x,y \in \left[ -2,2 \right] $
	and as $ z $ satisfies the Markov equation,
	it must be bounded.
	This proves that $ K $ is compact.

	By definition, $ (\Ell,\Ell) $ is in $ K $.
	If $ (A,B) $ is $ (\Hyp,\Ell) $
	and a renormalization
	$ \tau_1^{N} $ is still in
	$ (\Hyp, \Ell) $,
	then by Proposition~\ref{prop:from_HE},
	$ \tau_1^{k} (A,B) = (A, BA^{k}) $
	is also
	$ (\Hyp, \Ell) $,
	for $ k=0, \dots, N $.
	This means that
	$BA^k$ and $ (BA^{k})A $ are elliptic,
	for $ k=0, \dots, N-1 $,
	hence that
	$ (A,B A^{k}) $ is in $ K $.

\end{proof}

The symmetric statement holds for the type
$ (\Ell, \Hyp) $.

\subsection{Proof of the results}\label{sub:proof_main}
We now prove
Theorem~\ref{thm:main_spectrum}
and
Theorem~\ref{thm:main_renorm}.
To prove that the spectrum of a cocycle
is a Cantor set,
we use the classical criterion:
a subset of
$ \left[ 0,1 \right] $
is a Cantor set if and only if
it is
closed, without isolated points,
and with empty interior.

\vspace{2mm}

As in Section~\ref{sec:renormalisation},
we identify a 2-IET in
$ \left[ 0,1 \right] $
with its sequence
$ (\gamma_n)$
of top and bottom.
Starting from the cocycle
$ (A,B) $,
we have the associated sequence
of renormalizations
$ (A_n,B_n) $.
This sequence gives us a path
in the main diagram~\ref{fig:diag},
which follows the types of the
renormalizations.
Let $ S $ be the set of sequences
$ (\gamma_n) $
such that the associated
$ (A_n,B_n) $ 
never lands in $ (H,H)^{+} $.
We first show that $ S $,
if not empty,
is  a Cantor set,
then that $ S $ coincides with the spectrum of
our cocycle.

\vspace{2mm}

\emph{$S$ is closed.}
If a sequence is not in $ S $,
then it lands in $ (H,H)^{+} $
at some stage.
Any other sequence
which is
close enough
to it share the same initial
trajectory up to this stage,
hence also lands in $ (H,H)^+ $,
and is also not in $ S $.

\vspace{2mm}

\emph{$S$ has empty interior.}
Consider a sequence in $ S $.
At any given stage,
we can make it bifurcate so that it
lands in $ (H,H)^+ $:
this proves that the complement
of $ S $ is dense.
This is established by a direct
inspection of the diagram.
If we are in $ (E,E) $,
we can assume that we
come from $ \tau_1 $ by symmetry,
then we can go to $ (H,E) $ by a
$ \tau_2^{N} $
and then to $ (H,H)^{+} $ by
a $ \tau_1^{N} $.
If we are in
$ (H,E) $,
coming from $ \tau_2 $,
we can apply a
$ \tau_1^N $ to go to
$ (H,H)^{+} $.
If we came from $ \tau_1 $,
we can apply a
$ \tau_2^{N} $ to stay
in $ (H,E) $,
and we are back in the previous case.
The other cases are dealt with
in the same way.

\vspace{2mm}

\emph{$S$ has no isolated points.}
Consider a sequence in $ S $.
As in the previous paragraph,
we can make it bifurcate at any stage,
so that it is different, but stays in $ S $:
If at some stage it is in $ (E,E) $,
we can create a subsequent different sequence
that stays forever in $ (E,E) $, by alternating
some $ \tau_1^N $ and $ \tau_2^N $.
If at some stage it is in
$ (H,E) $,
then in at most 2 steps we can
reach $ (E,E) $,
and apply the previous case.

\emph{$S$ coincides with the spectrum.}
If a sequence is not in $ S $,
then it is $ (H,H)^{+} $ at some stage,
hence uniformly hyperbolic by
Proposition~\ref{prop:uh},
and not in the spectrum.
If a sequence is in $ S $,
we show that the sequence
of renormalizations of the cocycle
is bounded,
hence that the IET is in the spectrum
by Proposition~\ref{prop:bounded_renom}.
After some stage,
the sequence lies only in
$ (E,E) $, $ (H,E) $
or $ (E,H) $.
When in $ (E,E) $,
the cocycle is in $ K $
by Lemma~\ref{lem:peage_K}.
When in $ (H,E) $,
there are two cases.
First case,
the next step is
$ \tau_1^{N} $ to stay
in $ (H,E) $,
and the cocycle is in $ K $,
again by Lemma~\ref{lem:peage_K}.
Second case,
the next step is
$ \tau_2^{N} $,
but this means that the previous
step was a $ \tau_1^{N} $ coming
from $ (H,E) $.
Hence we can write
our cocycle
$ c_1 = \tau_1^{N} c_0 $
with $ c_0 $ in $ (H,E) $,
and by Lemma~\ref{lem:peage_K},
all the $ \tau_1^{k} c_0 $,
for $ k=0,\dots,N $, are in $ (H,E) $.
Hence $ \tau_1^{N-1} c_0 $ is in $ K $
by Lemma~\ref{lem:peage_K},
and $ c_1 $ is in $ \tau_1(K) $.
The case $ (E,H) $ is similar,
and we have shown that at all stages,
the renormalized cocycles are in
the compact
$ K \cup \tau_1(K) $.
This concludes the proof.

\subsection{Bounded renormalizations imply zero Lyapunov exponent}\label{subsec:zeroLyap}
Here we explain how an argument classically used to control Birkhoff sums over rotations and interval exchange transformation can be adapted to cocycles, as to control the Lyapunov exponent of a cocycle from the knowledge of its behaviour under renormalization.

\vspace{2mm}

Let $T_0 : [0,1] \longrightarrow [0,1]$ be a $2$-IET and let $(A_0,B_0)$ be a pair of matrices in $\mathrm{SL}(2, \mathbb{R})$. We denote by $\rho_0$ the associated piecewise constant cocycle over $T_0$. In what follows we denote by 

\begin{itemize}

\item $I_n = [0,a_n]$ the interval such that the $n$-th (accelerated) renormalization of $T_0$ is the first-return of $T_0$ on $I_n$;

\item $T_n : I_n \longrightarrow I_n$ the $n$-th (accelerated) renormalization of $T_0$;

\item $(A_n,B_n)$ the $n$-th (accelerated) renormalization of the pair of matrices $(A_0,B_0)$ over $T_0$ and $\rho_n$ the associated cocycle over $T_n$.

\end{itemize}

\begin{proposition}\label{prop:bounded_renom}
	Assume that the sequence $(A_n,B_n)$ defines a sequence of representation
	$\rho_n$ which remain within a compact subset of
	$\cv(\pi_1 \mathrm{T}^*, \mathrm{SL}(2,\mathbb{R}))$. Then for all $x \in [0,1]$,
	$$\frac{1}{n} \log ||\rho(T_0^{n-1}(x)) \cdots \rho(T_0(x)) \cdot \rho(x)|| \rightarrow 0.$$
	This implies in particular that 
	$$ \chi(\rho, T_0) = 0.$$

\end{proposition}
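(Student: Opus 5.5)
The plan is a non-abelian Denjoy–Koksma argument in three steps. First, turn compactness in the character variety into genuine norm bounds, up to conjugation, with conjugators that grow slowly. Second, use Proposition~\ref{prop:returntimes} to cut an arbitrary orbit product into renormalized blocks. Third, add up the hyperbolic displacements.

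Norms will be measured through the action on $\mathbb{H}^2$. Fix a basepoint $o$. Then $\log\|M\| = \tfrac12 d(o,Mo)+O(1)$ for $M\in\SL(2,\R)$, so it suffices to show that $d(o,\rho_m(x)o)=o(m)$ uniformly in $x$.

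\textbf{Step 1: well-placed points.} Since $[\rho_n]$ stays in a compact set of characters, and the generic pairs relevant here are non-elementary, there is a constant $C$ with the following property. For each $n$ there is a point $p_n\in\mathbb{H}^2$ such that
\[
d(p_n,A_np_n)\le C \quad\text{and}\quad d(p_n,B_np_n)\le C.
\]
Equivalently, $p_n=g_n^{-1}o$ where $g_nA_ng_n^{-1}$ and $g_nB_ng_n^{-1}$ lie in a fixed compact set. Then every word $W$ of length $\ell$ in $A_n^{\pm1},B_n^{\pm1}$ satisfies
\[
d(o,Wo)\le C\ell+2d(o,p_n).
\]
All the difficulty is concentrated in the conjugators, that is, in bounding $d(o,p_n)$.

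\textbf{Step 2: block decomposition.} Proposition~\ref{prop:returntimes} identifies the return times of the $n$-th accelerated renormalization as $q_n$ and $q_n+q_{n-1}$. Using it, I will write the Birkhoff product $\rho_m(x)$ with $q_n\le m<q_{n+1}$ via the Ostrowski expansion $m=\sum_k c_kq_k$ with $c_k\le a_{k+1}$. This expresses $\rho_m(x)$ as a product of at most $\sum_{k\le n}(a_{k+1}+1)$ factors. Each factor is a return product over $I_k$, hence equal to $A_k$ or $B_k$, except for one boundary piece at each level: the block along the orbit segment that has not yet returned. That piece is a partial product, i.e.\ a prefix of $B_kA_k^{N}$, and Step 1 controls it as well. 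Summing the Step 1 bound over levels gives
\[
d(o,\rho_m(x)o)\lesssim C\,n+C\sum_{k\le n}\log(1+a_{k+1})+\sum_{k\le n}d(o,p_k)\cdot(\text{number of level-}k\text{ blocks}).
\]
To avoid the factor $a_{k+1}$ multiplying $d(o,p_k)$, I will instead telescope the basepoints. Consecutive blocks of the same level $k$ contribute $\le C$ each when measured from $p_k$, and one pays $2d(p_k,p_{k+1})$ only once when changing level. The resulting bound is
\[
d(o,\rho_m(x)o)\le C\sum_{k\le n}a_{k+1}+2\sum_{k\le n}d(p_k,p_{k+1})+2d(o,p_0).
\]

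\textbf{Step 3: the main obstacle.} The crude first term is too big: $\sum_k a_{k+1}$ is not $o(q_n)$ in general. So I will use the geometry more finely. A power $A_k^{c}$ with $c\le a_{k+1}$, evaluated at $p_k$, moves it by at most
\[
d(p_k,A_k^cp_k)\le \ell(A_k)\,c+2\,d(p_k,\operatorname{axis}(A_k)).
\]
Here the translation length $\ell(A_k)$ is bounded, since traces are bounded on the compact set, and this must be weighed against $q_n\ge \prod_k a_k$. The genuinely delicate estimate is $d(p_k,p_{k+1})$. My plan is to prove $d(p_k,p_{k+1})\le C'\log(1+a_{k+1})+C'$ using $A_{k+1}\in\{A_k,B_kA_k^{N}\}$ with $N=a_{k+1}$, by the following argument. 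At $p_k$, the generator $B_{k+1}$ moves by at most $C(N+1)$. The set of points where both new generators move by at most $D$ has diameter $O(\log D)$ around $p_{k+1}$; this is the exponential divergence of displacement away from an axis for a non-elementary pair with bounded character, and it is the technical lemma I expect to be hardest. I would first try to prove it with the hyperbolic geometry of Section~\ref{sec:hyp_geom}.

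Granting it, the total error is $O\big(\sum_{k\le n}\log(1+a_k)+n\big)$. Since $\prod a_k\le q_n$ and $n\le 2\log_2 q_n$ by the lemma $q_n\ge\sqrt2^{\,n}$, this is $O(\log q_n)=o(m)$. The terms linear in $\ell(A_k)c_k$ sum to at most $C\,m/\min(\cdot)$; if they cause trouble, I would absorb them by choosing $p_k$ on the axis when $A_k$ is hyperbolic. Dividing by $m$ gives the convergence to $0$ for every $x$, and hence $\chi(\rho,T_0)=0$.
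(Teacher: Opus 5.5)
\textbf{The gap is the lemma in Step 3, and it is false.} Your whole estimate rests on the claim that the set of points moved by at most $D$ by both $A_{k+1}$ and $B_{k+1}$ has diameter $O(\log D)$, which would give $d(p_k,p_{k+1})\le C'\log(1+a_{k+1})+C'$. The problem is that the displacement of an isometry of $\mathbb{H}$ grows \emph{linearly}, not exponentially, in the distance $r$ to its axis or centre. For a translation of length $\ell$ one has $\sinh\frac{d(q,Aq)}{2}=\cosh r\,\sinh\frac{\ell}{2}$, so $d(q,Aq)\approx 2r$. That sublevel set therefore has diameter of order $D$, not $\log D$.

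The well-placed points really can jump by an amount linear in $a_{k+1}$. Take $A_k=\mathrm{diag}(\lambda,\lambda^{-1})$ and $B_k=\begin{pmatrix}\epsilon&\beta\\ \gamma&\delta\end{pmatrix}$ with $\epsilon$ small and $|\delta|<1$. Conjugating by $\mathrm{diag}(\lambda^{j/2},\lambda^{-j/2})$ sends $(A_k,B_kA_k^j)$ to $\left(A_k,\begin{pmatrix}\epsilon\lambda^j&\beta\\ \gamma&\delta\lambda^{-j}\end{pmatrix}\right)$. So for $j\le N\approx\log(1/\epsilon)/\log\lambda$, all these pairs are of type $(\Hyp,\Ell)$, lie in a fixed compact set, and have commutator trace $>2$; this is exactly the $(K)$-loop at $(\Hyp,\Ell)$. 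Yet the well-placed point moves by $N\log\lambda$. Hence your total error is not $O(\log q_n)$, and the argument as written does not close.

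It can be rescued, though not in the way you suggest. The true bound is $d(p_k,p_{k+1})\le C'(a_{k+1}+1)$, which gives an error $O(\sum_{k\le n}a_k+n)$. Contrary to what you say at the start of Step 3, $\sum_{k\le n}a_k=o(q_n)$: use $a_k\le q_k/q_{k-1}$, $q_n\ge\sqrt2^{\,n-k-1}q_k$ and $q_{k-1}\ge\sqrt2^{\,k-1}$. The only bad term in your crude bound is the top-level one, where the number of blocks is at most $m/q_n$, not $a_{n+1}$.

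\textbf{How the paper avoids the conjugator problem.} The difficulty comes from tying the renormalization level to the time. The paper fixes the level $m$ first. Every orbit segment of length $n$ then factors as $M_1\,\omega(A_m,B_m)\,M_0$, where:
\begin{itemize}
\item the entrance and exit pieces $M_0,M_1$ have length bounded in terms of $m$ only;
\item $\omega$ is a word of length $k\le n/q_m$, since each return to $I_m$ takes at least $q_m$ steps.
\end{itemize}
Conjugating by the single matrix $P_m$ gives $\frac1n\log\|\rho(T_0^{n-1}(x))\cdots\rho(x)\|\le F_m/n+\log D/q_m$, where $F_m$ absorbs $\|P_m\|\,\|P_m^{-1}\|$ and the entrance and exit pieces. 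Letting $n\to\infty$ with $m$ fixed removes $F_m$; then letting $m\to\infty$ removes $\log D/q_m$.

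No control of how $P_m$ varies with $m$ and no multi-scale decomposition is needed, and the bound is already uniform in $x$. Your route, once repaired, would give an explicit sublinear rate, but that is more than the proposition asks for.
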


The rest of the paragraph is dedicated to the proof of this Proposition.

\vspace{2mm}

\paragraph*{\bf Decomposition of the orbit}

We start with $n > 0$ which we think of as large enough.

\vspace{2mm}

The (classical) idea is to decompose the orbit of $x$ under $T_0$ using times corresponding to times of the sequence of renormalization $T_n$. Pick $m$ a (large) integer such that at least two points of the orbit of $x$ up to time $n$ belong to $I_m$. Let $n_0, n_1$ be the two integers such that 

\begin{enumerate}

\item $n_0$ is the smallest integer such that $T_0^{n_0}(x) \in I_m$;

\item $n_1$ is the smallest integer such that $T_0^{n-n_1}(x) \in I_m$.
\end{enumerate}

We start by observing that there exists a uniform constant $C_M > 0$ such that for all $x$, for all $n$ as above we have $n_0, n_1 \leq C_M$. Since $T_m$ is the first return of $T_0$ on $I_m$, we have 

$$ T_0^n(x) = T_0^{n_1} \circ T_m^{k} \circ  T_0^{n_0}(x) $$ for some integer $k$. 

\vspace{2mm}

\paragraph*{\bf Bounds for the cocycle} By hypothesis, the pair of matrices
$(A_m,B_m)$ induces a representation $\rho_m$ which remains in a fixed compact
set of the character variety.
This means that there exists a compact set
$K \subset
\mathrm{SL}(2,\mathbb{R}) \times  \mathrm{SL}(2,\mathbb{R})$,
and for all $m \in \mathbb{N}$ a matrix $P_m \in \mathrm{SL}(2,\mathbb{R})$ such
that $P_m(A_m,B_m)P_m^{-1} = (A'_m, B'_m) \in K$. In particular there exists a
constant $D > 0$, which does not depend on $m$, such that $||A'_m||, ||B'_m|| \leq D $.

\vspace{2mm}

\paragraph*{\bf Estimating the product of matrices.} Recall that for any $m \in
\mathbb{N}$, $A_m =   \rho(T_0^{k_m}(y)) \cdots \rho(T_0(y))\cdot\rho(y)$ for a
point $y$ in the left continuity interval of $T_m$ and where $k_m$ is the
integer such that $T_m(y) = T_0^{k_m}ty)$ on this continuity interval.
Furthermore, we have seen that $k_m$ is equal to $q_m$ or $q_m + q_{m-1}$, by Proposition \ref{prop:returntimes}.

\vspace{2mm} From $ T_0^n(x) = T_0^{n_1} \circ T_m^{k} \circ  T_0^{n_0}(x) $ we get 
$$ \rho(T_0^{n-1}(x)) \cdots \rho(T_0(x)) \cdot \rho(x) = M_1 \cdot \omega(A_m,B_m) \cdot M_0 $$ where 

\begin{itemize}
	\item $M_0$ is a product of $n_0$ matrices which are either $A_0$ or $B_0$;
	\item $M_1$ is a product of $n_1$ matrices which are either $A_0$ or $B_0$;
	\item $\omega(A_m,B_m)$ is a word in $A_m$ or $B_m$;
\end{itemize}

Writing $\omega(A_m,B_m) = P_m^{-1} \omega(A'_m,B'_m) P_m$, we get 
$$ || P_m^{-1} \omega(A'_m,B'_m) P_m || \leq || P_m^{-1} || || P_m || D^k.$$
Furthermore, we can bound $|| M_0 || , ||M_1|| \leq E^C$ where $E = \sup(||A_0||, ||B_0||)$.
This yields 
$$ ||\rho(T_0^{n-1}(x)) \cdots \rho(T_0(x)) \cdot \rho(x) || \leq || E^2 P_m^{-1} || || P_m || D^k.$$
Passing to the logarithm we get 
$$ \frac{1}{n} \log ||\rho(T_0^{n-1}(x)) \cdots \rho(T_0(x)) \cdot \rho(x) || \leq  \frac{F_m}{n} + \log D \frac{k}{n}.$$
Since $n \geq k q_m$ we get
$$ \frac{1}{n} \log ||\rho(T_0^{n-1}(x)) \cdots \rho(T_0(x)) \cdot \rho(x) || \leq  \frac{F_m}{n} + \log D \frac{1}{q_m}.$$
So making $n$ tend to infinity we obtain
 $$\limsup \frac{1}{n} \log ||\rho(T_0^{n-1}(x)) \cdots \rho(T_0(x)) \cdot \rho(x)|| \leq \log D \frac{1}{q_m}.$$
 This is independent of the choice of $m$, and since $q_m$ tends to infinity this concludes the proof.

\section{Back to representations}\label{sec:back_to_rep}

In this section,
we translate the statements on cocycles and renormalization established in the previous sections
to the corresponding ones about representations and the action
of the mapping class group.

First, we describe the spectrum of a generic representation.
\begin{theorem}\label{thm:repr_spectrum}
	Let $ \rho \in \cv_c (\Ts, \SL(2,\R)) $
	be a generic representation.
	If $ \rho $ is not simple Fuchsian,
	then its spectrum is non-empty,
	and is a
	Cantor set.
\end{theorem}

\begin{proof}
	According to Section~\ref{sec:cantor},
	the Lyapunov exponent of
	$ \rho $ with respect to a measured lamination
	$ \lambda $ is the Lyapunov exponent of
	the induced locally constant cocycle above
	a 2-IET.
	Depending on the slope of the lamination,
	the cocycle is
	of the form
	$ (A^{\pm} ,B^{\pm} ) $,
	where
	$ (A,B) = \left( \rho(a), \rho(b) \right) $.
	The spectrum of $ \rho $ is then
	the union of the spectrum of the cocycles
	$ (A^{\pm} ,B^{\pm} ) $,
	and by
	Theorem~\ref{thm:main_spectrum}
	each of these spectrum is empty or a Cantor set.
	It remains to prove that
	if $ \rho $ is not simple Fuchsian,
	at least one of them is not empty.
	By contraposition,
	we suppose that the spectra of
	$ (A^{\pm} ,B^{\pm} ) $
	are empty.

	The type of $ (A,B) $
	must be
	$ (\Hyp,\Hyp)^{+} $
	or
	$ (\Hyp,\Hyp)^{-} $.
	If it is
	$ (\Hyp,\Hyp)^{-} $,
	in the terminology of
	Lemma~\ref{lem:prodHH},
	it must be in the first or third case,
	which means that
	$ \tau_1(A,B) = (A,AB) $ is either
	$ (\Hyp,\Hyp)^{-} $
	or
	$ (\Hyp,\Hyp)^{+} $.
	If moreover
	$ (A,AB) $ is
	$ (\Hyp,\Hyp)^{+} $
	(see
	Figure~\ref{fig:fucshian})
	then it is classical that
	$ \rho $
	is the holonomy of a hyperbolic structure
	on a pair of pants
	(see e.g.~\cite{Goldman}),
	hence is simple Fuchsian.
	Arguing by contradiction,
	we assume that
	we are not in this case,
	so that
	$ (A^{\pm} ,B^{\pm} ) $,
	and its orbit under the mapping class group,
	are either
	$ (\Hyp,\Hyp)^{+} $
	or 
	the first case of
	$ (\Hyp,\Hyp)^{-} $.

	Up to replacing
	$ (A,B) $
	by
	$ (A,B^{-1}) $,
	we can assume that
	it is in the first case of
	$ (\Hyp,\Hyp)^{-} $
	(recall that, as we assume
	$ c > 2 $,
	the axis of $ A $ and
	$ B $ do not intersect).
	We then have
	that
	$ \tau_1 (A,B) = (A,AB) $ is
	$ (\Hyp,\Hyp)^{-} $,
	and the proof of
	Lemma~\ref{lem:prodHH}
	shows that
	$ \ell_{AB} \le \ell_{B} - \ell_{A} $
	(denoting by $ \ell_A $
	translation length of a hyperbolic isometry
	$ A $).
	By assumption,
	$ \tau_1 (A,B) $
	is still in the first case of
	$ (\Hyp,\Hyp)^{-} $,
	and,
	iterating this argument,
	so is
	$ \tau_1^n (A,B) $
	for every $ n > 0 $.
	We then have
	$ \ell_{A^n B} \le \ell_{B} - n \ell_{A} $,
	which is a contradiction.
\end{proof}

We record here the following 
corrollary:
\begin{corollary}
	If $ \rho $ is not simple Fuchsian,
	then its spectrum is not empty,
	and contains a simple curve.
\end{corollary}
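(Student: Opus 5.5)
The corollary should follow from Theorem~\ref{thm:repr_spectrum} and the structure of the transition diagram (Figure~\ref{fig:diag}). I will produce an IET of \emph{finite order} (a rational slope, i.e.\ a simple closed curve) lying in the spectrum, using the criterion stated after Theorem~\ref{thm:main_renorm}. That criterion says a finite-order IET whose last renormalization step was $\tau_1$ (resp.\ $\tau_2$) lies in the spectrum exactly when the final $A_n$ (resp.\ $B_n$) is not hyperbolic. Geometrically, a rational slope corresponds to a simple closed curve $\gamma$. The cocycle over it is periodic, and its Lyapunov exponent vanishes precisely when $\rho(\gamma)$ is not hyperbolic. So it suffices to find a simple closed curve $\gamma$ with $\rho(\gamma)$ elliptic (or parabolic, though genericity excludes this).

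The plan has three steps.

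\emph{Step 1.} Since $\rho$ is not simple Fuchsian, Theorem~\ref{thm:repr_spectrum} gives that at least one of the cocycles $(A^{\pm},B^{\pm})$ has a non-empty spectrum. Fix such a cocycle and a point $T$ in its spectrum. By the proof in Section~\ref{sub:proof_main}, the path of types associated to $T$ never enters $(\Hyp,\Hyp)^{+}$.

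\emph{Step 2.} I claim this path must eventually visit a type containing an $\Ell$, namely $(\Hyp,\Ell)$, $(\Ell,\Hyp)$ or $(\Ell,\Ell)$. The only alternative is that it stays forever in $(\Hyp,\Hyp)^{-}$. Reading the diagram, from $(\Hyp,\Hyp)^{-}$ the only exits go to $(\Hyp,\Ell)$, $(\Ell,\Hyp)$ or $(\Hyp,\Hyp)^+$, so staying forever in $(\Hyp,\Hyp)^-$ is the case to rule out. Here I would reuse the translation-length argument from the proof of Theorem~\ref{thm:repr_spectrum}. Along a long block $\tau_1^{N}$ that remains in the first case of Lemma~\ref{lem:prodHH}, we get $\ell_{A^kB}\le \ell_B-k\ell_A$, which forces $N$ to be bounded. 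Alternating blocks should then give a strictly decreasing sequence of translation lengths bounded below by a positive constant, which is impossible. If the $T$ chosen in Step~1 happened to avoid elliptic types, I would instead use the bifurcation moves from Section~\ref{sub:proof_main}: at any stage in $(\Hyp,\Hyp)^-$, some $\tau^N$ leads to $(\Hyp,\Ell)$ or $(\Ell,\Hyp)$, so an elliptic type is reachable in any case.

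\emph{Step 3.} Once some renormalized pair $(A_n,B_n)$ has an elliptic entry, say $B_n$, stop the Rauzy path there, with the last step being a $\tau_2$-type step. Stopping is legitimate because arriving at $(\Hyp,\Ell)$ happens via a $\tau$ step that made $B_n$ the new entry. The finite Rauzy path then defines a rational IET, i.e.\ a simple closed curve $\gamma$ with $\rho(\gamma)$ conjugate to $B_n$. By the finite-order criterion, this IET lies in the spectrum, so the spectrum contains a simple curve.

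The main obstacle is the bookkeeping in Step~3. One has to check that the stopping rule (which matrix must be non-hyperbolic, depending on whether the last step was $\tau_1$ or $\tau_2$) matches the elliptic entry that was just created; if it does not, one more step of the diagram is needed. Step~2 also requires care, namely the rigorous exclusion of an infinite stay in $(\Hyp,\Hyp)^-$.
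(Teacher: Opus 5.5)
Your route is the paper's own: non-emptiness comes from Theorem~\ref{thm:repr_spectrum}, and a finite-order point is then read off from the type dynamics. The paper's proof is just a pointer to the proof of Theorem~\ref{thm:main_spectrum}. You are right that an infinite stay in $(\Hyp,\Hyp)^-$ must be ruled out, but Step~2 does not do it.

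\textbf{The gap in Step~2.} The inference ``strictly decreasing sequence of translation lengths bounded below by a positive constant, which is impossible'' is false: such a sequence simply converges. What is needed is a uniform decrement. In the first case of Lemma~\ref{lem:prodHH}, each elementary step lowers $\ell_A+\ell_B$ by at least $\min(\ell_A,\ell_B)$. So you need that minimum bounded away from $0$ along the whole path, and you give no reason for any positive lower bound.

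Such a bound does exist, but only because $c>2$ is fixed. From $c-2=4\sinh^2(\ell_A/2)\sinh^2(\ell_B/2)\sinh^2 d$ and the first-case configuration ($L_1$ separating $L_2$ from $L_3$), one gets $\max(\ell_A,\ell_B)\ge\delta(c)>0$. The alternation of $\tau_1$- and $\tau_2$-blocks then makes the smaller length at one stage the larger one at the next. None of this is in your proposal.

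The fallback is not supported either. The label $\exists^{\mathrm{fin}}$ in the diagram allows there to be no admissible $N$ at all. For a pants representation no simple closed curve is elliptic, so no $(\Hyp,\Hyp)^-$ pair ever exits to $(\Hyp,\Ell)$ or $(\Ell,\Hyp)$. Whether that exit exists depends exactly on the non-simple-Fuchsian hypothesis, which your Step~2 never invokes.

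\textbf{The repair.} You need neither the point $T$ nor the Step~3 bookkeeping. The argument is already contained in the proof of Theorem~\ref{thm:repr_spectrum}; run it on the initial pair.
\begin{enumerate}
\item If $A$ or $B$ is elliptic, you are done.
\item Otherwise, after replacing $B$ by $B^{-1}$ if necessary, $(A,B)$ is $(\Hyp,\Hyp)^-$. Iterate $\tau_1$ alone.
\item As long as $(A,A^nB)$ is in the first case of Lemma~\ref{lem:prodHH}, you have $\ell_{A^{n+1}B}\le\ell_{A^nB}-\ell_A$. This is a fixed decrement, so it lasts only finitely many steps.
\item The third case is the pants configuration, which the hypothesis excludes. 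Hence the second case occurs and some $A^mB$ is elliptic.
\end{enumerate}
That proof uses ``empty spectrum'' only through ``no simple closed curve has elliptic image'', so it proves precisely this.

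Finally, the entries of renormalized pairs are the values of $\varphi_n\rho$ on $a$ and $b$ (Section~\ref{sub:traj_mcg}), hence images of simple closed curves. A simple closed curve with non-hyperbolic image lies in the spectrum, since the cocycle along the closed leaves is periodic with that monodromy. So the question of which entry must be elliptic, which you single out as the main obstacle, does not arise.
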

\begin{proof}
	The fact that the spectrum is not empty is
	contained in the previous Theorem,
	and the proof of
	Theorem~\ref{thm:main_spectrum}
	shows that when the spectrum is not empty,
	it contains a simple curve.
\end{proof}
Note that the second point is a result of Goldman~\cite{Goldman}.

We now state the corresponding result for the dynamics of
the mapping class group.
Recall that in Section~\ref{sub:traj_mcg}
we associated to every measured lamination
a trajectory in the mapping class group,
corresponding to the sequence of renormalizations.
\begin{theorem}\label{thm:repr_mcg}
	Let $ \rho \in \cv_c (\Ts, \SL(2,\R)) $
	be a generic representation
	and
	$ \lambda $ a measured lamination.
	Let $ (\varphi_n) $ be the trajectory in
	the mapping class group associated to $ \lambda $.
	\begin{itemize}
		\item 
			If $ \lambda $ is not in the spectrum of $ \rho $,
			then $ \varphi_n \rho $ goes to infinity in the
			character variety.
		\item 
			If $ \lambda $ is in the spectrum of $ \rho $,
			then $ \varphi_n \rho $ does not go to infinity.
	\end{itemize}
\end{theorem}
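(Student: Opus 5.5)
The plan is to reduce both statements to the cocycle results of Section~\ref{sec:cantor}, via the dictionary of Section~\ref{sub:traj_mcg}. The slope of $\lambda$ determines one of the four cocycles $(A^{\pm},B^{\pm})$ over a $2$-IET $T$. By Section~\ref{sub:traj_mcg}, the renormalized pairs $(A_n,B_n)$ are the images of the standard generators under $\varphi_n \rho$. So the class $[\varphi_n\rho]$ in $\cv_c$ is the class of $(A_n,B_n)$, up to the conjugation-invariant identification given by the trace coordinates $(\tr A_n, \tr B_n, \tr A_nB_n)$ on the Markov surface. Going to infinity in the character variety is therefore equivalent to $\max(|\tr A_n|,|\tr B_n|,|\tr A_nB_n|)\to\infty$. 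Passing between the full Rauzy sequence and the accelerated one is harmless here. The accelerated sequence is a subsequence, and in the bounded case the full sequence between two accelerated times stays in $K\cup\tau_1(K)$ by Lemma~\ref{lem:peage_K}.

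\emph{Second point.} Suppose $\lambda$ is in the spectrum. If $T$ is of infinite order, Theorem~\ref{thm:main_renorm} shows that the accelerated renormalizations stay bounded. More precisely, the proof in Section~\ref{sub:proof_main} shows they eventually lie in the compact set $K\cup\tau_1(K)$ (or its symmetric counterpart for $(\Ell,\Hyp)$). Hence $[\varphi_n\rho]$ has a subsequence, indeed eventually all accelerated times, in a compact set, so it does not go to infinity. If $\lambda$ is rational, the sequence is finite and the last pair has a non-hyperbolic generator. I would interpret the trajectory as continuing by powers of the Dehn twist along the closed curve. That twist fixes the elliptic generator, and $\tr(BA^k)$ stays bounded when $A$ is elliptic and generic (no finite-order rotation, so $A$ is conjugate to an irrational rotation). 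This gives recurrence in this case too.

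\emph{First point.} Suppose $\lambda$ is not in the spectrum. By the proof in Section~\ref{sub:proof_main}, the trajectory in the diagram of Figure~\ref{fig:diag} reaches $(\Hyp,\Hyp)^{+}$ at some finite stage $n_0$. I would then show two things. First, $(\Hyp,\Hyp)^{+}$ is absorbing under both $\tau_1^N$ and $\tau_2^N$, which is the unlabelled arrow structure of the diagram, justified in Section~\ref{sec:tech_lem}. Second, there is growth: if $A,B$ are hyperbolic with coherently oriented, disjoint axes, then $\ell_{BA}\ge \ell_A+\ell_B$, or at least $\ell_{BA}\ge \max(\ell_A,\ell_B)+\delta$. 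This is the same hyperbolic-geometry estimate used in the proof of Theorem~\ref{thm:repr_spectrum} via Lemma~\ref{lem:prodHH}, with the reversed inequality. Since each renormalization replaces one generator by the product, at least one of the lengths $\ell_{A_n},\ell_{B_n}$ increases by at least $\min(\ell_{A_n},\ell_{B_n})$ at each step. This already gives Fibonacci-type growth of the larger translation length, so the traces tend to infinity. Alternatively, I can invoke the uniform hyperbolicity from Theorem~\ref{thm:main_renorm} (Proposition~\ref{prop:uh}). Under uniform hyperbolicity $\|A_n\|,\|B_n\|$ grow exponentially in the return times $q_n\ge\sqrt2^n$. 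Since the spectral radius controls the norm up to conjugacy for a uniformly hyperbolic cocycle, the traces diverge.

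\emph{Main obstacle.} The delicate step is turning divergence of traces into divergence in the character variety uniformly along the trajectory. One must check that $\min(\ell_{A_n},\ell_{B_n})$ does not degenerate to $0$, since otherwise the length increments become tiny. My first attempt is to use that the pair at stage $n_0$ has positive translation lengths, and that in $(\Hyp,\Hyp)^+$ the minimum of the two lengths is non-decreasing under both $\tau_1$ and $\tau_2$. This gives increments bounded below by $\min(\ell_{A_{n_0}},\ell_{B_{n_0}})>0$, so linear growth, which suffices.
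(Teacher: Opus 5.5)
Your proof is correct. For the second bullet it is the paper's argument: the accelerated renormalizations eventually lie in the compact set $K\cup\tau_1(K)$ from the proof of Theorem~\ref{thm:main_renorm}, so $\varphi_n\rho$ has a subsequence in a compact set.

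For the first bullet you use a different key estimate.

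\emph{The paper's route.} It appeals to uniform hyperbolicity of the semigroup generated by a $(\Hyp,\Hyp)^+$ pair, via the cone criterion behind Proposition~\ref{prop:uh}. It uses this in the form $r(w(A_0,B_0))\ge C\,r(A_0)^{|w|_1}r(B_0)^{|w|_2}$, and counts letters through the $\ell^1$-norm of the matrix of $\varphi_n$.

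\emph{Your route.} You work directly with translation lengths. Take $L_2$ the common perpendicular, $A=\mathcal{R}_{L_1}\mathcal{R}_{L_2}$ and $B=\mathcal{R}_{L_2}\mathcal{R}_{L_3}$. Coherent orientation puts $L_1$ on the side of $L_2$ containing the attracting points, and $L_3$ on the other side. Any path from $L_1$ to $L_3$ therefore crosses $L_2$, which gives $\ell_{AB}=2d(L_1,L_3)\ge\ell_A+\ell_B$. You should write this out, since the paper only records the reversed inequality, in the $(\Hyp,\Hyp)^-$ case. (Your hedge ``$\ell_{BA}\ge\max(\ell_A,\ell_B)+\delta$'' is unnecessary, and would need $\delta$ uniform to work.)

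Your induction also needs absorption of $(\Hyp,\Hyp)^+$. This follows from cone invariance together with $[A,BA]=[A,B]$ and $[BA,B]=B[A,B]B^{-1}$: these keep the commutator trace equal to $c>2$, so the axes stay disjoint. With that, the induction is sound. It is also sharper than you use. It gives $\ell_{A_n}\ge|A_n|_1\ell_{A_0}+|A_n|_2\ell_{B_0}$, which is exactly the paper's inequality with $C=1$ for the words $A_n,B_n$ that actually occur. Hence you also recover the $e^{Cq_n}$ rate at accelerated times.

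\emph{Comparison.} The paper's version is less explicit: an inequality with bases $r(A_0),r(B_0)$ is stronger than what Proposition~\ref{prop:uh} literally states. Its advantage is that it rests only on a cone argument, the tool more likely to extend beyond the punctured torus. Yours is elementary and self-contained.

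Two side claims in your proposal are wrong, though you do not use them.
\begin{itemize}
\item ``Fibonacci-type growth'' fails under repeated $\tau_1$. There the lengths grow only linearly in the number of steps, as you later acknowledge.
\item Lemma~\ref{lem:peage_K} does not place the full sequence between accelerated times in $K\cup\tau_1(K)$; it only covers $(\Hyp,\Ell)\to(\Hyp,\Ell)$ transitions. Along a $\tau_1^N$ starting from $(\Ell,\Ell)$, the intermediate pairs $(A,BA^k)$ can have $BA^{k-1},BA^k,BA^{k+1}$ all hyperbolic. The paper itself notes such excursions outside $K$.
\end{itemize}
Only the accelerated subsequence is needed for the second bullet, so neither error affects your proof.
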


As the proof will show,
we can be more precise in both cases.
If $ \lambda $ is not in the spectrum of $ \rho $,
and its support is not a simple curve,
then
$ \varphi_n \rho $ goes to infinity
exponentially fast:
there exist a constant $ C > 0 $
such that for $ n $ large enough:
\begin{equation}
	\norm{\varphi_n \rho}
	\ge
	e^{C n}
	,
\end{equation}
where
we consider $ \cv_t $ as
the Markov surface
embedded
in $ \R^{3} $,
and
$ \norm{\cdot} $ is any norm.
Moreover,
with $ k_n = a_1 + \cdots + a_n $ being the times
corresponding
to the accelerated renormalization
(in the notations of Section~\ref{sec:renormalisation}),
we have
\begin{equation}
	\norm{\varphi_{k_n}  \rho}
	\ge
	e^{C q_n}
	,
\end{equation}
and recall that
$ q_n \ge (\sqrt{2})^{n} $,
so that the growth is \emph{super-exponential}
at those times.

Reciprocally, if $ \lambda $ is in the spectrum of $ \rho $,
then there exists a compact $ K $,
which does not depend on $ \rho $ or $ \lambda $,
such that
$ \varphi_{k_n} \rho $ is in $ K $.
In particular, if the continued fraction development
associated to $ \lambda $ is of \emph{bounded type},
then $ \varphi_n \rho $ stays in a compact.

We also remark that
by the classical relationship between
$ \SL(2,\Z) $
and
continued fraction,
every diverging sequence
in
$ \SL(2,\Z) $
is at a bounded distance
from a subsequence
of the
$ (\varphi_n) $
associated to some
lamination $ \lambda $.
In particular,
if $ (f_n) $ is a
diverging
sequence
of hyperbolic matrices in
$ \SL(2,\Z) $,
and the sequence of repulsive fixed lines
of $ (f_n) $ converges
to $ \lambda $,
then the conclusion of
Theorem~\ref{thm:repr_mcg} applies to
$ (f_n) $.

\begin{proof}
	First,
	we assume that
	$ \lambda $ is not in the spectrum of $ \rho $.
	By the proof of
	Theorem~\ref{thm:main_spectrum},
	there exists a $ n_0 $ such that
	$ (A_0, B_0) = (\varphi_{n_0} \rho)(a,b) $
	is of type
	$ (\Hyp, \Hyp)^+ $
	(up to replacing $ a $ or $ b $
	by its inverse).
	By (the proof of) Proposition~\ref{prop:uh},
	the semigroup generated by
	$ (A_0,B_0) $ is uniformly hyperbolic.
	In particular, there exists a constant
	$ C > 0 $ such that
	\begin{equation}
		r( w(A_0,B_0) ) \ge C r(A_0)^{\abs{w}_1} r(B_0)^{\abs{w}_2},
	\end{equation}
	where
	$ r $ is the spectral radius of a matrix,
	$ w $ is a word in the semigroup generated by $ A_0,B_0 $,
	$ \abs{w}_1 $ and $ \abs{w}_2 $  are the number of occurrences of $ A_0 $
	and $ B_0 $ in $ w $.

	We write
	$ (A_n, B_n) =
	\varphi_n (A_0,B_0) $.
	The total number of $ A_0 $
	and $ B_0 $
	in
	$ A_n$
	and
	$ B_n $,
	$ \abs{A_n}_1
	+
	\abs{A_n}_2
	+
	\abs{B_n}_1
	+
	\abs{B_n}_2 $,
	is equal to 
	the sum of the coefficients
	of the matrix in $ \SL(2,\Z) $
	representing $ \varphi_n $.
	Denote by
	$ \norm{\varphi_n}_1 $
	this quantity.
	By the definition of
	$ \varphi_n $,
	$ \norm{\varphi_n}_1 \ge n $.
	Taking logarithms in the previous
	equation,
	we then have:
	\begin{equation}
		\log r (A_n)
		+
		\log r (B_n)
		\ge
		n
		\min \left( 
			\log r(A_0),
			\log r(B_0)
		\right)
		+
		2 \log C,
	\end{equation}
	and up to renaming the constant we have
	\begin{equation}
		\log ( r (A_n) + r (B_n) )
		\ge
		C n.
	\end{equation}
	Recall that the coordinates of the embedding
	of the character variety $ \cv(\Ts, \SL(2,\R)) $
	in $ \R^3 $ are
	$ x = \tr A, y= \tr B$
	and
	$z=\tr AB $.
	We equip $ \R^3 $ with the norm
	$ \norm{(x,y,z)} = \abs{x} + \abs{y} + \abs{z} $,
	and as we have
	$ \abs{\tr M} \ge r(M) $
	when $ M $ is hyperbolic,
	we conclude that
	\begin{equation}
		\norm{ \varphi_n (A_0,B_0) }
		\ge
		e^{C n} 
		.
	\end{equation}
	For the estimate at the times
	of the accelerated renormalizations
	$ k_n $,
	we argue in the same way,
	using the fact that
	$ \norm{\varphi_{k_n}}_1 \ge q_n $,
	(see
	Section~\ref{sec:renormalisation}).

	We now assume that
	$ \lambda $ is in the spectrum of
	$ \rho $.
	Considering the associated
	cocycle $ (A,B) $,
	Theorem~\ref{thm:main_renorm}
	asserts that
	the sequence of accelerated renormalizations
	of $ (A,B) $ stays in a universal compact
	$ K $  of the character variety.
	Translated back to $ \rho $,
	this means that
	$ \varphi_n \rho $
	stays in $ K $ at the times
	$ k_n $
	of the acceleration.
	In particular,
	it does not diverge.
	If $ \lambda $ is of bounded type,
	this means that the differences
	$ k_{n+1} - k_n $ are bounded,
	and the excursions outside
	$ K $ of
	$ \varphi_n \rho $
	must also be bounded.
\end{proof}

\section{Hyperbolic geometry warm-up}\label{sec:hyp_geom}

In what follows, $A$ and $B$ are two elements in $\mathrm{PSL}(2, \mathbb{R})$. In this paragraph, we systematically answer the question of determining the type of $A \cdot B$ as an isometry of the hyperbolic space $\mathbb{H}$.
All the statements of this section are elementary exercises in hyperbolic
geometry: proof are either briefly sketched or left to the
reader, with the hope that the pictures are convincing enough.

\vspace{2mm} \fbox{
\parbox{\textwidth}{ 
The hyperbolic space $\mathbb{H}$ is given an orientation, and all angles are \textit{oriented} angles which we will think of as elements of $\mathbb{R}/ 2\pi \mathbb{Z}$.
}}

\vspace{2mm}

\subsection{Preliminaries}

We first recall basic facts about isometries of $\mathbb{H}$. Specifically, any orientation-preserving isometry of $\mathbb{H}$ can be written as the product of two reflections. We elaborate a bit on this fact here.

\begin{lemma}
\label{lem:decompR}
Let $A \in \mathrm{PSL}(2, \mathbb{R})$ be a rotation of angle $\alpha$ and centre $c$. Given $L_1$  a line through $c$, denote by $L_2$ the line through $c$ forming an (oriented) angle $\frac{\alpha}{2}$ with $L_1$ (\textit{i.e.} the oriented angle from $L_1$ to $L_2$ is $\frac{\alpha}{2}$). Then we have 

$$ A = \mathcal{R}_{L_2} \circ  \mathcal{R}_{L_1} $$ where  $\mathcal{R}_{L_1}$ and $ \mathcal{R}_{L_2}$ are the reflection through $L_1$ and $L_2$ respectively.

\end{lemma}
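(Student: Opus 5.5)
The plan is to verify the identity pointwise: first on the centre $c$, then on the geodesics through $c$, and finally on all of $\mathbb{H}$ by a rigidity argument. Since both sides are orientation-preserving isometries, it is enough to check that they agree at one point and that their differentials agree there.

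First I would note that the composition $\mathcal{R}_{L_2}\circ\mathcal{R}_{L_1}$ is orientation-preserving, being a product of two orientation-reversing isometries. It also fixes $c$, since $c$ lies on both $L_1$ and $L_2$. An orientation-preserving isometry of $\mathbb{H}$ fixing $c$ is a rotation about $c$. Such a rotation is determined by the oriented angle by which it turns tangent directions at $c$, because an isometry is determined by its value and its differential at a single point. So it remains to compute this angle and show it equals $\alpha$.

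For the angle computation I would work in the tangent plane $T_c\mathbb{H}$, which is a Euclidean plane, and use the exponential map at $c$. The differentials of $\mathcal{R}_{L_1}$ and $\mathcal{R}_{L_2}$ at $c$ are the Euclidean reflections across the lines $T_cL_1$ and $T_cL_2$, and by hypothesis the oriented angle from $T_cL_1$ to $T_cL_2$ is $\frac{\alpha}{2}$. The planar fact that composing the reflection across $\ell_1$ and then across $\ell_2$ gives the rotation by twice the oriented angle from $\ell_1$ to $\ell_2$ then yields a rotation by $\alpha$. One can check this planar fact with complex numbers: identify $\ell_1$ with $\mathbb{R}$, so that the reflection across $\ell_1$ is $z\mapsto\bar z$, while the reflection across $\ell_2 = e^{i\alpha/2}\mathbb{R}$ is $z\mapsto e^{i\alpha}\bar z$. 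Their composite is $z\mapsto e^{i\alpha}z$.

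Putting these together, $\mathcal{R}_{L_2}\circ\mathcal{R}_{L_1}$ and $A$ agree at $c$ and have the same differential there, hence they coincide. The only real subtlety is the orientation convention, namely that $A$ is defined as the rotation of oriented angle $\alpha$ with respect to the fixed orientation of $\mathbb{H}$. This forces the order of composition: $\mathcal{R}_{L_1}$ is applied first and the angle is measured from $L_1$ to $L_2$. Reversing either one would produce the rotation by $-\alpha$.

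Finally, because $\frac{\alpha}{2}$ is only defined modulo $\pi$ when $\alpha\in\mathbb{R}/2\pi\mathbb{Z}$, I would observe that $L_2$ is a line and not an oriented ray. Lines through $c$ are naturally parametrised by angles modulo $\pi$, so the two choices of $\frac{\alpha}{2}$ give the same $L_2$, and the statement is well posed.
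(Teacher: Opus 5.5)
Your proof is correct and complete. The paper does not actually prove this lemma: it is one of the ``elementary exercises'' of Section~\ref{sec:hyp_geom}, supported only by a figure, so there is no competing argument to compare with.

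Your route is the standard way to turn the picture into a proof:
\begin{itemize}
\item $\mathcal{R}_{L_2}\circ\mathcal{R}_{L_1}$ is orientation-preserving and fixes $c$;
\item an isometry of $\mathbb{H}$ is determined by its value and its differential at one point;
\item the differential at $c$ is the composite of the Euclidean reflections of $T_c\mathbb{H}$ across $T_cL_1$ and $T_cL_2$, which you compute via $z\mapsto e^{i\alpha}\overline{\bar z}=e^{i\alpha}z$.
\end{itemize}
This pins down both the order of composition and the factor of two. Your remark that $\frac{\alpha}{2}$ is only defined modulo $\pi$, which matches the parametrisation of lines through $c$, is also right and makes the statement well posed.

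Incidentally, your computation shows that the parenthetical in item (1) of Lemma~\ref{lem:decomptype} (``half the angle between $L$ and $M$'') should read ``twice the angle''. That corrected version is consistent with the present lemma and with its use in the proof of Lemma~\ref{lem:prodRR}.
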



\begin{figure}[h]
	  \centering
	\begin{subfigure}{0.4\textwidth}
		  \centering
		\includegraphics[scale=0.3]{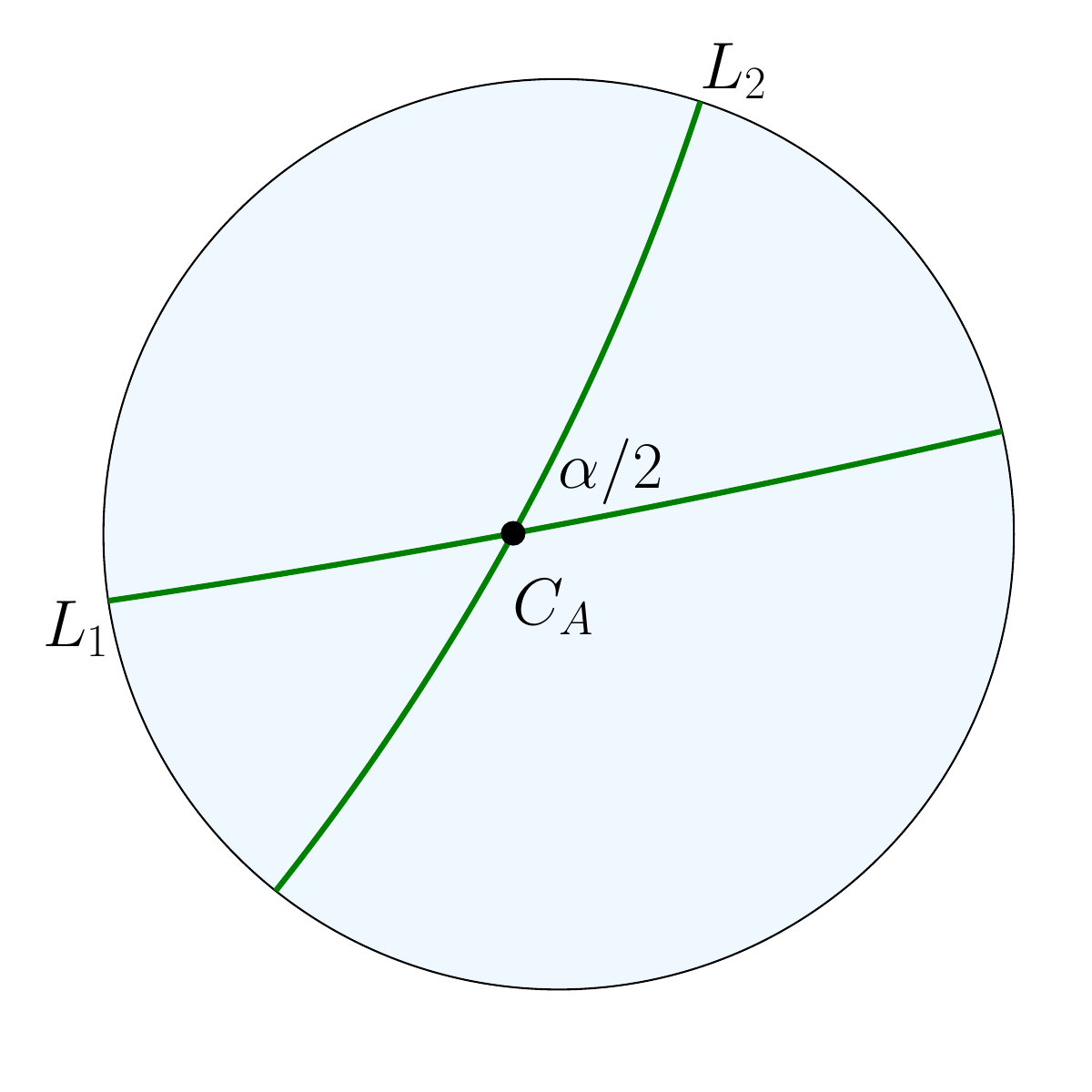}
		\caption{A rotation as a product of 2 reflections}
	\end{subfigure}
	\begin{subfigure}{0.4\textwidth}
		  \centering
		\includegraphics[scale=0.3]{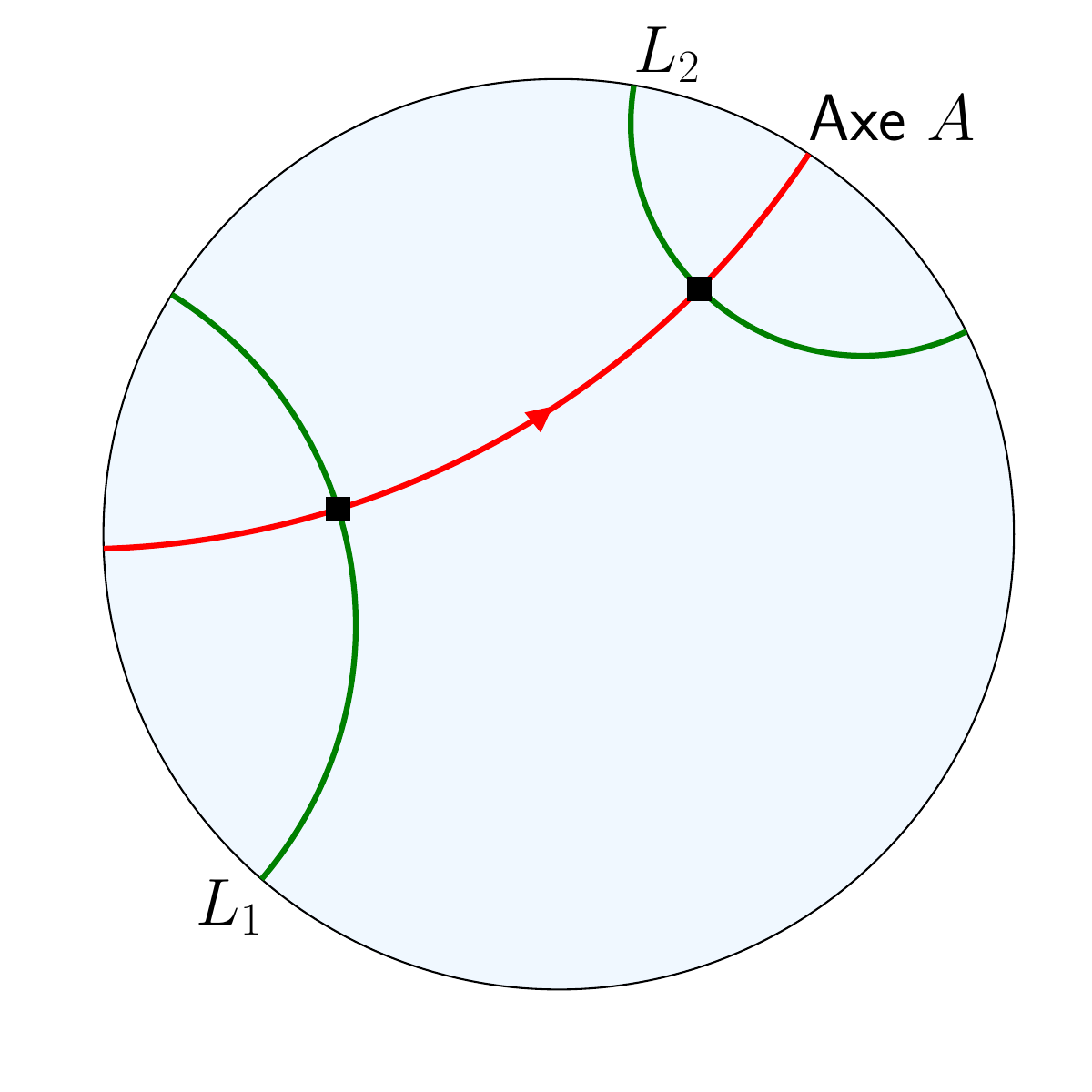}
		\caption{A translation as a product of 2 reflections}
	\end{subfigure}
\end{figure}

\begin{lemma}
\label{lem:decompH}
Let $A \in \mathrm{PSL}(2, \mathbb{R})$ be a hyperbolic translation of length $l >0$ and axis $L$. Given $L_1$ a line perpendicular to $L$ intersecting $L$ in a point $p$, denote by $L_2$ the line perpendicular to $L$ through the middle point of the line segment $[p,A(p)]$.  Then we have 

$$ A = \mathcal{R}_{L_2} \circ  \mathcal{R}_{L_1} $$ where $\mathcal{R}_{L_1}$ and $ \mathcal{R}_{L_2}$ are the reflection through $L_1$ and $L_2$ respectively.

\end{lemma}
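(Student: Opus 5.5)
The plan is to prove Lemma~\ref{lem:decompH} by pure synthetic hyperbolic geometry, comparing the two sides on the axis $L$ and then using rigidity of isometries. First, $L_1$ and $L_2$ are both perpendicular to $L$, so each reflection $\mathcal{R}_{L_i}$ preserves $L$ (setwise) and reverses its orientation. Hence the composition $\mathcal{R}_{L_2}\circ\mathcal{R}_{L_1}$ is an orientation-preserving isometry of $\mathbb{H}$ that preserves $L$ together with its orientation, and acts on $L$ without fixed points (unless $L_1=L_2$, which is excluded since $l>0$). It is therefore a hyperbolic translation with axis $L$.

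Next we compute its translation length and direction by restricting to $L$, parametrised by arclength $s$ with $p$ at $s=0$ and $A(p)$ at $s=l$ (so $A$ translates by $+l$). The reflection $\mathcal{R}_{L_1}$ acts on $L$ as $s\mapsto -s$, and $\mathcal{R}_{L_2}$, where $L_2$ meets $L$ at the midpoint $s=l/2$, acts as $s\mapsto l-s$. The composition is therefore $s\mapsto l-(-s)=s+l$, which is exactly the restriction of $A$ to $L$.

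Finally, two orientation-preserving isometries that coincide on a geodesic (indeed, on two distinct points together with orientation) are equal, since $\mathcal{R}_{L_2}\circ\mathcal{R}_{L_1}\circ A^{-1}$ is orientation preserving and fixes $L$ pointwise, hence is the identity. This gives $A=\mathcal{R}_{L_2}\circ\mathcal{R}_{L_1}$.

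The only mildly delicate point is the midpoint convention: one must check that the distance between $L_1$ and $L_2$ is $l/2$, not $l$, so that the composition translates by $2\cdot(l/2)=l$. The midpoint choice in the statement is exactly what ensures this, in parallel with the half-angle in Lemma~\ref{lem:decompR}. The orientation of the translation is also correct, from $L_1$ towards $L_2$, which is the direction of $A$.
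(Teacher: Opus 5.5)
Your proof is correct. On the axis the composition is $s\mapsto -s$ followed by $s\mapsto l-s$, i.e.\ $s\mapsto s+l$, and an orientation-preserving isometry of $\mathbb{H}$ fixing two points is the identity, so $\mathcal{R}_{L_2}\circ\mathcal{R}_{L_1}=A$. Your first paragraph, identifying the composition as a translation along $L$, is not needed once you have this rigidity step.

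The paper gives no written proof of this lemma; it is left as an elementary exercise illustrated by a figure. Your argument is the standard verification of that picture.
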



Finally, we need some sort of converse to the previous two lemmata. 

\begin{lemma}
\label{lem:decomptype}
Let $L$ and $M$ be two lines in $\mathbb{H}$. The following statements hold.

\begin{enumerate}
\item If $L$ and $M$ intersect in $\mathbb{H}$, then $\mathcal{R}_{L} \circ \mathcal{R}_{M}$ is a rotation (of oriented angle half the angle between $L$ and $M$).
\item If $L$ and $M$ do not intersect in $\mathbb{H}$ but intersect in $\partial \mathbb{H}$, then $\mathcal{R}_{L} \circ \mathcal{R}_{M}$ is a parabolic isometry.
\item If $L$ and $M$ do not intersect in $\mathbb{H}$ nor in $\partial \mathbb{H}$, then $\mathcal{R}_{L} \circ \mathcal{R}_{M}$ is a hyperbolic isometry, of translation length twice the distance between $L$ and $M$.
\end{enumerate}

\end{lemma}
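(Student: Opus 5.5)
The plan is to prove each of the three cases with the same normal form: place $L$ and $M$ in standard position using an isometry of $\mathbb{H}$. This is harmless because conjugating by an isometry $g$ replaces $\mathcal{R}_L\circ\mathcal{R}_M$ by $\mathcal{R}_{gL}\circ\mathcal{R}_{gM}$ and does not change its type. Once in standard position, the composition $\mathcal{R}_L\circ\mathcal{R}_M$ is orientation preserving, and its type can be read off from its fixed points in $\mathbb{H}\cup\partial\mathbb{H}$.

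In case (1), let $c = L\cap M$. Both reflections fix $c$, so the composition fixes $c$ and is a rotation about $c$. To find the angle, I will compare with Lemma~\ref{lem:decompR}. Choose $L_1 = M$ and $\alpha$ equal to twice the oriented angle from $M$ to $L$. That lemma then writes the rotation of angle $\alpha$ as $\mathcal{R}_{L_2}\circ\mathcal{R}_{L_1}$ with $L_2 = L$, which identifies the composition. The angle comes out as twice the angle between the lines, so the parenthetical ``half'' in the statement should be read with that normalization; I will simply track the factor of $2$ carefully.

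In case (3), two ultraparallel lines have a unique common perpendicular $N$. Each reflection preserves $N$ and both of its endpoints, so the composition fixes the two endpoints of $N$. It has no fixed point in $\mathbb{H}$: such a point would be fixed by $\mathcal{R}_M$ composed with the reflection in $L$, which forces it to lie on both $L$ and $M$. Hence the composition is hyperbolic with axis $N$. For the translation length, take $p = N\cap M$. Then $\mathcal{R}_M(p) = p$, and $\mathcal{R}_L(p)$ is the point of $N$ at distance $2d(L,M)$ from $p$. This matches Lemma~\ref{lem:decompH} with $L_1 = M$ and $L_2 = L$ passing through the midpoint of $[p,A(p)]$.

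In case (2), the lines share exactly one ideal point $\xi$, and both reflections fix $\xi$. The composition has no fixed point in $\mathbb{H}$, since $L\cap M=\emptyset$. It has no second fixed point on $\partial\mathbb{H}$ either: in the upper half-plane with $\xi=\infty$, the lines are $\{\operatorname{Re} z = a\}$ and $\{\operatorname{Re} z = b\}$ with $a\neq b$, and the composition is the translation $z\mapsto z+2(a-b)$, which is parabolic.

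The main (mild) obstacle is keeping the orientation and sign conventions consistent with Lemmas~\ref{lem:decompR} and~\ref{lem:decompH}. To handle it, I will do all three computations explicitly in the upper half-plane, where reflections are $z\mapsto -\bar z$ and $z\mapsto R^2/\bar z$ up to conjugation. Composing these gives explicit Möbius maps, whose traces confirm each type: rotation when the trace is less than $2$ in absolute value, parabolic when it equals $2$, hyperbolic when it exceeds $2$.
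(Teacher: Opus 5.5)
The paper gives no proof of this lemma. It is one of the statements of Section~\ref{sec:hyp_geom} left to the reader, with the figures as justification. Your argument is the standard proof: normalize by an isometry, read off the type from the fixed points, and check against explicit formulas in the upper half-plane. It is correct.

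You are also right that in (1) the rotation angle is \emph{twice} the oriented angle from $M$ to $L$, as Lemma~\ref{lem:decompR} says; the word ``half'' in the statement is a slip.

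Two small points in case (3) need fixing.
\begin{enumerate}
\item The reflections $\mathcal{R}_L$ and $\mathcal{R}_M$ do not fix the endpoints of the common perpendicular $N$. Since $N$ crosses each line orthogonally, each reflection reverses $N$ and \emph{swaps} its endpoints; it is the composition of two swaps that fixes them. In the model where $N$ is the imaginary axis and $L=\{\lvert z\rvert=r_1\}$, $M=\{\lvert z\rvert=r_2\}$, the map $\mathcal{R}_L\colon z\mapsto r_1^2/\bar z$ exchanges $0$ and $\infty$, while $\mathcal{R}_L\circ\mathcal{R}_M\colon z\mapsto (r_1/r_2)^2 z$ fixes both.
\item Your claim that a fixed point $x\in\mathbb{H}$ of $\mathcal{R}_L\circ\mathcal{R}_M$ must lie on $L\cap M$ needs one more step. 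Such an $x$ satisfies $\mathcal{R}_M(x)=\mathcal{R}_L(x)=y$. If $y\neq x$, then both $L$ and $M$ are the perpendicular bisector of $[x,y]$, forcing $L=M$; hence $y=x$ and $x\in L\cap M$.
\end{enumerate}
The same remark applies to the one-line ``no fixed point'' claim in case (2).

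Both issues disappear if you rely on the explicit normal forms you propose at the end. These give $z\mapsto z+2(a-b)$ in case (2), and in case (3) a hyperbolic map with translation length $2\lvert\log(r_1/r_2)\rvert=2d(L,M)$.
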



\subsection{Product of two rotations}

In this paragraph we assume that

\begin{itemize}
\item $A$ is a rotation of centre $c_A \in \mathbb{H}$ and of angle  $\theta_A \in [0,2\pi)$;
\item  $B$ is a rotation of centre $c_B \in \mathbb{H}$ and of angle  $\theta_B \in [0,2\pi)$;
\item $d$ is the distance between $c_A$ and $c_B$.
\end{itemize}

\noindent Note that the triple $(\theta_A, \theta_B, d)$ determines the pair $(A,B)$ up to conjugacy in $\mathrm{PSL}(2,\mathbb{R})$.

\begin{lemma}
\label{lem:prodRR}
Let $d$ and $\theta_A$ be given. Then there exists a positive number $\alpha = \alpha(d, \theta_A) < 2\pi$ such that we have:

\begin{enumerate}
\item for all $\theta_B \in (-\alpha, \alpha)$; $AB$ is elliptic;
\item for all $\theta_B \in \{-\alpha, \alpha \}$; $AB$ is parabolic;
\item for all $\theta_B \in (\alpha, 2\pi- \alpha)$; $AB$ is hyperbolic.
\end{enumerate}

\noindent Moreover, for fixed $\theta_A$, $\alpha$ tends to $0$ when $d$ tends to infinity.

\end{lemma}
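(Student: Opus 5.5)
We will decompose both rotations into reflections through a common line, so that the product becomes a product of just two reflections, and then read off the type from Lemma~\ref{lem:decomptype}. Let $L_0$ be the geodesic through $c_A$ and $c_B$. By Lemma~\ref{lem:decompR}, with $L_1 = L_0$, we write $B = \mathcal{R}_{L_0} \circ \mathcal{R}_{M_B}$, where $M_B$ is the line through $c_B$ making oriented angle $-\theta_B/2$ with $L_0$. Similarly, with $L_2 = L_0$, we write $A = \mathcal{R}_{M_A} \circ \mathcal{R}_{L_0}$, where $M_A$ is the line through $c_A$ making oriented angle $\theta_A/2$ with $L_0$. The two middle reflections cancel, and we obtain
\[ AB = \mathcal{R}_{M_A} \circ \mathcal{R}_{M_B}. \]
By Lemma~\ref{lem:decomptype}, $AB$ is then elliptic, parabolic or hyperbolic according as $M_A$ and $M_B$ intersect in $\mathbb{H}$, are asymptotic at $\partial\mathbb{H}$, or are ultraparallel.

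We now fix $d$ and $\theta_A$, so that $M_A$ is fixed, and let $\theta_B$ vary. The line $M_B$ then rotates about $c_B$ with angle $-\theta_B/2 \in (-\pi,0]$, which sweeps out all lines through $c_B$ exactly once as $\theta_B$ ranges over $[0,2\pi)$. For $\theta_B = 0$ (or near it), $M_B = L_0$ (or a line close to $L_0$), and it meets $M_A$ at $c_A$ (or near it), so $AB$ is elliptic. The set of lines through $c_B$ that meet a fixed line $M_A$ not passing through $c_B$ is an open interval of directions, whose endpoints are the two lines through $c_B$ asymptotic to $M_A$, one at each ideal endpoint of $M_A$. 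The complement of this interval consists of the ultraparallel lines. Translating back to $\theta_B$, the elliptic region is an open interval containing $0$, bounded by two parabolic values and complemented by a hyperbolic interval. This gives the trichotomy.

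The statement, however, asserts that the two parabolic values are the symmetric pair $\pm\alpha$, and this is the delicate point. I would check the symmetry by reflecting across $L_0$, combined with reversing the orientation. This sends $(\theta_A,\theta_B)$ to $(-\theta_A,-\theta_B)$, so for fixed $\theta_A$ the symmetry holds only up to this change. To get it honestly, I would instead compute with the trace formula for a product of two rotations. With the convention $\tr = 2\cos(\theta/2)$, the standard formula is
\[ \tfrac12 \tr(AB) = \cos\tfrac{\theta_A}{2}\cos\tfrac{\theta_B}{2} - \sin\tfrac{\theta_A}{2}\sin\tfrac{\theta_B}{2}\cosh d. \]
The condition $|\tr AB| = 2$ then gives the boundary values explicitly. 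I expect that the correct reading is that the elliptic locus is an interval $(\alpha_-,\alpha_+)$ around $0$, which is symmetric only after identifying $\theta_B$ with $-\theta_B$ in $\mathrm{PSL}$ and working with unoriented angles. I would state whichever normalization makes the formula symmetric. Handling this sign convention is the main obstacle.

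For the limit as $d\to\infty$, I would use hyperbolic trigonometry. The line through $c_B$ asymptotic to an endpoint of $M_A$ makes an angle with $L_0$ that is at most the angle of parallelism of the distance from $c_B$ to $M_A$. That distance tends to infinity with $d$ when $\theta_A$ is fixed and nonzero, and the angle of parallelism tends to $0$. Hence $\alpha \to 0$. Equivalently, in the trace formula, $\cosh d \to \infty$ forces $\sin(\theta_B/2) \to 0$ on the elliptic locus.
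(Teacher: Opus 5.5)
Your argument is the paper's argument. The paper also takes $L$ through the two centres and writes $A=\mathcal{R}_{L_A}\circ\mathcal{R}_L$ and $B=\mathcal{R}_L\circ\mathcal{R}_{L_B}$, so that $AB=\mathcal{R}_{L_A}\circ\mathcal{R}_{L_B}$. It then sweeps one line about its centre and reads off the type of $AB$ from whether that line meets the other. The only difference is that the paper fixes $\theta_B$ and moves $L_A$, which is how the lemma is actually used in Proposition~\ref{prop:EE}; you follow the quantifiers of the statement. Like yours, the paper's sweep yields only an elliptic arc containing $0$, bounded by two parabolic values and complemented by a hyperbolic arc. It gives no argument for the symmetry $\pm\alpha$, and none for the limit $d\to\infty$. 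Your estimate $\abs{\sin(\theta_B/2)}<2/(\cosh d\,\abs{\sin(\theta_A/2)})$ on the elliptic locus does supply that limit.

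On the point you left open: stop looking for a normalization, because the symmetric form is false. Use your own formula with $\theta_A=\pi/2$ and $\cosh d=3$.
\begin{itemize}
\item $\theta_B=\pi/2$ gives $\tfrac12\tr AB=\tfrac12-\tfrac32=-1$, so $AB$ is parabolic.
\item $\theta_B=-\pi/2$ gives $\tfrac12\tr AB=\tfrac12+\tfrac32=2$, so $AB$ is hyperbolic.
\end{itemize}
With the opposite orientation convention the two values swap, and the asymmetry persists. So no $\alpha$ as in the statement exists. In general, for $\theta_A\neq0$ the elliptic arc is symmetric about $0$ only in the half-turn case $\theta_A=\pi$.

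Your proposed remedy does not work either. $\mathrm{PSL}(2,\R)$ identifies $\theta_B$ with $\theta_B+2\pi$, not with $-\theta_B$, and replacing $B$ by $B^{-1}$ genuinely changes the type of $AB$, as the example shows.

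The correct statement is exactly what your sweep already proves, assuming $d>0$ and $\theta_A\neq0$. There are $\alpha_-<0<\alpha_+$ with $\alpha_+-\alpha_-<2\pi$ such that:
\begin{itemize}
\item $AB$ is elliptic on $(\alpha_-,\alpha_+)$;
\item $AB$ is parabolic at $\alpha_\pm$;
\item $AB$ is hyperbolic on $(\alpha_+,2\pi+\alpha_-)$;
\item $\alpha_\pm\to0$ as $d\to\infty$.
\end{itemize}
This is all that Proposition~\ref{prop:EE} needs.

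One small correction to your geometric limit argument. Let $h$ be the distance from $c_B$ to $M_A$ and $\Pi(h)$ its angle of parallelism. The lines through $c_B$ asymptotic to $M_A$ make angle at most $2\Pi(h)$ with $L_0$, not $\Pi(h)$, because $L_0$ is not in general the perpendicular from $c_B$ to $M_A$. The width of the elliptic arc is in fact exactly $4\Pi(h)$. This does not affect the conclusion.
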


\begin{proof}

Let $L$ be the line through $c_A$ and $c_B$. From Lemma \ref{lem:decompR}, there are two lines $L_A$ and $L_B$ through $c_A$ and $c_B$ respectively such that
\begin{itemize}

\item $A = \mathcal{R}_{L_A} \circ \mathcal{R}_{L}$ and $B = \mathcal{R}_{L} \circ \mathcal{R}_{B}$;

\item the lines $L$ and $L_A$ meet at an angle $\frac{\theta_A}{2}$ and $L$ and $L_B$ meet at an angle $\frac{\theta_B}{2}$, when suitably oriented.
\end{itemize}

Thus $AB = \mathcal{R}_{L_A} \circ \mathcal{R}_{L_B}$. Recall that a product of two reflections is 

\begin{enumerate}
\item a rotation if their axes meet in $\mathbb{H}$;
\item a parabolic element if their axes do not meet but share an end point in $\partial \mathbb{H}$;
\item a hyperbolic element if their axes do not meet and share no end point in $\partial \mathbb{H}$.
\end{enumerate}

\noindent If one fixes $d$ and $\theta_B$, the lines $L$ and $L_B$ are fixed.  Varying $\theta_A$ in the above construction corresponds to varying $L_A$. Determining the type of $AB$ amounts to working out the angles for which $L_A$ and $L_B$ intersect. 

\vspace{2mm} Since the line $L_A$ is through $c_A$, it is uniquely determined by one of its end points. One sees that $L_A$ and $L_B$ intersect if and only if one of the end points of $L_A$ belongs to one of two intervals in $\partial \mathbb{H}$ cut out by the end points of $L_B$. The Proposition follows from this remark, as varying the angle of $A$ amounts to varying one of the end points of $L_A$.

\end{proof}

\begin{figure}[h]
	  \centering
	\begin{subfigure}{0.4\textwidth}
		  \centering
		\includegraphics[scale=0.3]{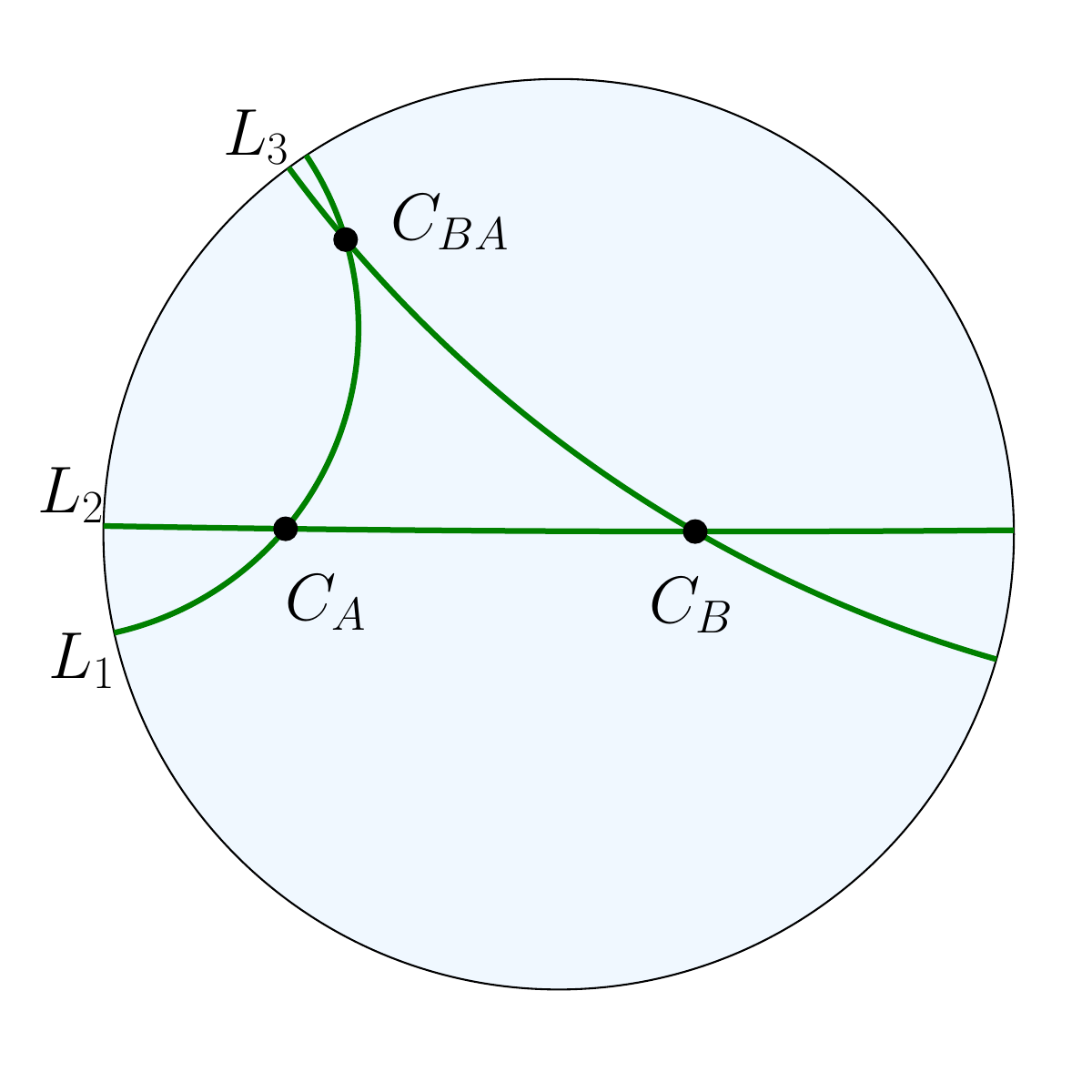}
		\caption{A product of two elliptic that is elliptic}
	\end{subfigure}
	\begin{subfigure}{0.4\textwidth}
		  \centering
		\includegraphics[scale=0.3]{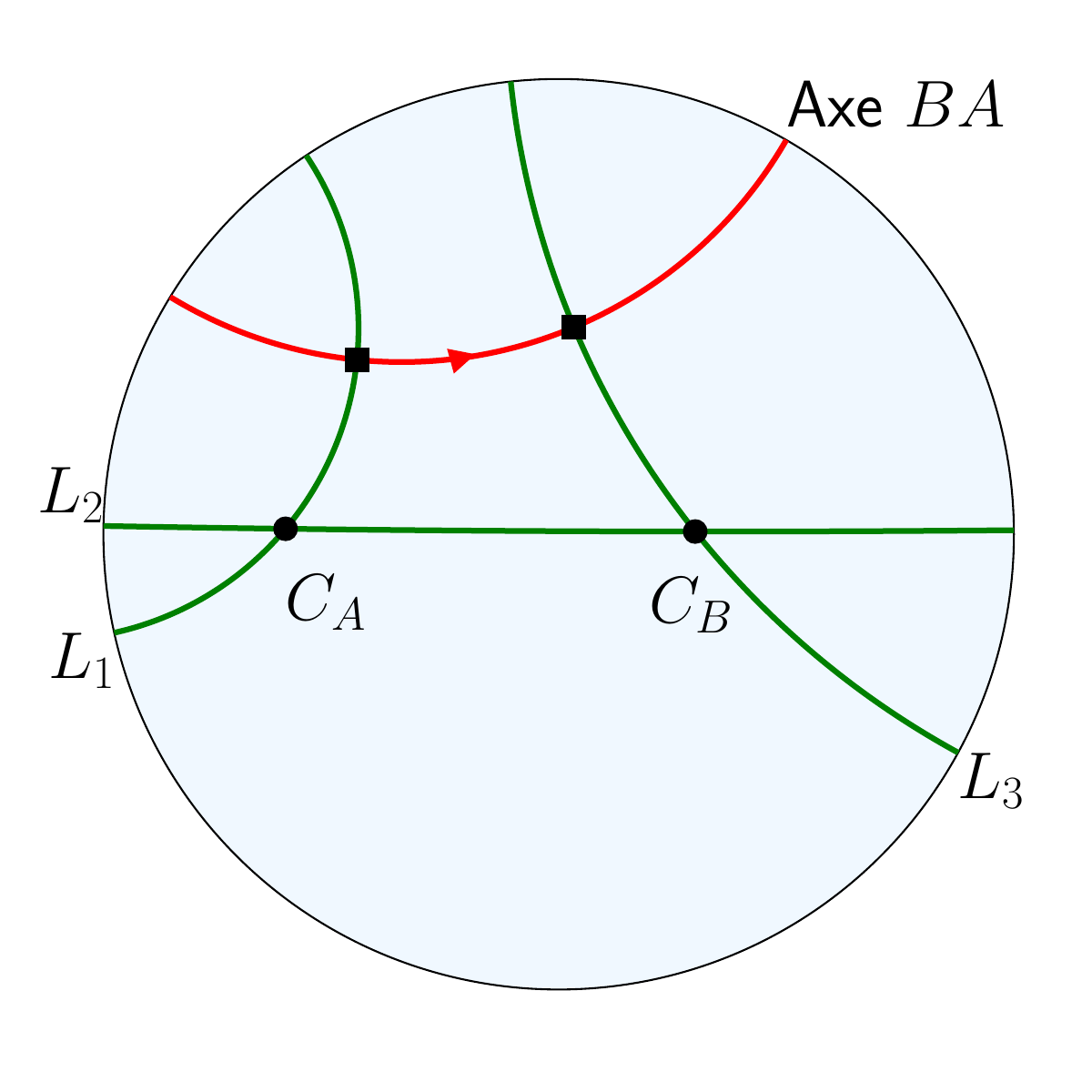}
		\caption{A product of two elliptic that is hyperbolic}
	\end{subfigure}
\end{figure}

\subsection{Product of a rotation and a translation}

In this paragraph, $A$ is a (hyperbolic) translation by $t>0$ along an axis $L$ and $B$ a rotation centred at $p$ of angle $\theta$. We denote by $q$ the orthogonal projection of $p$ onto $L$.

\vspace{2mm} We call $L_2$ the line through $p$ and $q$ and define $L_1$ as the line orthogonal to $L$ such that 

$$ A = \mathcal{R}_{L_1} \circ \mathcal{R}_{L_2} $$ and similarly we call $L_3$ the line through $p$ such that 

$$ B =  \mathcal{R}_{L_2} \circ \mathcal{R}_{L_3}.$$

\begin{lemma}
\label{lem:prodHR}
Let $A$ and $B$ be as above. There exists an open interval $I(t,d) = (\theta_{min}(t,d), \theta_{max}(t,d)) \subset S^1$ such that 

\begin{enumerate}
\item if $\theta \in I(t,d)$, $A\cdot B$ is elliptic;

\item if $\theta \in \{ \theta_{min}(t,d), \theta_{max}(t,d) \}$, $A\cdot B$ is parabolic;

\item if $\theta \in S^1 \setminus \overline{I}$, $A\cdot B$ is hyperbolic, in which case the pair $(A,AB)$ is of type $(\mathcal{H}, \mathcal{H})^+$.

\end{enumerate}

\end{lemma}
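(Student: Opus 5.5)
We will follow the same strategy as in Lemma~\ref{lem:prodRR}: decompose both isometries as products of reflections sharing the common line $L_2$, so that the reflection through $L_2$ cancels. The lines are defined so that
\[ AB = \mathcal{R}_{L_1} \circ \mathcal{R}_{L_2} \circ \mathcal{R}_{L_2} \circ \mathcal{R}_{L_3} = \mathcal{R}_{L_1} \circ \mathcal{R}_{L_3}, \]
where $L_1$ is the line orthogonal to $L$ at signed distance $t/2$ from $q$ (by Lemma~\ref{lem:decompH}), and $L_3$ is the line through $p$ making oriented angle $\theta/2$ with $L_2$ (by Lemma~\ref{lem:decompR}). With $t$ and $d = d(p,q)$ fixed, the line $L_1$ is fixed. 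As $\theta$ varies in $S^1$, $L_3$ rotates around $p$, and each endpoint of $L_3$ sweeps the whole circle $\partial \mathbb{H}$ monotonically, once as $\theta/2$ runs over an interval of length $\pi$.

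By Lemma~\ref{lem:decomptype}, the type of $AB$ depends only on the relative position of $L_1$ and $L_3$. The endpoints of $L_1$ split $\partial\mathbb{H}$ into two arcs. The point $p$ lies in one of the two half-planes bounded by $L_1$; we will check that $p \notin L_1$ whenever $t>0$. Hence $L_3$ meets $L_1$ exactly when the endpoint of $L_3$ lying on the far side lands in the open arc cut out by $L_1$ on that side. The set of directions at $p$ pointing into this arc is an open interval of angles. Its endpoints correspond to $L_3$ being asymptotic to $L_1$, which is the parabolic case, and the complementary closed set corresponds to disjoint, non-asymptotic lines, which is the hyperbolic case. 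Translating angles of $L_3$ into $\theta$, which doubles them, produces the interval $I(t,d)$ and items (1)--(2). Some care is needed here, since $\theta/2$ and $\theta/2 + \pi$ give the same line. The doubling identifies this ambiguity correctly, so the construction is well-defined on $S^1 = \mathbb{R}/2\pi\mathbb{Z}$.

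The main obstacle is the refinement in (3): that when $AB$ is hyperbolic, the axes of $A$ and $AB$ point in the same direction. We will locate the axis of $AB$. It is the common perpendicular $M$ of $L_1$ and $L_3$, and $AB$ translates along $M$ in the direction from $L_3$ towards $L_1$. Similarly, $A$ translates along $L$ (the common perpendicular of $L_2$ and $L_1$) in the direction from $L_2$ towards $L_1$. We plan to argue by continuity in $\theta$ over each component of $S^1\setminus \overline{I}$. Along such a component, $L_3$ stays disjoint from $L_1$ and on the same side as $L_2$, namely the side containing $p$. Therefore $L_1$ separates both $L_2$ and $L_3$ from the same half-plane, and both translations push towards that half-plane. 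Concretely, the attracting fixed points of $A$ and of $AB$ both lie among the endpoints of $L_1$'s far side, while the repelling fixed points lie on the near side. With the convention used to define $(\mathcal{H},\mathcal{H})^{\pm}$ via the circular order of fixed points, this gives type $(\mathcal{H},\mathcal{H})^+$. Before concluding we will check this against a limiting configuration, for example $\theta \to 0$ where $B\to \mathrm{id}$ and $AB \to A$, since that limit is plainly $(\mathcal{H},\mathcal{H})^+$.
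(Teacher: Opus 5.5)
Your proposal is correct and follows the paper's proof. Like the paper, you cancel $\mathcal{R}_{L_2}$ to get $AB=\mathcal{R}_{L_1}\circ\mathcal{R}_{L_3}$ and read off the type of $AB$ from how $L_3$, rotating about $p$, meets the fixed line $L_1$; you then get $(\mathcal{H},\mathcal{H})^+$ because $L_2$ and $L_3$ lie on the same side of $L_1$, so the repelling points of $A$ and $AB$ sit on one arc cut out by $L_1$ and the attracting points on the other. The continuity-in-$\theta$ plan and the $\theta\to 0$ check are unnecessary given this direct argument, but you should state explicitly, as the paper does in Lemma~\ref{lem:prodHH}, that the two axes are distinct lines perpendicular to $L_1$ and hence disjoint, so that the $\pm$ refinement applies.
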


\begin{proof}
The proof is very similar to that of Lemma \ref{lem:prodRR}. After having written $AB = \mathcal{R}_{L_1} \circ\mathcal{R}_{L_3}$, we see that the type of $AB$ is determined by whether $L_1$ and $L_3$ intersect or not. When $\theta$ varies, $L_3$ runs over all lines through $p$. We thus see that there is an open interval $I \subset \mathbb{R}/2\pi \mathbb{Z}$ of angles for which $L_1$ and $L_3$ intersect. The extremities of $I$ correspond to $L_1$ and $L_3$ intersecting in $\partial \mathbb{H}$ and finally if $\theta \in  \mathbb{R}/2\pi \mathbb{Z} \setminus \overline{I}$, $L_1 \cap L_3 = \emptyset$. The Lemma then follows from Lemma \ref{lem:decomptype}.

\vspace{2mm} Since $A  = \mathcal{R}_{L_1} \circ \mathcal{R}_{L_2} $ and 
$AB = \mathcal{R}_{L_1} \circ\mathcal{R}_{L_3}$, both repelling points of $A$
and $AB$ are on the same of one of the two arcs of $\partial \mathbb{H}$ defined by the end points of $L_1$, whereas both repelling points are on the other. This implies that the pair $(A,AB)$ is of type $(\mathcal{H}, \mathcal{H})^+$.
\end{proof}

\begin{figure}[h]
	  \centering
	\begin{subfigure}{0.4\textwidth}
		  \centering
		\includegraphics[scale=0.3]{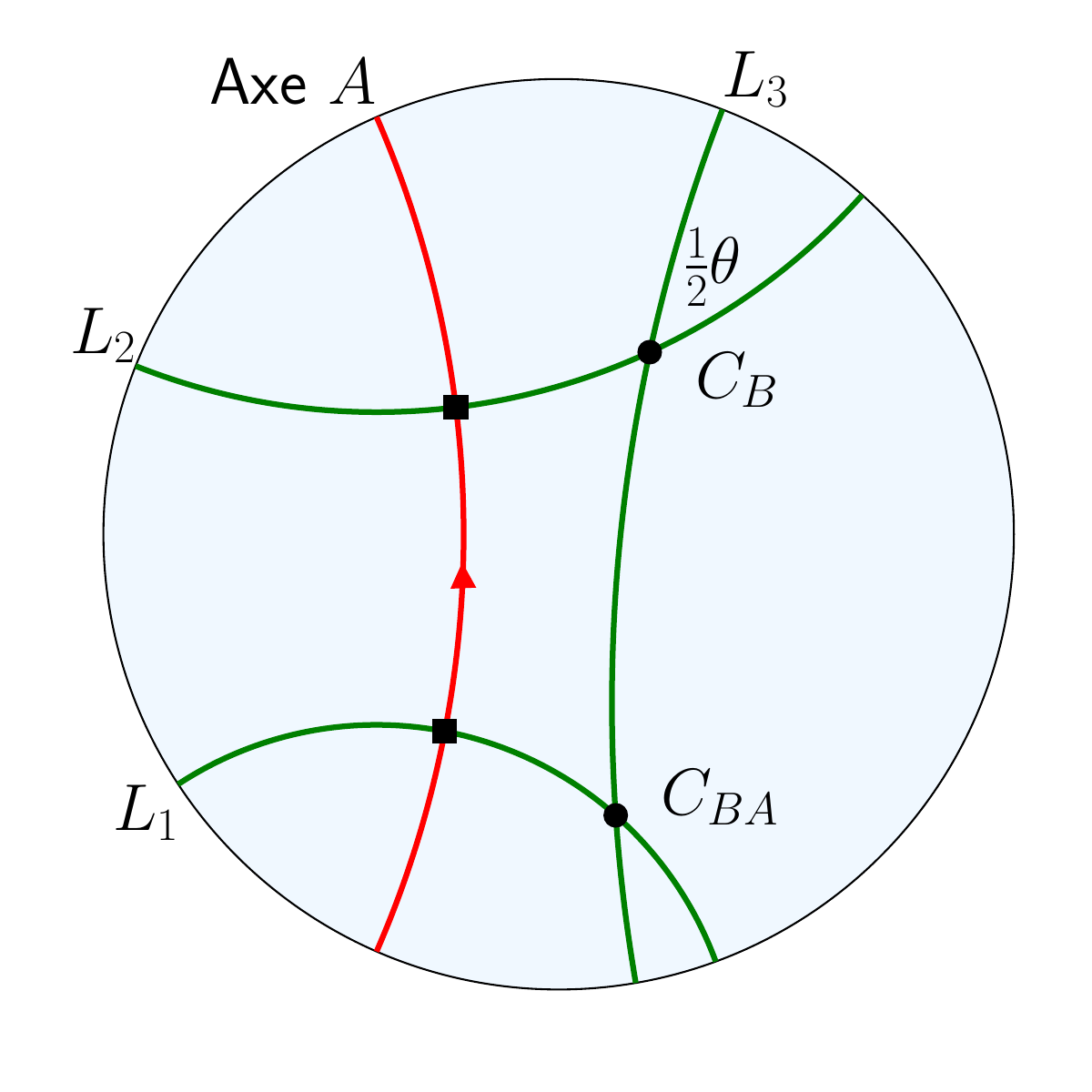}
		\caption{A product of a hyperbolic and a elliptic that is
		elliptic}
	\end{subfigure}
	\begin{subfigure}{0.4\textwidth}
		  \centering
		\includegraphics[scale=0.3]{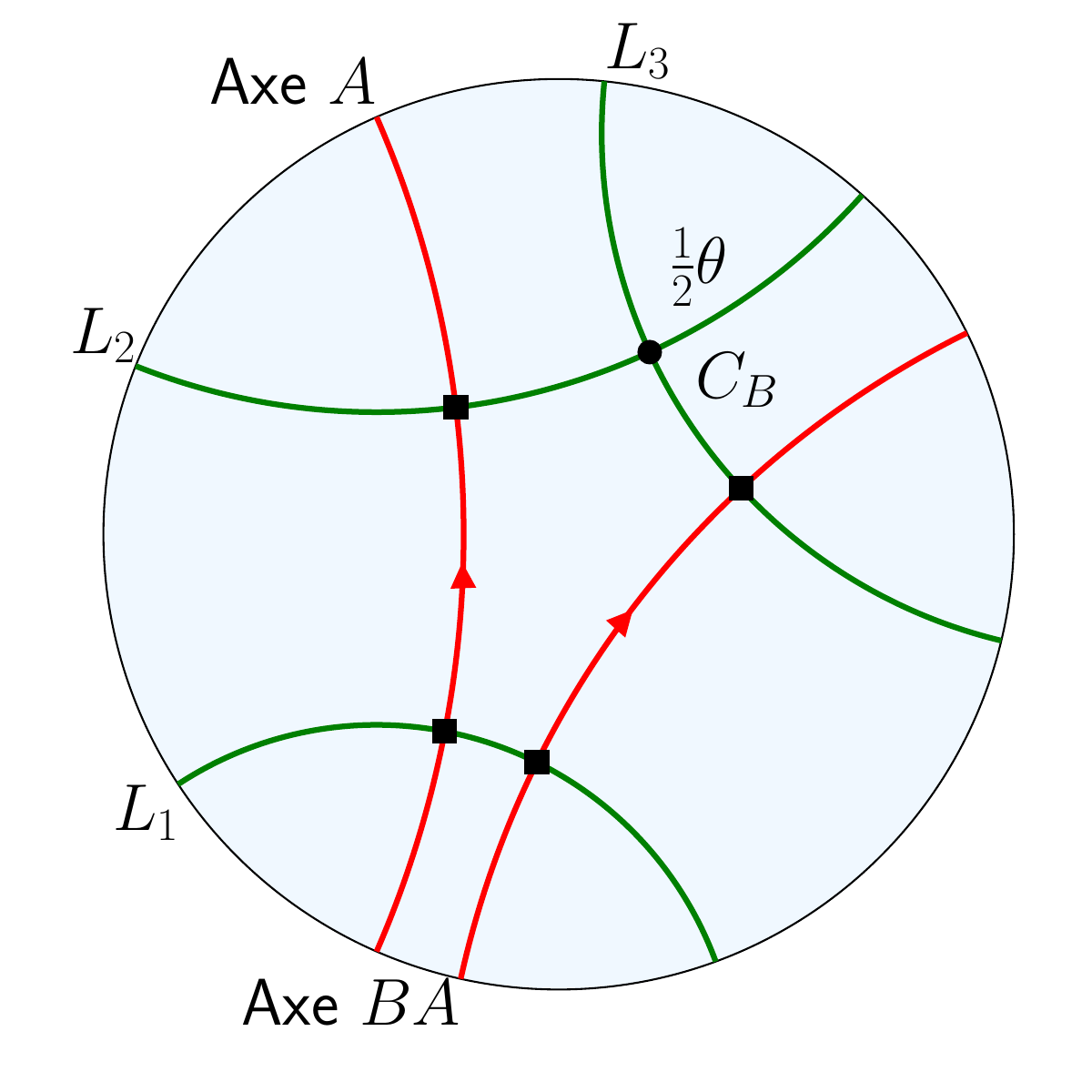}
		\caption{A product of a hyperbolic and a elliptic that is hyperbolic}
	\end{subfigure}
	\caption{Product of a hyperbolic and an elliptic}
	\label{fig:prod_HE}
\end{figure}

\subsection{Product of two translations}
In this paragraph we assume that both $A$ and $B$ are of hyperbolic type. Precisely

\begin{itemize}
\item $A$ is hyperbolic of axis $L_A$ and translation length $t_A$;

\item $B$ is hyperbolic of axis $L_B$ and translation length $t_B$;

\item we assume that $L_A \cap L_B = \emptyset$ and we denote by $L_2$ the line intersecting both $L_A$ and $L_B$ orthogonally.
\end{itemize}

Under this set of hypotheses, we have two potential configurations depending on the cyclic ordering of the fixed points of $A$ and $B$ on $\partial \mathbb{H}$. Either the attracting fixed points are consecutive on $\partial \mathbb{H}$ or attracting and repelling fixed points alternate. In what follows we make the assumption that we are in the latter case. In the terminology introduced earlier, this implies that the pair $(A,B)$ is of type $(\mathcal{H} \times \mathcal{H})^-$.

\begin{figure}[h]
	  \centering
		\includegraphics[scale=0.3]{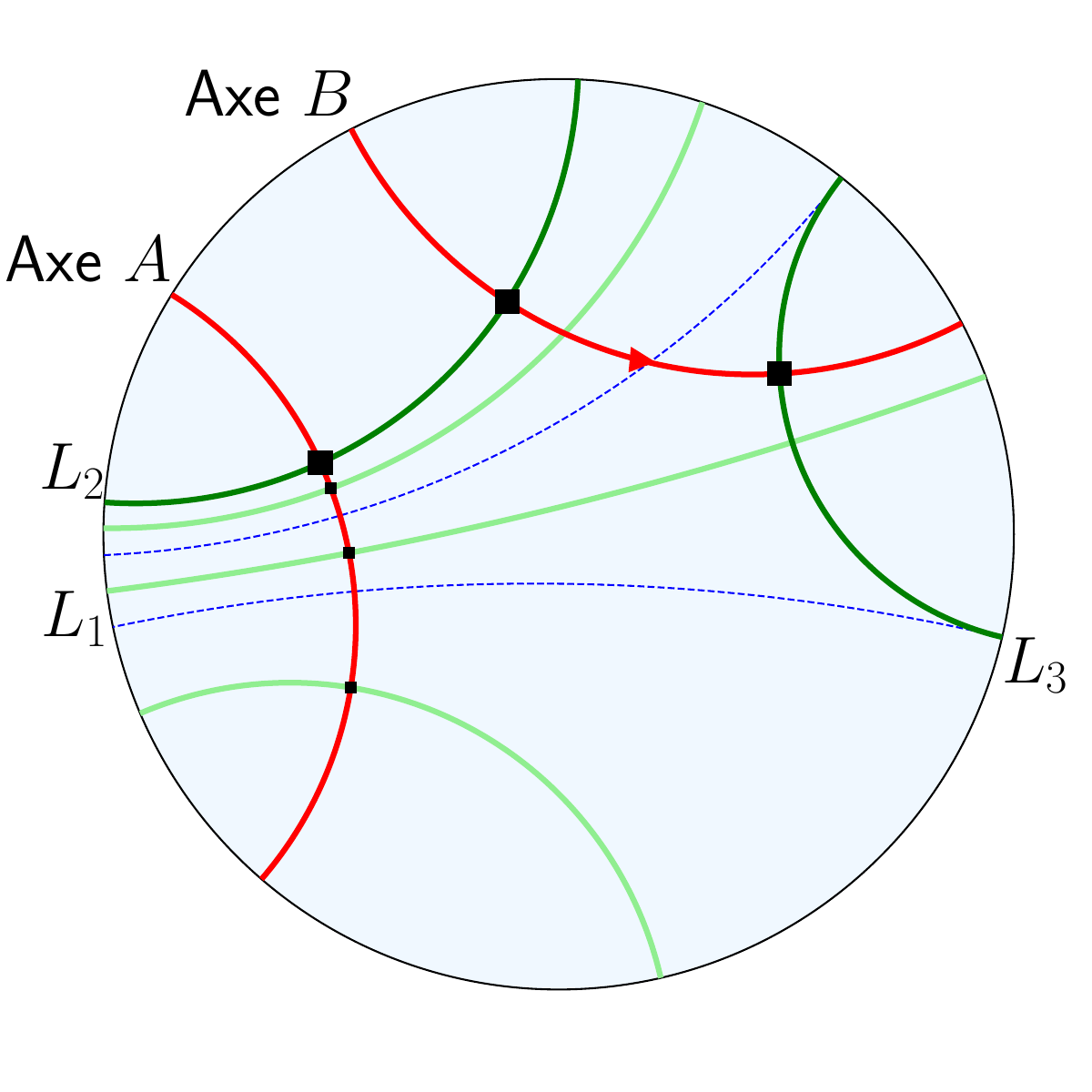}
		\caption{The three different cases for $ (H,H)^{-} $ }
\end{figure}

In this case, we call $L_1$ the line orthogonal to $L_A$ such that $$A = \mathcal{R}_{L_1} \circ \mathcal{R}_{L_2}$$ and $L_3$ the line such that 

$$ B = \mathcal{R}_{L_2} \circ \mathcal{R}_{L_3} $$ so that $A \cdot B = \mathcal{R}_{L_1} \circ \mathcal{R}_{L_3}.$

\begin{lemma}
\label{lem:prodHH}
Let $A$ and $B$ be a pair of matrices of type $(\mathcal{H}, \mathcal{H})^-$, and denote by $t_A, t_B \in \mathbb{R}^*_+$ the respective translation lengths of $A$ and $B$, and $d$ the distance from $L_A$ to $L_B$. There exists $t_1 = t_1(t_B,d)$ and $t_2 = t_2(t_B,d) >0$ such that the following holds

\begin{enumerate}
\item if $t_A \in (0, t_1)$, then $AB$ is hyperbolic and the pair $(A,AB)$ is of type $(\mathcal{H}, \mathcal{H})^-$;
\item if $t_A \in (t_1, t_2)$, then $AB$ is elliptic;
\item if $t_A \in (t_2, +\infty)$, then $AB$ is hyperbolic and the pair $(A,AB)$ is of type $(\mathcal{H}, \mathcal{H})^+$.
\end{enumerate}

\end{lemma}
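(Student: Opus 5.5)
We follow the same strategy as in Lemmas~\ref{lem:prodRR} and~\ref{lem:prodHR}. Write $AB = \mathcal{R}_{L_1}\circ\mathcal{R}_{L_3}$ as in the setup. By Lemma~\ref{lem:decomptype}, the type of $AB$ is decided by the relative position of $L_1$ and $L_3$: they either intersect in $\mathbb{H}$, share an ideal endpoint, or are disjoint in $\overline{\mathbb{H}}$.

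We fix $t_B$ and $d$. Then $L_2$, $L_B$ and $L_3$ are fixed, and $L_3$ is the perpendicular to $L_B$ at distance $t_B/2$ from $p_B = L_2\cap L_B$. The line $L_1$ is the perpendicular to $L_A$ at signed distance $t_A/2$ from $p_A = L_2\cap L_A$, measured along $L_A$. The sign is set by the direction of translation of $A$.

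The $(\mathcal{H},\mathcal{H})^-$ hypothesis means the attracting and repelling points alternate. We normalize orientations so that $L_1$ and $L_3$ sit on the same side of $L_2$: after reflection across $L_2$, the alternation puts the displacement of $L_1$ and that of $L_3$ in the same half-plane bounded by $L_2$.

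Next we parametrize by $s = t_A/2 \in (0,\infty)$. Let $L_1(s)$ be the geodesic perpendicular to $L_A$ at the point at distance $s$ from $p_A$. These geodesics form a monotone family foliating the half-plane on that side of $L_2$. Their ideal endpoints move monotonically and converge to the attracting endpoint $a^+$ of $L_A$ as $s\to\infty$.

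The line $L_3$ lies in this half-plane and is disjoint from $L_A$, since $c>2$ gives $L_A\cap L_B=\emptyset$. So $L_3$ separates the half-plane, and the family $L_1(s)$ must cross it. Three regimes follow:
\begin{itemize}
\item For $s$ small, $L_1(s)$ is close to $L_2$ and disjoint from $L_3$ (and from $L_B$).
\item For an intermediate range, the endpoints of $L_1(s)$ interleave with those of $L_3$, so the two lines cross.
\item For $s$ large, $L_1(s)$ has passed beyond $L_3$, and again they are disjoint.
\end{itemize}
Each transition happens exactly once, when an endpoint of $L_1(s)$ passes an endpoint of $L_3$, and monotonicity of the endpoints gives this uniqueness. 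This produces $t_1 < t_2$ such that $AB$ is hyperbolic on $(0,t_1)$, elliptic on $(t_1,t_2)$ and hyperbolic on $(t_2,\infty)$, with parabolics at the boundary values.

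To compute the refined types we argue as at the end of Lemma~\ref{lem:prodHR}. We have $A = \mathcal{R}_{L_1}\mathcal{R}_{L_2}$ and $AB = \mathcal{R}_{L_1}\mathcal{R}_{L_3}$, and both share the reflection $\mathcal{R}_{L_1}$. The attracting and repelling points of $\mathcal{R}_{L}\mathcal{R}_{M}$ lie in the arcs cut out by $L$ and $M$, on the side determined by which line is applied first. In regime (3), $L_2$ and $L_3$ both lie on the same side of $L_1$, namely the side containing $p_A$. The argument of Lemma~\ref{lem:prodHR} then applies verbatim: the repelling points of $A$ and $AB$ lie on one arc cut by $L_1$ and the attracting points on the other, which is type $(\mathcal{H},\mathcal{H})^+$.

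In regime (1), $L_1$ lies between $L_2$ and $L_3$. Then the axis of $AB$, the common perpendicular of $L_1$ and $L_3$, is on the opposite side of $L_1$ from $L_A$, and the translation directions alternate. This gives $(\mathcal{H},\mathcal{H})^-$.

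The main obstacle is making this last orientation bookkeeping rigorous. It is easy to confuse which of $L_2$ and $L_3$ lies between, and to mis-sign the translation directions. We would handle it by checking the cyclic order of the four ideal points in the upper half-plane model, with $L_2$ taken as the imaginary axis.
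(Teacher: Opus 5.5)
Your route is the paper's: write $AB=\mathcal{R}_{L_1}\circ\mathcal{R}_{L_3}$, slide $L_1$ along $L_A$, and decide the type from where $L_1$ sits relative to $L_3$ and $L_2$. Your normalization that $L_1,L_3$ lie on the same side of $L_2$ is correct, and so is the monotone-endpoint argument for each transition that actually occurs.

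\textbf{The gap: existence of the third regime.} You justify it by saying $L_3$ is disjoint from $L_A$ because $L_A\cap L_B=\emptyset$. This does not follow, and it is false in general. Take $L_2$ the imaginary axis, $L_A,L_B$ the semicircles of radii $1,e^d$, and $a^+=1$. Then $L_1(s)$ has endpoints $\tanh(s/2),\coth(s/2)$, and $L_3$ has endpoints $e^d\tanh(t_B/4),\ e^d\coth(t_B/4)$. So $L_3$ crosses $L_A$ as soon as $e^d\tanh(t_B/4)<1$, for example when $t_B$ is small and $L_3$ is close to $L_2$. In that case the trichotomy survives, since $L_1(s)$ ends up nested under $L_3$.

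What regime (3) really requires is that $a^+$, the limit of the endpoints of $L_1(s)$, is not an endpoint of $L_3$. This fails exactly when $e^d\tanh(t_B/4)=1$, i.e.\ $\coth(t_B/2)=\cosh d$, i.e.\ $B(a^+)=a^-$. In that case $\tfrac12|\mathrm{tr}(AB)|=\cosh(t_B/2)\,e^{-t_A/2}$. Hence $AB$ is elliptic for every $t_A>t_1=2\log\cosh(t_B/2)$, and no finite $t_2$ exists.

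So this case has to be excluded by hypothesis. The paper's sketch is silent here too. The case is, however, ruled out by the paper's standing assumption that $\langle A,B\rangle$ contains no parabolics. Indeed, $B(a^+)=a^-$ makes $A$ and $BAB^{-1}$ share exactly one fixed point, so $[A,BAB^{-1}]$ is a nontrivial parabolic.

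\textbf{The regime (1) step.} Your sentence there is wrong as written: $L_A$ and the axis of $AB$ are both perpendicular to $L_1$, so both cross it. The orientation bookkeeping you flagged can be done uniformly:
\begin{itemize}
\item For disjoint $L,M$, the map $\mathcal{R}_L\circ\mathcal{R}_M$ translates along their common perpendicular from $M$ towards $L$.
\item Hence the attracting point of $A$ lies on the side of $L_1$ away from $L_2$, and that of $AB$ on the side of $L_1$ away from $L_3$.
\item Both axes are perpendicular to $L_1$, so they are disjoint and each has one endpoint on each side of $L_1$.
\end{itemize}
Therefore $(A,AB)$ is of type $(\mathcal{H},\mathcal{H})^+$ iff $L_2$ and $L_3$ lie on the same side of $L_1$, and of type $(\mathcal{H},\mathcal{H})^-$ iff $L_1$ separates them. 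This single criterion gives the refined types in both (1) and (3).
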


\begin{proof}

Recall that $A = \mathcal{R}_{L_1} \circ \mathcal{R}_{L_2}$ and $A \cdot B =
\mathcal{R}_{L_1} \circ \mathcal{R}_{L_3}$. $L_1$ is a line that intersects
$L_A$ in a point which is at distance $\frac{t_A}{2}$ to $L_2$, so when $t_A$ is small enough the three lines $L_1, L_2, L_3$ do not intersect.
We thus see that the attracting fixed points of $A$ and the repelling one of $B$
(on $\partial \mathbb{H}$) are on the same side of $L_1$ and conversely that the attracting fixed points of $B$ and the repelling one of $A$ are on the other side. This implies that $(A,AB)$ is of type $(\mathcal{H}, \mathcal{H})^-$ if and only if their axes do not intersect. This is indeed the case since $L_1$ is orthogonal to both (and two distinct lines perpendicular to a same third line cannot intersect in $\mathbb{H}$).

\begin{figure}[h]
	  \centering
		\includegraphics[scale=0.3]{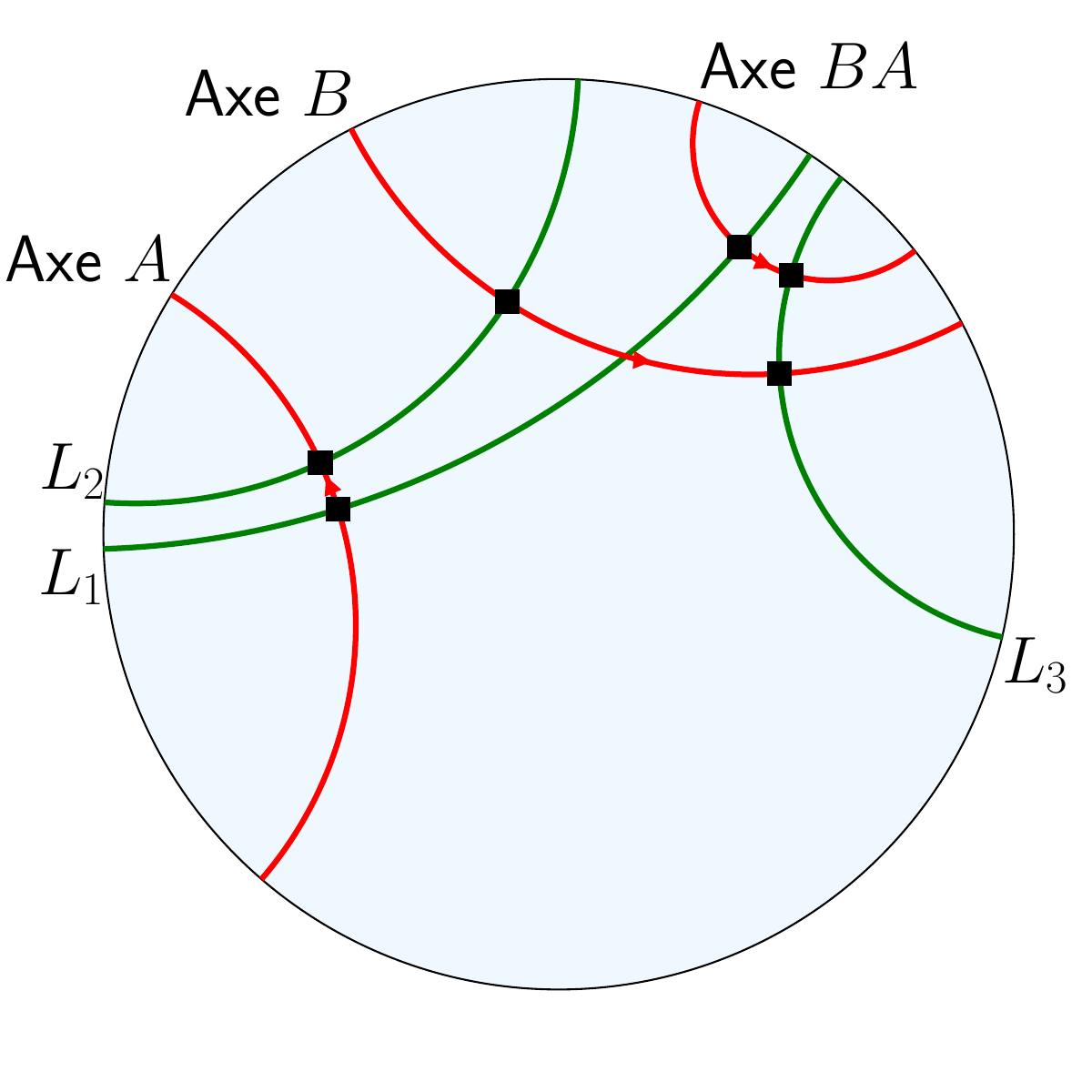}
		\caption{$(\Hyp,\Hyp)^{-}$: First case}
\end{figure}

\vspace{2mm} The configuration described above happens for all values of $t_A$ less than a certain constant $t_1$ for which $L_1$ shares an end point with $L_3$. For values of $t_A$ larger than $t_1$ but less than a certain constant $t_2$, $L_1$ and $L_3$ intersect in which case $AB$ is elliptic.

\begin{figure}[h]
	  \centering
		\includegraphics[scale=0.3]{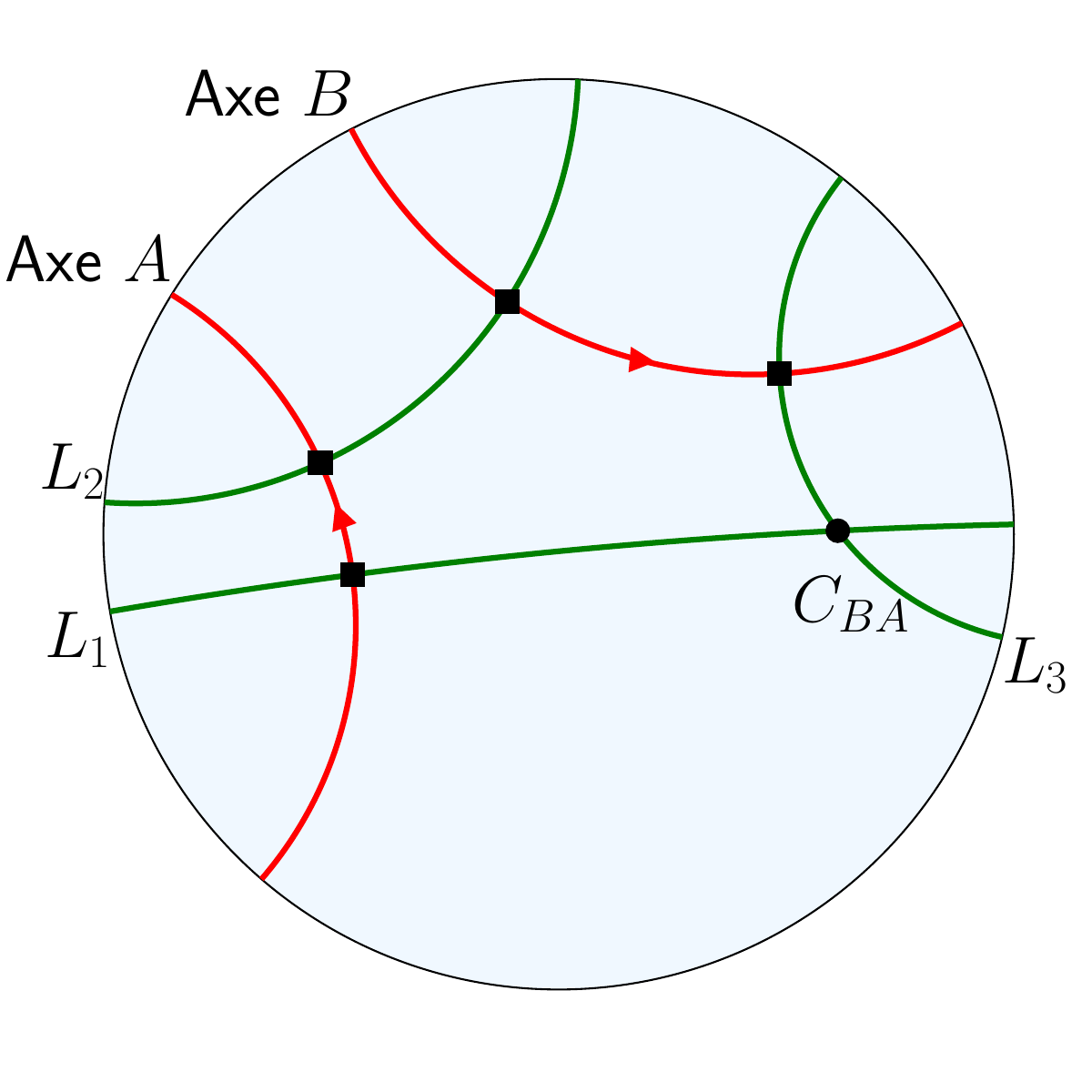}
		\caption{$(\Hyp, \Hyp)^{-}$: Second case}
\end{figure}

\vspace{2mm} Finally, for $t_A > t_2$, $L_1$ and $L_3$ no longer intersect. A
reasoning similar to that of the case $t_1 < t_A$, for the configuration shown in the Figure below, shows that $(A,AB)$ is of type $(\mathcal{H}, \mathcal{H})^+$. 

\begin{figure}[h]
	\centering
	\includegraphics[scale=0.3]{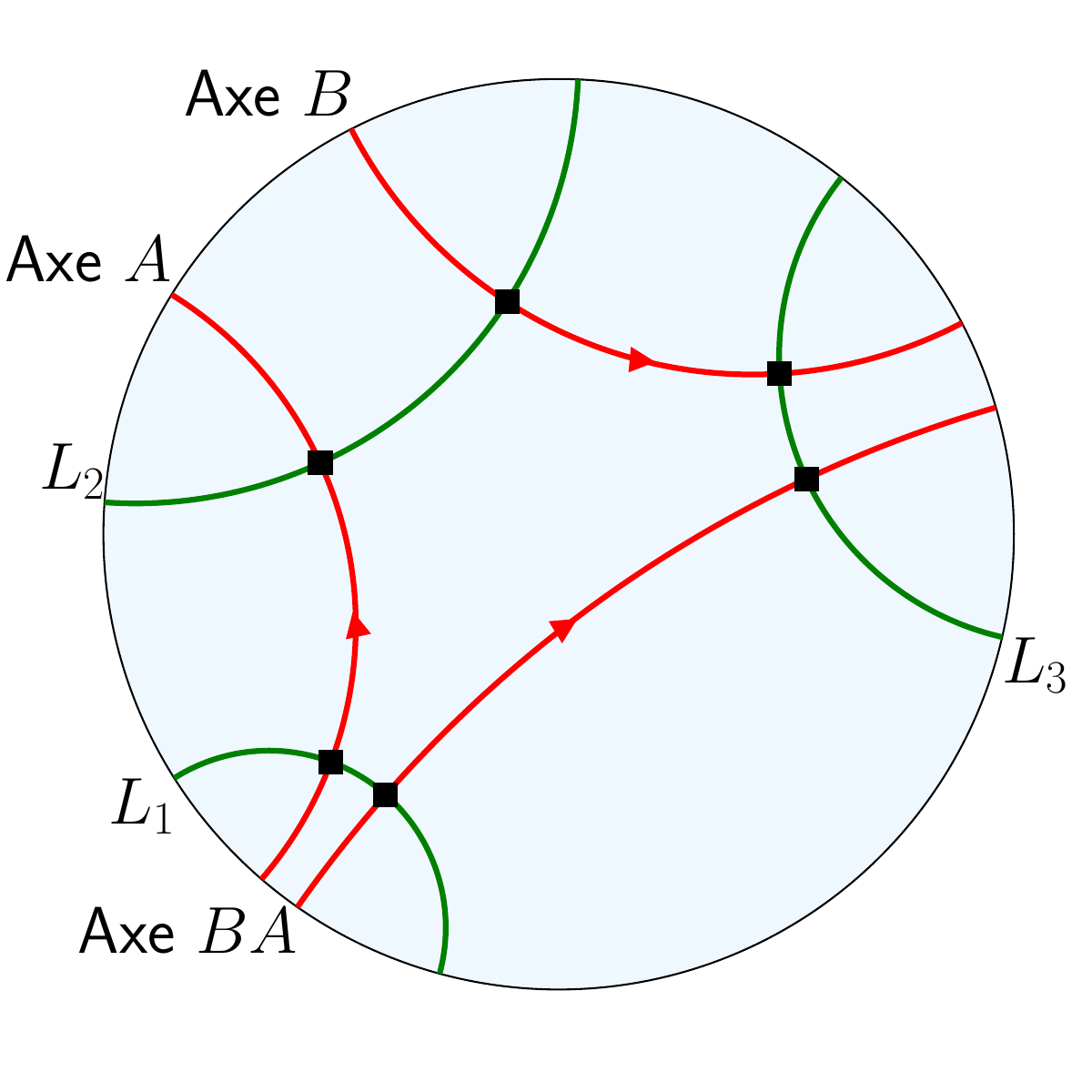}
	\caption{$(\Hyp, \Hyp)^{-}$: third case}\label{fig:fucshian}
\end{figure}

\end{proof}

\section{Technical lemmata}\label{sec:tech_lem}

\fbox{
\parbox{\textwidth}{ 
We recall that we have made the standing assumption that the matrices we are
dealing with \textit{and} the group they generate do not contain any parabolic
elements or a rotation of angle a rational multiple of $2\pi$.
}}

\vspace{2mm}

Based upon the lemmata from the previous section, we characterise the possible
ways in which a pair $(A,B)$ transforms under renormalization. We recall the
following notation, $\mathcal{E} := \{ M \in \mathrm{PSL}(2, \mathbb{R}) \ | \ M
\ \text{is elliptic} \}$ and $\mathcal{H} := \{ M \in \mathrm{PSL}(2, \mathbb{R}) \ | \ M \ \text{is hyperbolic} \}$.

\vspace{2mm} A step of renormalization consists formally in replacing a pair of
matrices $(A,B)$ with $(A, A^nB)$ (if renormalizing on the left) or $(A B^n, B)$
(if renormalizing on the right). Since these two processes are completely symmetrical, we will only concern ourselves with how the pair $(A,B)$ transforms under 

$$ (A, B) \longmapsto (A, A^n B) $$ and leave it to the reader to work out corresponding statements for the case $ (A, B) \longmapsto (AB^n,B)$.

\subsection{From $(\Ell,  \Ell)$.}

In this paragraph, we prove the following informal statement.

\begin{center}

If we start from a pair $(A,B) \in \mathcal{E} \times \mathcal{E}$ for which we do one step of accelerated renormalization, then we land infinitely many times on both configurations $( \mathcal{E}, \mathcal{E})$ and $ (\mathcal{E},\mathcal{H})$.
\end{center}

\begin{proposition}
\label{prop:EE}

Assume $(A,B) \in \mathcal{E} \times \mathcal{E}$ and that the respective angles of $A$ and $B$ are irrational multiples of $2 \pi$. Then 

\begin{enumerate}
	\item there are infinitely many $n \in \mathbb{N}$ such that $(A, BA^n) \in  \mathcal{E} \times \mathcal{E}$;
	\item there are infinitely many $n \in \mathbb{N}$ such that  $(A, BA^n) \in \mathcal{E} \times \mathcal{H}$.
\end{enumerate}

\end{proposition}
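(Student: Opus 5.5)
Since $A$ stays elliptic under $(A,B)\mapsto(A,BA^n)$, only the type of $BA^n$ matters. Here $A^n$ is a rotation about the fixed centre $c_A$ with angle $n\theta_A$ mod $2\pi$, while $B$ is a fixed rotation about $c_B$. The plan is to apply Lemma~\ref{lem:prodRR} to the pair $(B,A^n)$, with $B$ in the role of the first factor. Set $d=d(c_A,c_B)$ and $\alpha=\alpha(d,\theta_B)\in(0,2\pi)$. Lemma~\ref{lem:prodRR} then says that $BA^n$ is elliptic when $n\theta_A \bmod 2\pi$ lies in $(-\alpha,\alpha)$, and hyperbolic when it lies in $(\alpha,2\pi-\alpha)$. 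Products are unchanged by replacing $(A,B)$ by the conjugate pair, and the order of the factors does not matter: $BA^n$ is conjugate to $A^nB$, and conjugation preserves the type. So we may apply the lemma with either ordering.

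The key step is equidistribution. Since $\theta_A/2\pi$ is irrational, the sequence $n\theta_A \bmod 2\pi$ is dense in $\R/2\pi\Z$, indeed equidistributed by Weyl. Hence it visits each nonempty open arc infinitely often. The arc $(-\alpha,\alpha)$ is nonempty because $\alpha>0$, which gives infinitely many $n$ with $(A,BA^n)\in\Ell\times\Ell$ and proves (1). For (2), we use the hyperbolic arc $(\alpha,2\pi-\alpha)$. If it is nonempty, that is if $\alpha<\pi$, density again gives infinitely many $n$ with $BA^n$ hyperbolic.

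The main obstacle is to make sure the hyperbolic arc is nonempty, i.e.\ $\alpha(d,\theta_B)<\pi$. Lemma~\ref{lem:prodRR} only says that $\alpha<2\pi$ and that $\alpha\to0$ as $d\to\infty$. I would return to the reflection picture in its proof: $A^n=\mathcal R_{L_A}\circ\mathcal R_L$ and $B=\mathcal R_L\circ\mathcal R_{L_B}$, where $L=(c_Ac_B)$. As $n$ varies, $L_A$ rotates about $c_A$, and $BA^n$ is hyperbolic exactly when $L_A$ and $L_B$ are ultraparallel. Since $c_A\notin L_B$ (as $L_B$ meets $L$ only at $c_B$, for $\theta_B\ne0,\pi$), there is an open set of lines through $c_A$ that are disjoint from $L_B$ and share no endpoint with it. An explicit such line is obtained by choosing both endpoints of $L_A$ in the same component of $\partial\mathbb H\setminus\partial L_B$. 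This open set corresponds to an open set of angles $n\theta_A$, and density then gives infinitely many hyperbolic $BA^n$. The standing genericity assumption (no parabolics) excludes the boundary cases.

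If $d=0$, i.e.\ $c_A=c_B$, the argument for (2) breaks down, since all products are elliptic. This case has to be excluded. It amounts to $A$ and $B$ commuting, so $c=\tr[A,B]=2$, which is ruled out by our standing assumption $c>2$. I would record this remark at the start of the proof.
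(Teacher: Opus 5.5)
Your proof is correct and follows the paper's argument: $(n\theta_A)$ is dense in $\mathbb{R}/2\pi\mathbb{Z}$, and Lemma~\ref{lem:prodRR} is applied to the fixed rotation $B$ and the varying rotation $A^n$ to get infinitely many elliptic and infinitely many hyperbolic products $BA^n$. Your two additions fill in points the paper's proof passes over silently, since the lemma as stated only gives $\alpha<2\pi$ and not $\alpha<\pi$: first, you show via the reflection picture that the hyperbolic set of angles is a nonempty open set (a line through $c_A$ ultraparallel to $L_B$ exists because $c_A\notin L_B$); second, you exclude $c_A=c_B$, which would force $A$ and $B$ to commute and hence $\mathrm{tr}[A,B]=2$.
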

\begin{proof}

Since $A$ is a rotation of angle $\theta_A$, $A^n$ is a rotation of angle $n \cdot \theta_A \in \mathbb{R}/ 2\pi \mathbb{Z}$. Since $\theta_A$ is assumed to be an irrational multiple of $2\pi$, the sequence $n \cdot \theta_A$ is dense in $\mathbb{R}/ 2\pi \mathbb{Z}$. 

\vspace{2mm} With the notation of Lemma \ref{lem:prodRR}, we see that there are infinitely many $n$ such that $n \cdot \theta_A \in (-\alpha(d,\theta_B), \alpha(d,\theta_B))$ and infinitely many in $(\alpha(d,\theta_B), 2\pi - (-\alpha(d,\theta_B))$. For these $n$s, applying Lemma \ref{lem:prodRR} we get that $BA^n$ is a rotation or hyperbolic respectively. This proves Proposition \ref{prop:EE}.

\end{proof}

\subsection{From $(\mathcal{E}, \mathcal{H})$.}

In this paragraph, we prove the following informal statement.

\begin{center}

If we start from a pair $(A,B) \in \mathcal{E} \times \mathcal{H}$ for which we do one step of accelerated renormalization, then we land infinitely many times on both configurations $( \mathcal{E}, \mathcal{E})$ and $ (\mathcal{E},\mathcal{H})$.
\end{center}

\begin{proposition}

Assume $(A,B) \in \mathcal{E} \times \mathcal{H}$ and that the angle of $A$ is an irrational multiple of $2 \pi$. Then 

\begin{enumerate}
\item there are infinitely many $n \in \mathbb{N}$ such that $(A, BA^n) \in  \mathcal{E} \times \mathcal{E}$;

\item there are infinitely many $n \in \mathbb{N}$ such that  $(A, BA^n) \in \mathcal{E} \times \mathcal{H}$.
\end{enumerate}

\end{proposition}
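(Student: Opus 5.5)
The plan is to copy the argument used for Proposition~\ref{prop:EE}, this time using Lemma~\ref{lem:prodHR} in place of Lemma~\ref{lem:prodRR}. Let $B$ be hyperbolic with axis $L$ and translation length $t>0$. Let $A$ be a rotation with centre $p$ and angle $\theta_A$, an irrational multiple of $2\pi$.

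Then $A^n$ is a rotation about the same centre $p$, with angle $n\theta_A \bmod 2\pi$. Note that $p\notin L$: if $p$ lay on $L$, we would be in a degenerate situation, which the genericity assumption should exclude. Write $d$ for the distance from $p$ to $L$ and $q$ for the orthogonal projection of $p$ onto $L$. Both are independent of $n$.

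We apply Lemma~\ref{lem:prodHR} to the pair (translation $B$, rotation $A^n$). The lemma is stated for the product $B\cdot A^n$ with the translation on the left, which is exactly the word $BA^n$ we need. It gives a fixed open interval $I(t,d)\subset S^1$ of angles. If $n\theta_A\in I(t,d)$, then $BA^n$ is elliptic. If $n\theta_A\in S^1\setminus\overline{I(t,d)}$, then $BA^n$ is hyperbolic.

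Since $n\theta_A$ is dense (indeed equidistributed) in $\R/2\pi\Z$, each of these two sets is visited by infinitely many $n$, provided both have nonempty interior. Meanwhile the first coordinate $A$ stays elliptic throughout. The parabolic boundary angles cannot occur, since the semigroup is assumed to have no parabolics. This gives both (1) and (2).

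The main obstacle is checking that $I(t,d)$ is a proper, nonempty open arc, i.e.\ that neither the elliptic nor the hyperbolic region is empty or reduced to boundary points. I would verify this with the reflection picture from the proof of Lemma~\ref{lem:prodHR}. As $\theta$ ranges over $S^1$, the line $L_3$ through $p$ sweeps out all lines through $p$, while $L_1$ is a fixed line orthogonal to $L$ at distance $t/2$ from $q$.
\begin{itemize}
\item $L_3=L_2$ (the line $pq$) is disjoint from $L_1$, because both are perpendicular to $L$. This gives an open set of hyperbolic angles near $\theta=0$.
\item The line through $p$ and a point of $L_1$ meets $L_1$. Lines through $p$ close to it still meet $L_1$, which gives an open set of elliptic angles.
\end{itemize}
Both regions are therefore open and nonempty, and density of $(n\theta_A)$ concludes the proof.
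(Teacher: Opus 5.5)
Your proof is correct and follows the paper's argument: $A^n$ is a rotation about the fixed centre $p$ by the dense angles $n\theta_A$, and Lemma~\ref{lem:prodHR}, applied to the translation $B$ and the rotation $A^n$, sorts these angles into the open elliptic arc $I(t,d)$ and the open hyperbolic region $S^1\setminus\overline{I(t,d)}$, each of which is hit infinitely often. Your check that both regions are nonempty (via $L_3=L_2$, and via a line through $p$ crossing $L_1$) fills in a point the paper leaves implicit; the only blemish is your claim that genericity forces $p\notin L$, which is unjustified but also unnecessary, since for $p\in L$ you can take $L_2$ to be the perpendicular to $L$ at $p$ and the same argument runs with $d=0$.
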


\begin{proof}

The proof is very similar to that of Proposition \ref{prop:EE}. Since $A$ is a rotation of angle $\theta_A$, $A^n$ is a rotation of angle $n \cdot \theta_A \in \mathbb{R}/ 2\pi \mathbb{Z}$. Since $\theta_A$ is assumed to be an irrational multiple of $2\pi$, the sequence $n \cdot \theta_A$ is dense in $\mathbb{R}/ 2\pi \mathbb{Z}$. 

\vspace{2mm} With the notation of Lemma \ref{lem:prodHR}, we see that there are infinitely many $n$ such that $n \cdot \theta_A \in I(t,d)$ and infinitely many in $S^1 \setminus \overline{I(t,d)}$, where $t$ is the translation length of $B$ and $d$ the distance between the centre of $A$ and the axis of $B$. For these $n$s, applying Lemma \ref{lem:prodHR} we get that $BA^n$ is a rotation or hyperbolic respectively. This proves Proposition \ref{prop:EE}.

\end{proof}

\subsection{From $(\mathcal{H}, \mathcal{E})$.}

In this paragraph, we prove the following informal statement.

\begin{center}

If we start from a pair $(A,B) \in \mathcal{H} \times \mathcal{E}$ for which we do one step of accelerated renormalization, then we land \begin{itemize}
\item finitely many times in $( \mathcal{H}, \mathcal{E})$ 
\item infinitely many times in $ (\mathcal{H},\mathcal{H})^+$.
\end{itemize}

\end{center}

\begin{proposition}\label{prop:from_HE}
	Assume $(A,B) \in \mathcal{H} \times \mathcal{E}$. Then there exists $N = N(A,B) \in \mathbb{N}$
	\begin{enumerate}
		\item for all $n < N(A,B)$, $(A, BA^n) \in  \mathcal{H} \times \mathcal{E}$;

		\item for all $n \geq N(A,B)$, $(A, BA^n) \in  (\mathcal{H} \times \mathcal{H})^+$;
	\end{enumerate}
\end{proposition}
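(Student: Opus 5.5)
We prove Proposition~\ref{prop:from_HE} by applying Lemma~\ref{lem:prodHR} to the pair $(A^n, B)$, with $n$ varying. The approach is to track, as $n$ grows, a fixed geometric configuration given by the centre of $B$ and the axis of $A$, and to show that the angle of $B$ leaves the elliptic window $I(t,d)$ exactly once and never returns. Write $A$ as a translation of length $t>0$ along its axis $L$, and $B$ as a rotation of angle $\theta$ about $p$. Let $d$ be the distance from $p$ to $L$. Then $A^n$ is a translation of length $nt$ along the same axis $L$, while $B$, and hence $p$, $q$, $d$ and $\theta$, do not depend on $n$. Since $BA^n$ is conjugate to $A^nB$ (by $A^n$), the type of $BA^n$ is that of the product of a translation of length $nt$ with the fixed rotation $B$. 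By Lemma~\ref{lem:prodHR}, $BA^n$ is elliptic if and only if $\theta \in I(nt,d)$. By genericity it is never parabolic, so otherwise it is hyperbolic.

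The core is a monotonicity statement for the family of intervals $I(s,d)$ with $d$ fixed: they should be nested and decreasing in $s$, and shrink to a degenerate interval as $s\to\infty$. To see this, use the decomposition of the proof of Lemma~\ref{lem:prodHR}. Let $L_2$ be the line through $p$ and $q$, and let $L_1^{(s)}$ be the line perpendicular to $L$ at distance $s/2$ from $q$. Let $L_3$ be the line through $p$ at angle $\theta/2$ from $L_2$. Then $AB$ is elliptic exactly when $L_3$ meets $L_1^{(s)}$. As $s$ increases, the lines $L_1^{(s)}$ are pairwise disjoint, being perpendiculars to the same line $L$. The half-plane cut out by $L_1^{(s)}$, on the side away from $p$, shrinks monotonically, and its boundary arc on $\partial\mathbb{H}$ shrinks to the attracting endpoint of $L$. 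A line through $p$ meets $L_1^{(s)}$ iff exactly one of its endpoints lies in that arc; I would treat this as an exact criterion, but note that it needs checking in the configuration where $L_3$ could have both endpoints in the arc. Hence the set of angles $\theta$ for which $L_3$ meets $L_1^{(s)}$ is an interval that decreases with $s$, and its intersection over all $s$ is at most the single angle for which $L_3$ hits the attracting endpoint of $L$. I expect this monotonicity to be the main obstacle: one must check carefully that the set of admissible $\theta$ is a single arc, rather than two arcs symmetric under $\theta \mapsto \theta+\pi$ corresponding to the two endpoints of $L_3$. I would resolve this by working in $\mathrm{PSL}(2,\mathbb{R})$, where $\theta/2$ is defined modulo $\pi$, and orienting $L_3$ by its endpoint.

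Given this monotonicity, set
$$N(A,B) := \min\{ n\in\mathbb{N} \ | \ \theta\notin I(nt,d)\}.$$
This minimum exists because $\bigcap_s I(s,d)$ is at most one angle. That angle corresponds to $AB$ sharing a fixed point with $A$, which would produce a parabolic or a non-generic configuration, so genericity excludes it and $\theta$ eventually leaves the intervals. For $n<N$, $\theta\in I(nt,d)$, so $BA^n$ is elliptic and $(A,BA^n)$ is of type $(\Hyp,\Ell)$. For $n\ge N$, nestedness gives $\theta\notin \overline{I(nt,d)}$, using genericity to exclude the endpoints. Then part~(3) of Lemma~\ref{lem:prodHR} shows that $BA^n$ is hyperbolic and that the pair $(A^n, A^nB)$ is of type $(\Hyp,\Hyp)^+$.

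It remains to transfer this to $(A,BA^n)$. The axis of $A^n$ coincides with that of $A$, with the same direction, so the pair $(A, A^nB)$ is also of type $(\Hyp,\Hyp)^+$. Conjugating by $A^n$ fixes $A$ and sends $A^nB$ to $BA^n$, and it preserves the relative orientation of axes. Therefore $(A,BA^n)$ is of type $(\Hyp,\Hyp)^+$, which completes the proof.
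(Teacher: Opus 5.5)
Your argument is the paper's. You write $A^nB=\mathcal{R}_{L_1^{(n)}}\circ\mathcal{R}_{L_3}$, with perpendiculars $L_1^{(n)}$ to the axis $L$ marching toward its attracting endpoint $a^+$. Once $L_3$ misses one of them it misses all later ones, and the $(\Hyp,\Hyp)^+$ type comes from Lemma~\ref{lem:prodHR}. Your final conjugation by $A^{n}$ is a useful addition, since the paper's proof actually treats $(A,A^nB)$ while the statement concerns $(A,BA^n)$.

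The two worries you raise disappear if, as the paper does, you fix $\theta$ (hence $L_3$) and let $n$ vary, instead of studying the intervals $I(s,d)$ in $\theta$. Distinct perpendiculars to $L$ are disjoint, so $p\in L_2$ lies outside every far half-plane $H_s$ bounded by $L_1^{(s)}$. By convexity, $L_3$ therefore cannot have both endpoints in $\partial_\infty H_s$. Since the $H_s$ are nested, $\{n : L_3\cap L_1^{(n)}\neq\emptyset\}$ is an initial segment.

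The step that is wrong is your treatment of the exceptional angle. The intersection $\bigcap_s I(s,d)$ is exactly one angle $\theta^*$, the one for which $L_3$ ends at $a^+$. Since $a^+$ is interior to every arc $\partial_\infty H_s$, $L_3$ then crosses every $L_1^{(n)}$ inside $\mathbb{H}$. So \emph{all} $A^nB$ are elliptic and no $N$ exists. In particular $AB$ does not share a fixed point with $A$ at $\theta^*$, so your reason for discarding this angle does not apply.

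The correct reason uses the decomposition itself: $\mathcal{R}_{L_3}$ fixes $a^+$ and $\mathcal{R}_{L_2}$ swaps the endpoints $a^{\pm}$ of $L$, so $B(a^+)=a^-$. Equivalently, writing $B=(b_{ij})$ in a basis where $A=\mathrm{diag}(\lambda,\lambda^{-1})$ with $\lambda>1$, one has $\tr(A^nB)=\lambda^n b_{11}+\lambda^{-n}b_{22}$, which stays bounded iff $b_{11}=0$. Then $A$ and $BAB^{-1}$ are hyperbolic with exactly one common fixed point $a^-$ (not two, since $B$ is not an involution). Hence $[A,BAB^{-1}]$ is a nontrivial parabolic in the group generated by $A$ and $B$. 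This is precisely what the standing assumption of Section~\ref{sec:tech_lem} rules out.

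The paper itself only says that $N$ is ``easily seen to exist'', so you were right to flag this point. Your justification, however, has to be replaced by the one above.
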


\begin{proof}
We need to understand the type of the $BA^n$ for arbitrary $n \in \mathbb{N}$. Using the same notation as in the proof of Lemma \ref{lem:prodHR}, we have $A = \mathcal{R}_{L_1} \circ \mathcal{R}_{L_2}$ and $AB = \mathcal{R}_{L_2} \circ \mathcal{L_3}$ where $L_2$ is the line through the centre of $B$ which is perpendicular to the axis of $A$. $A^n$ can be written $\mathcal{R}_{L_1^n} \circ \mathcal{R}_{L_2}$ where $L_1^n$ is a sequence of lines that are orthogonal to the axis of $A$, whose intersection point is an order sequence along this axis.

\vspace{2mm} \noindent This way one sees that if $L_1^{n_0}$ does not intersect
$L_3$, neither do all the $L_1^{n}$ for $n \geq n_0$. Define $N(A,B)$ the
smallest such $n_0$, which is easily be seen to exist. In the case where
$L_1^{n}$ does not intersect $L_3$, since $A = \mathcal{R}_{L_1} \circ
\mathcal{R}_{L_2}$ and $A^n B = \mathcal{R}_{L_1^n} \circ \mathcal{R}_{L_3}$,
and that the lines $L_1^n, L_1, L_2$ and $L_3$ are in the configuration shown in
Figure~\ref{fig:prod_HE}, the pair $(A,A^nB)$ is of type $(\mathcal{H}, \mathcal{H})^+$.

\end{proof}

\subsection{From $(\mathcal{H}, \mathcal{H})^{-}$.}
In this paragraph, we prove the following informal statement.

\begin{center}

If we start from a pair $(A,B) \in (\mathcal{H} \times \mathcal{H})^-$ for which we do one step of accelerated renormalization, then we land \begin{itemize}
\item finitely many times on $( \mathcal{H}, \mathcal{H})^-$;
\item finitely many times on $( \mathcal{H}, \mathcal{E})$;
\item infinitely many times on $ (\mathcal{H},\mathcal{H})^+$.
\end{itemize}

\end{center}

\begin{proposition}

Assume $(A,B) \in  (\mathcal{H} \times \mathcal{H})^-$. Then there exists $N_1 = N_1(A,B)$ and $N_2 = N_2(A,B) \in \mathbb{N}$, $N_1 \leq N_2$ such that 

\begin{enumerate}
\item if $n < N_1$, then $(A, BA^n) \in  (\mathcal{H} \times \mathcal{H})^-$;

\item if $N_1 \leq n < N_2$, then $(A, BA^n) \in  \mathcal{H} \times \mathcal{E}$;

\item if $n > N_2$, then $(A, BA^n) \in  (\mathcal{H} \times \mathcal{H})^+$.
\end{enumerate}

\end{proposition}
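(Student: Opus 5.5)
The plan is to reduce to Lemma~\ref{lem:prodHH}, following the reflection-decomposition strategy of Proposition~\ref{prop:from_HE}.

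\textbf{Setup.} Let $L_2$ be the common perpendicular to $L_A$ and $L_B$, and let $L_3$ be the line perpendicular to $L_B$ with $B=\mathcal{R}_{L_2}\circ\mathcal{R}_{L_3}$. Since $A^n$ is a translation along $L_A$ of length $n t_A$, we can write
\[
A^n=\mathcal{R}_{L_1^n}\circ\mathcal{R}_{L_2},
\]
where $L_1^n$ is the perpendicular to $L_A$ through the point at distance $n t_A/2$ from $L_2\cap L_A$. Then
\[
A^nB=\mathcal{R}_{L_1^n}\circ\mathcal{R}_{L_3}.
\]
The pair $(A^n,B)$ is still of type $(\Hyp,\Hyp)^-$ and has the same $d$ and $t_B$. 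So Lemma~\ref{lem:prodHH}, applied with translation length $n t_A$, gives the type of $A^nB$:
\begin{itemize}
\item hyperbolic with $(A^n,A^nB)$ of type $(\Hyp,\Hyp)^-$ if $n t_A<t_1$;
\item elliptic if $t_1<n t_A<t_2$;
\item hyperbolic with $(A^n,A^nB)$ of type $(\Hyp,\Hyp)^+$ if $n t_A>t_2$.
\end{itemize}
Here $t_1=t_1(t_B,d)$ and $t_2=t_2(t_B,d)$ do not depend on $n$. I set $N_1=\lceil t_1/t_A\rceil$ and $N_2=\lceil t_2/t_A\rceil$. The genericity assumption excludes parabolics, so the boundary values $n t_A\in\{t_1,t_2\}$ never occur.

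\textbf{From $A^n$ back to $A$.} The statement is about the pair $(A,BA^n)$ (or $(A,A^nB)$, which is conjugate by $A^n$, so only the second matrix's type and its position relative to $A$ matter). The axis of $A^n$ equals the axis of $A$ with the same orientation. Hence the $\pm$ refinement of $(A^n,A^nB)$ coincides with that of $(A,A^nB)$. Conjugating by $A^{-n}$, which preserves the oriented axis of $A$, the same holds for $(A,BA^n)$.

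\textbf{Monotonicity and the main obstacle.} The three regimes occur in this order as $n$ increases. Geometrically, the foot point of $L_1^n$ moves monotonically along $L_A$ away from $L_2$. Once $L_1^n$ is disjoint from $L_3$ on the far side, it stays so for all larger $n$, by the same nesting argument as in Proposition~\ref{prop:from_HE}.

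The main obstacle is that Lemma~\ref{lem:prodHH} is stated only through pictures. I therefore need to check carefully, with orientations, the ordering of the endpoints of $L_1^n$ relative to $L_3$. The intersection pattern has to pass from \say{disjoint, on the near side} to \say{intersecting} to \say{disjoint, far side}, each exactly once. I would verify this by parametrizing the endpoints of $L_1^n$ on $\partial\mathbb{H}$, which move monotonically toward the attracting fixed point of $A$. Then $\{n : L_1^n\cap L_3\neq\emptyset\}$ is an interval of integers, possibly empty, in which case $N_1=N_2$. The degenerate case $t_1\le t_A$, giving $N_1=0$ or $1$, is harmless.
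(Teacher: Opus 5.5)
This is essentially the paper's proof. It too replaces $A$ by $A^n$ (same oriented axis, translation length $nt_A$), applies Lemma~\ref{lem:prodHH}, and uses that $nt_A$ increases to $+\infty$. Your conjugation by $A^{n}$ from $(A,A^nB)$ to $(A,BA^n)$ and your explicit $N_i=\lceil t_i/t_A\rceil$ just spell out what the paper leaves implicit.

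One caution about the \emph{each exactly once} check you propose. Suppose $L_3$ ends at the attracting fixed point of $A$; equivalently, in an eigenbasis $A=\mathrm{diag}(\lambda,\lambda^{-1})$ with $\lambda>1$, $B_{11}=0$, so that $\tr(A^nB)=\lambda^{-n}\tr B\to 0$. Then $A^nB$ is elliptic for all large $n$ and no finite $N_2$ exists. This codimension-one situation is not excluded by the no-parabolic/no-finite-order assumption, so it is a defect of Lemma~\ref{lem:prodHH} and of the statement itself, shared by the paper's proof, rather than of your reduction.
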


\begin{proof}

First note that, for $n >0$, the pair $(A, BA^n)$ is of type $(\mathcal{H}, \mathcal{H})^+$, $(\mathcal{H}, \mathcal{H})^-$ or $(\mathcal{H}, \mathcal{E})$ if and only if the pair $(A^n, BA^n)$ is of type $(\mathcal{H}, \mathcal{H})^+$, $(\mathcal{H}, \mathcal{H})^-$ or $(\mathcal{H}, \mathcal{E})$ respectively.

\vspace{2mm}\noindent The Proposition is thus a direct consequence of Lemma \ref{lem:prodHH} coupled with the fact the the translation length of $A^n$ is an increasing function of $n$ which tends to infinity.

\end{proof}

\subsection{Uniform hyperbolicity for elements $(\mathcal{H}, \mathcal{H})^{+}$.}

\begin{proposition}\label{prop:uh}
Assume $(A,B) \in  (\mathcal{H} \times \mathcal{H})^+$. Then there exist constants $\mu = \mu(A,B) > 1$ and $C = C(A,B) > 0$ such that the following holds. For any $2$-IET $T$, if $\rho$ is the cocycle associated to $(A,B)$, for all $x$ and for all $n \in \mathbb{N}$ we have 
$$ ||\rho(T^{n-1}(x)) \cdots \rho(T^{2}(x)) \rho(T(x)) \rho(x) || \geq C \cdot \mu^n.$$
and
$$ r \left( 
\rho(T^{n-1}(x)) \cdots \rho(T^{2}(x)) \rho(T(x)) \rho(x)
\right)
\geq C \cdot \mu^n,$$
where $ r $ is the spectral radius of a matrix.
\end{proposition}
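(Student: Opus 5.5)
The plan is to prove uniform hyperbolicity by exhibiting an invariant cone field, in fact a single cone. For the $(\Hyp,\Hyp)^+$ configuration, I will show that $A$ and $B$ contract a common closed interval of $\partial\mathbb{H}$ strictly into its interior, and then convert this common contraction into exponential growth of the norm and of the spectral radius of every product.

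First, the boundary picture. Let $a^\pm, b^\pm$ denote the attracting and repelling fixed points of $A$ and $B$ on $\partial \mathbb{H} = \mathbb{RP}^1$. The type $(\Hyp,\Hyp)^+$ means the axes are disjoint and the attracting points are consecutive. So $\partial\mathbb{H}$ splits into two disjoint closed arcs: $J$ contains $a^+$ and $b^+$ and no repelling point, and $J'$ contains $a^-$ and $b^-$. I expect that $J$ can be chosen with $A(J) \subset \mathrm{int}(J)$ and $B(J)\subset \mathrm{int}(J)$, for instance by taking $J$ slightly larger than the arc $[a^+,b^+]$. This is where the ordering matters: $A$ moves points of the complement of $\{a^\pm\}$ toward $a^+$, and on the arc between $b^+$ and $a^+$ that avoids the repelling points, both maps push inward. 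Lifting to $\mathbb{R}^2$, $J$ corresponds to a closed cone $\mathcal{C}$ with $A\mathcal{C}, B\mathcal{C} \subset \mathrm{int}(\mathcal{C})\cup\{0\}$.

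Second, quantitative expansion. I will equip $\mathrm{int}(\mathcal C)$ with its Hilbert projective metric, under which both $A$ and $B$ are strict contractions of $J$ with a uniform factor $\kappa<1$, since their images lie in a compact sub-interval. Separately, I want a uniform vector expansion: there should be a norm (or quadratic form adapted to the cone) and $\mu>1$ with $\|Mv\|\ge \mu\|v\|$ for $M\in\{A,B\}$ and $v\in \mathcal{C}$.

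The standard route is to pass to a power. Any product of length $k$ maps $\mathcal C$ into a cone of projective diameter $O(\kappa^k)$. Writing $v\in\mathcal C$ in a basis adapted to $J$ makes the cocycle on $\mathcal C$ act by matrices with positive entries, so one can use the positive-matrix estimate $\|Mv\|_1 \geq m(M)\|v\|_1$ for $v$ in the positive quadrant, where $m(M)$ is the minimal column sum. Every product $w$ of $A$ and $B$ of length $n$ then satisfies $\|w v\|\ge c\,\mu^n \|v\|$ for $v \in \mathcal C$. Because the cocycle over any 2-IET along any orbit is such a word, this gives the norm bound, with constants independent of $T$ and $x$.

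Third, the spectral radius. A word $w$ of length $n$ maps $\mathcal C$ strictly inside itself, so it has an eigenvector $v_w\in\mathcal C$ (Brouwer, or Perron–Frobenius in the adapted basis). Its eigenvalue $\lambda$ satisfies $|\lambda|\,\|v_w\| = \|wv_w\|\geq c\mu^n\|v_w\|$, hence $r(w)\ge c\mu^n$.

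The main obstacle is the second step: obtaining a genuine uniform expansion constant $\mu>1$ on the cone, rather than mere projective contraction, from only the geometric hypothesis. I would first change basis so that $J=[0,\infty]$. In that basis $A$ and $B$ have all entries positive, up to sign, and the only real work is then showing that minimal column sums of long products grow exponentially. This follows from $|\tr|>2$ together with the projective contraction, possibly after replacing $\mu$ by a smaller constant and adjusting $C$.
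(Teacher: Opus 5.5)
Your skeleton is the paper's. The $(\Hyp,\Hyp)^+$ configuration gives a closed arc $J$ with $A(J),B(J)\subset\mathrm{int}(J)$, hence a common strictly invariant cone. Uniform expansion on that cone gives the norm bound independently of $T$ and $x$, and an eigenvector in the cone gives the spectral radius bound. Steps 1 and 3 are correct. So is the reduction of Step 2, via supermultiplicativity of the minimal column sum for positive matrices, to showing that all words of some fixed length $k$ have minimal column sum $\ge\mu_0>1$.

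The paper does not prove this expansion either; it simply asserts it via the cone criterion. But the reason you give, that it ``follows from $\lvert\tr\rvert>2$ together with the projective contraction'', is not valid. Take $M=\bigl(\begin{smallmatrix}\epsilon & N\\ \epsilon & N+\epsilon^{-1}\end{smallmatrix}\bigr)$. It has determinant $1$, positive entries, and trace $>2$ (indeed as large as you like). It maps the quadrant onto a cone whose angular width tends to $0$ as $\epsilon N\to\infty$, yet its first column sum is $2\epsilon$. Smallness of the projective image of the whole cone gives no control on vectors near the boundary of the cone, where the Hilbert metric degenerates. The traces are irrelevant too: any positive matrix in $\SL(2,\R)$ already has trace $\ge 2\sqrt{1+qr}>2$.

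What is missing is the $\det=1$ mechanism that turns projective contraction into vector growth, together with a separate treatment of the boundary of the cone. Let $J'\subset\mathrm{int}(J)$ be a compact arc containing $A(J)\cup B(J)$, and $\mathcal{C}'$ its cone. The first letter of any word sends $\mathcal{C}$ into $\mathcal{C}'$, and every later letter preserves $\mathcal{C}'$. Either of the following then closes the gap.
\begin{enumerate}
\item For $M\in\SL(2,\R)$ and unit $v$, the derivative of the projective action at $[v]$, in the angle coordinate, equals $\lVert Mv\rVert^{-2}$. On $J'$ the Hilbert metric (contracted by $\kappa$ under each letter, by Birkhoff) and the angular metric are comparable. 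Hence a word $u$ of length $m$ satisfies $\lVert uv\rVert\ge c\,\kappa^{-m/2}\lVert v\rVert$ on $\mathcal{C}'$. Applying the first letter separately gives $\lVert wv\rVert\ge c'\kappa^{-n/2}\lVert v\rVert$ on all of $\mathcal{C}$ for words $w$ of length $n$.
\item In your adapted basis, put $Q(x,y)=xy$ and let $M=\bigl(\begin{smallmatrix}p&q\\ r&s\end{smallmatrix}\bigr)$ be positive with $ps-qr=1$. AM--GM gives $Q(Mv)\ge(\sqrt{qr}+\sqrt{1+qr})^2\,Q(v)$ on the quadrant, and this factor is $>1$ because $q,r>0$. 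Since $\sqrt{Q}\le\lVert\cdot\rVert$ everywhere and $\sqrt{Q}\ge c\lVert\cdot\rVert$ on $\mathcal{C}'$, you get $\lVert wv\rVert\ge C\mu^n\lVert v\rVert$ on $\mathcal{C}$. Here $\mu$ is the minimum of $\sqrt{qr}+\sqrt{1+qr}$ over $M\in\{A,B\}$.
\end{enumerate}
Either argument completes Step 2 and makes the column-sum detour unnecessary. Your Step 3 then yields the spectral radius bound exactly as written.
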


\begin{proof}
The condition that $(A,B) \in  (\mathcal{H} \times \mathcal{H})^+$ implies the
existence of an interval $I$ in $\mathbb{RP}^1$ comprising both attracting fixed
points of $A$ and $B$, but not containing any of their repelling points.
This implies that the cone $\mathcal{C}_I$ in $\mathbb{R}^2$ defined as the set of lines whose associated direction in $\mathbb{RP}^1$ is in $I$ is 

\begin{itemize}

\item stable under both $A$ and $B$;

\item there exists a scalar product $|| \cdot ||_1$ on $\mathbb{R}^2$ such that
	the restriction of both $A$ and $B$ to $\mathcal{C}_I$  is uniformly
	dilating, by some factor $\mu(A,B)$.
\item each matrix in the semigroup generated by $ A $ and $ B $ is hyperbolic,
	and its attracting fixed point is in $ I $.
\end{itemize} 
This is the classical \emph{cone-criterion} for uniform hyperbolicity,
see~\cite{avilaUniformlyHyperbolicFiniteValued2010}.

The proof of the proposition follows from the fact that 

$$ ||\rho(T^{n-1}(x)) \cdots \rho(T^{2}(x)) \rho(T(x)) \rho(x) \vec{v} ||_1 \geq \mu(A,B)^n || \vec{v} ||_1 $$ for any vector $\vec{v} \in \mathcal{C}_I$ and the equivalence of norms on finite dimensional vector spaces.

\end{proof}

\section{Consequences for the Teichmüller flow acting on the character variety
bundle.}\label{sec:flow}

We now explain how our result can be reinterpreted in terms of an extension of the Teichmüller flow introduced by Goldman and Forni in \cite{ForniGoldman} for compact character varieties. We only give sketches of proofs as it is not the main point of our article.

\subsection{Moduli space of flat bundles} In this paragraph we briefly recall
the construction of the ``moduli space of flat bundles'' of Forni and Goldman. 

\vspace{2mm} Recall that the Teichmüller space of translation tori with a marked point is naturally isomorphic to $\mathrm{PSL}(2,\mathbb{R})$ (which one can see as the unit tangent bundle of the hyperbolic plane). The \textit{moduli} space of such structures is precisely the quotient of the unit tangent bundle by the action of the mapping class group of the torus with one marked point $\mathrm{SL}(2,\mathbb{Z})$ by isometries on $T^1 \mathbb{H}$.

\vspace{2mm} Now consider $\cv_c(\mathrm{T}^*, \mathrm{SL}(2,\mathbb{R}))$ a relative character variety, one can define 

$$ \mathcal{M}_c := \big( T^1 \mathbb{H} \times \cv_c(\mathrm{T}^*,
\mathrm{SL}(2,\mathbb{R})) \big)/ (z, [\rho]) \sim (A \cdot z, [\rho \circ
A^{-1}]).$$ The space $ \mathcal{M}_c $ is a $\cv_c(\mathrm{T}^*,
\mathrm{SL}(2,\mathbb{R}))$-bundle over $\mathrm{PSL}(2,\mathbb{R})/
\mathrm{PSL}(2,\mathbb{Z})$ the (unit tangent bundle of the) modular surface,
endowed with a flat connection inherited from that of the canonical product
structure of $ T^1 \mathbb{H} \times \cv_c(\mathrm{T}^*,
\mathrm{SL}(2,\mathbb{R}))$ which descends to the quotient.

\vspace{2mm} The geodesic flow on the modular surface lifts to $\mathcal{M}_c$ to define a lifted flow 

$$ (\Phi_t) : \mathcal{M}_c \longrightarrow \mathcal{M}_c $$ obtained via direct
parallel transport: if $(z, [\rho])$ is a point in $\mathcal{M}_c$, the orbit
$(\Phi_t(z, [\rho]))_{t \in \mathbb{R}}$ is the only flat section of the bundle $\mathcal{M}_c \longrightarrow \mathrm{PSL}(2,\mathbb{R})/ \mathrm{PSL}(2,\mathbb{Z})$ which projects onto the geodesic through $z$. For more detail about the construction we refer to \cite{ForniGoldman}.

\subsection{Divergence of the flow}

Our results (Theorems~\ref{thm:repr_spectrum}and~\ref{thm:repr_mcg}) have the
following corollary. Recall that an orbit of a flow is said to be divergent if it eventually leaves all compact sets. 

\begin{theorem}
The set of divergent orbits for the lift of the geodesic flow $ (\Phi_t) : \mathcal{M}_c \longrightarrow \mathcal{M}_c $ is an open and dense set. Its complement is exactly the set of recurrent orbits, and it is a lamination which is transversally a Cantor set. 
\end{theorem}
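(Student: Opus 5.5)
The plan is to reduce the statement about $(\Phi_t)$ to Theorems~\ref{thm:repr_spectrum} and~\ref{thm:repr_mcg} through the standard dictionary between the geodesic flow on the modular surface and continued fractions. Lift a point of $\mathcal{M}_c$ to $(z,[\rho]) \in T^1\mathbb{H} \times \cv_c$. The forward geodesic through $z$ has an endpoint $\xi \in \partial \mathbb{H} = \mathbb{RP}^1$, which we read as a projective measured lamination $\lambda_\xi$ on $\Ts$. By the Series/Arnoux coding, the successive fundamental domains of $\mathrm{PSL}(2,\mathbb{Z})$ crossed by the geodesic ray correspond, up to bounded distance, to the trajectory $(\varphi_n)$ of Section~\ref{sub:traj_mcg} associated with $\lambda_\xi$. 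Because $\mathcal{M}_c$ is defined through the identification $(A z,[\rho\circ A^{-1}])$, the point $\Phi_t(z,[\rho])$, once brought back to a fixed compact fundamental domain in the base, has fibre coordinate $\varphi_{n(t)}\rho$ up to a uniformly bounded mapping class correction, where $n(t)\to\infty$ along the ray. The first step is therefore a lemma: an orbit is divergent if and only if either its projection to the modular surface diverges (which happens exactly when $\xi$ is rational, i.e.\ a cusp excursion) or $\varphi_n\rho \to \infty$ in $\cv_c$. The bounded corrections preserve compactness, since the finitely many mapping classes involved act by homeomorphisms of $\cv_c$.

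Second, we apply Theorem~\ref{thm:repr_mcg}. If $\lambda_\xi \notin \spe(\rho)$, then $\varphi_n\rho\to\infty$, so the orbit diverges. If $\lambda_\xi\in\spe(\rho)$ and $\xi$ is irrational, then $\varphi_{k_n}\rho$ lies in the universal compact $K$ infinitely often, at times $t_n\to\infty$. To get actual recurrence rather than mere non-divergence, we still need the base point to return to a compact set of the modular surface at those same times. So we should choose the times $t_n$ where the geodesic crosses the thick part, namely the times corresponding to the accelerated renormalizations $k_n$; these are exactly the thick-part crossing times in the Series coding. This shows that non-divergent orbits return infinitely often to a fixed compact set, which is what ``recurrent'' means here, and hence that the complement of the divergent set is exactly the recurrent set. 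For rational $\xi$, which corresponds to a finite-order IET, the base itself goes off into the cusp and the orbit diverges. Genericity of $\rho$ is used to invoke the theorems, and we should remark that non-generic fibres form a countable union of lower-dimensional sets, which has to be handled, or excluded, in the openness and density claim.

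Third, we establish the topology. The recurrent set is a union of $\Phi_t$-orbits, and it is saturated by the weak stable leaves, since two geodesics with the same forward endpoint share $\xi$ and hence share the property $\lambda_\xi\in\spe(\rho)$. Locally, in a flow box, a transversal is parametrized by $(\xi,[\rho])$ with the backward endpoint fixed, and the recurrent set reads $\{(\xi,\rho): \lambda_\xi \in \spe(\rho)\}$. For fixed $\rho$, Theorem~\ref{thm:repr_spectrum} shows that this set is a Cantor set in $\xi$. It is closed because the condition ``never lands in $(\Hyp,\Hyp)^+$'' is closed jointly in $(\xi,\rho)$: landing in $(\Hyp,\Hyp)^+$ happens at a finite stage, and this is an open condition on both the finite continued fraction prefix and the matrices. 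Density of the divergent set follows from the empty interior of each spectrum; openness follows from the closedness just established.

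The main obstacle is the lamination structure: showing that the Cantor sets $\spe(\rho)$ vary nicely with $\rho$, so that one gets a genuine product chart (Cantor set)$\times$(leaf) rather than just a closed set with Cantor slices. I would try first to exploit the fact that the spectrum is determined by finite words (the symbolic description via the diagram~\ref{fig:diag}) and that the loci $\{\rho : \text{transition at stage } n\}$ are bounded by curves in the Markov surface. This would realize the recurrent set as a nested intersection of regions bounded by smooth hypersurfaces, transverse to the $\xi$-direction. Since the paper only promises a sketch here, I would settle for this description of the recurrent set as a closed, flow- and stable-saturated set with Cantor transversals.
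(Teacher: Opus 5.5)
Your route is the same as the paper's sketch. You identify the thick-part return times of the base geodesic with the accelerated renormalization times, and use Theorem~\ref{thm:repr_mcg} to get divergence off the spectrum and returns to the universal compact $K$ on it. Openness comes from landing in $(\Hyp,\Hyp)^+$ being an open condition, and density from the empty interior of $\spe(\rho)$. Like the paper, you leave the actual lamination (product) structure unproved. There is, however, one step that fails as you have set things up: the treatment of orbits whose base escapes into the cusp.

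You declare every orbit with rational forward endpoint $\xi$ divergent. That is correct, since the base leaves every compact set. You then deduce openness of the divergent set from closedness of $\{(\xi,\rho) : \lambda_\xi\in\spe(\rho)\}$. But that closed set contains rational points. The corollary after Theorem~\ref{thm:repr_spectrum} says $\spe(\rho)$ contains a simple curve, and any finite-order IET whose last renormalized matrix is non-hyperbolic lies in the spectrum. Take such a rational $\xi_0\in\spe(\rho)$ with $\rho$ generic. Since $\spe(\rho)$ is a Cantor set, every neighbourhood of $\xi_0$ meets $\spe(\rho)$ in an uncountable set. It therefore contains irrational $\xi$ with $\lambda_\xi\in\spe(\rho)$, whose orbits are recurrent by your own second step. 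So $(\xi_0,\rho)$ is a divergent point that is a limit of recurrent ones. With your convention the divergent set is not open. The recurrent set, namely $\{\lambda_\xi\in\spe(\rho)\}$ minus the rational $\xi$, is neither closed nor transversally a Cantor set. ``Openness follows from closedness'' only works if the recurrent set coincides with $\{\lambda_\xi\in\spe(\rho)\}$, and it does not on cusp orbits.

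The paper avoids this by discarding the cusp-escaping trajectories at the outset (a countable union of weak-stable leaves) and reading the theorem on base-recurrent orbits only. You need to do the same, or measure divergence through the fibre coordinate at thick times. After that your argument reaches the same level as the paper's sketch. Likewise, the non-generic fibres have to be excluded outright rather than ``handled'', since the theorems you invoke say nothing there.
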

	
\paragraph*{\bf Sketch of proof.} Since the proof of this theorem is mostly about translating our results about renormalization of cocycles and the action of the mapping class group on particular representations in the language of Teichmüller dynamics, we only sketch how the process whereby one passes from one world to the other works. We require of our reader knowledge of the dynamics of the geodesic flow on the modular surface; we suspect the proof would be difficult to understand without prior knowledge of some facts about the latter. 

\vspace{2mm} \noindent The geodesic flow on $\mathrm{PSL}(2,\mathbb{R})/ \mathrm{PSL}(2,\mathbb{Z})$ has the property that a geodesic either diverges in the cusp or comes back infinitely many times to an open set $U$ which projects onto a simply connected subset of $\mathbb{H}/\mathrm{PSL}(2,\mathbb{Z})$. In particular the flat bundle $\mathcal{M}_c$ is trivial over $U$, and therefore identifies in a canonical way to 

$$ U \times \cv_c(\mathrm{T}^*, \mathrm{SL}(2,\mathbb{R})).$$ The set of trajectories that escape in the cusp is a countable union of $2$-dimensional submanifolds of $\mathrm{PSL}(2,\mathbb{R})/ \mathrm{PSL}(2,\mathbb{Z})$ and we will therefore dismiss them. 

\vspace{2mm} \noindent Take a point $x \in \mathbb{U}$, and let $t_n$ be an
increasing sequence of times $(t_n)$ tending to infinity such that  $g_{t_n}(x)
\in U$ (we think of $(t_n)$ as a sequence of return times to $U$ after possible
excursions in the cusp). Closing the geodesic from $x$ to $g_{t_n}(x)$ within
$U$ produces a sequence of elements $A_n$ in $\mathrm{SL}(2,\mathbb{Z})$, which
has the property that if we started from $z = (x,[\rho]) \in U \times \cv_c(\mathrm{T}^*, \mathrm{SL}(2,\mathbb{R}))$, we have 

$$ \Phi_{t_n}(z) = (g_{t_n}(x), [\rho \circ A_n]).$$ Furthermore, the sequence $A_n$ is precisely the sequence given by the \textbf{accelerated} renormalization of the cocycle associated to $\rho$ for the measured foliation corresponding to the geodesic through $x$. Although classical, this part would probably require more rigorous justification. It will only hold true for a careful choice of $U$ and careful choice of cross section of the foliation associated to $x$.

\vspace{2mm} If we had chosen $\rho$ to be a generic representation, we would have obtained that for a choice of $x$ in an open and dense set, the iterated renormalization of the associated IET would eventually land in $(\mathcal{H}, \mathcal{H})^+$.
From that point onwards,
by Theorem~\ref{thm:repr_mcg},
renormalizations of the associated cocycle
$ (A_n,B_n) $
would grow like
$ e^{C q_n} $.
Appealing again to our reader's knowledge of the connection between the geodesic flow on  $\mathrm{PSL}(2,\mathbb{R})/ \mathrm{PSL}(2,\mathbb{Z})$ and continued fraction, we have that $q_n$ is equivalent to $t_n$. Now the condition that renormalization of a cocycle land in $(\mathcal{H}, \mathcal{H})^+$ after $n$ renormalizations is an open condition in the representation; so we have proved the following statement:

\begin{center}
	\it
	There is an open and dense set $V$ of $\mathcal{M}_c$ such that if $z \in V$,
	$\Phi_t(z)$ tends to infinity as $t$ tends to infinity. Furthermore, if the
	projection of $(\Phi_t(z))$ is recurrent in the modular surface, the
	``projection'' of $\Phi_t(z)$ onto the fibre $\cv_c(\mathrm{T}^*, \mathrm{SL}(2,\mathbb{R}))$.
\end{center}

The latter projection onto $\cv_c(\mathrm{T}^*, \mathrm{SL}(2,\mathbb{R}))$ only makes sense over $U$, where it is defined using the trivialisation coming from the flat connection.

\vspace{2mm} Finally, let $F$ be the complement of $V$.
If $z = (x,\rho) \in F$ and $x$ corresponds to an irrational measured foliation
(\textit{i.e.} the orbit of $x$ under the geodesic flow is recurrent to $U$),
the iterated accelerated renormalizations of $\rho$ over the measured foliation
corresponding to $x$ remain bounded (by Theorem~\ref{thm:repr_mcg}), it will therefore be the
 case for the orbit $\Phi_t(z)$ when restricted to $U \times \cv_c(\mathrm{T}^*,
 \mathrm{SL}(2,\mathbb{R}))$. $F$ therefore plays the role of a ``trapped set''
 for the dynamics of $\Phi_t$, which is the analogue of the set of recurrent
 trajectories for the geodesic flow on a geometrically finite hyperbolic surface
 of infinite volume. The funnels of such a surface act as the subset $V$ of points whose trajectories contain a neighbourhood that escapes directly into the funnel. 

 \section{Open questions}\label{sec:open}

\subsection{Spectrum of representations}

For this series of questions, we consider $\Sigma$ a closed surface of genus $g \geq 2$ and $\rho : \pi_1 \Sigma \longrightarrow \mathrm{SL}(2,\mathbb{R})$ a non-elementary\footnote{A representation is called non-elementary if its image is not Zariski-dense in $\mathrm{SL}(2,\mathbb{R})$.} representation which we assume not to be Fuchsian. One can define its spectrum in the exact same way as we have done in this article: it is the set of measured laminations for which the cocycle associated to $\rho$ has vanishing Lyapunov exponent.

\begin{question}
What is the topological structure of the spectrum of $\rho$?
\end{question}

The spectrum is going to be a subset of the space of measured laminations, which is known to be a sphere of dimension $6g - 7$. It seems reasonable to conjecture that it is Cantor-like (probably it is locally the product of a Cantor set with a $(6g - 8)$-dimensional disk).

\vspace{2mm} It is known that the \textit{marked length spectrum} fully determines a Fuchsian representation. We suggest a rigidity problem analogous to this one for non-Fuchsian representation.
It is also closely related to the \emph{ending laminations conjecture} (now a theorem)
for Kleinian representations.
  
\begin{question}
If $\rho_1$ and $\rho_2$ are two non-simple Fuchsian representations into $\mathrm{SL}(2,\mathbb{R})$ that have the same spectrum (as a subset of the space of measured laminations), is it true that they are conjugate?
\end{question}

We have no particular reason to believe that it should be the case other than the fact the spectrum captures a lot of geometric information about a representation;  it would be very surprising if it were not sufficient to completely characterise it.

\vspace{2mm} We proved in the case of representations $\rho : \pi_1 \mathrm{T}^* \longrightarrow \mathrm{SL}(2,\mathbb{R})$ that the spectrum is a Cantor set. A general question that one can ask is that of its \textit{size}.

\begin{question}
When the spectrum of $\rho$ is a Cantor set, does it have zero measure? Can one compute its Hausdorff dimension?
\end{question}

\paragraph*{\it The $\mathrm{SL}(2,\mathbb{C})$-case.} The definition of the spectrum carries over without any changes to the case where the target group is $\mathrm{SL}(2,\mathbb{C})$, which is of interest since it corresponds to hyperbolic geometry in dimension $3$. 

\noindent We restrict our question to the case where $\Sigma$ is a closed
surface of genus $g \geq 2$, although it has a version with boundary which is a
bit more technical to state, and of which we spare our reader.  A non-elementary
representation $\rho : \pi_1 \Sigma \longrightarrow \mathrm{SL}(2,\mathbb{C})$
can be quasi-Fuchsian, in which case its spectrum is empty. The boundary of the
quasi-Fuchsian locus contains representations which can have a pair of transverse
lamination in their spectrum (their \textit{``ending laminations''}). 

\begin{question}
Let $\rho : \pi_1 \Sigma \longrightarrow \mathrm{SL}(2,\mathbb{C})$ be in the complement of the closure of the set of quasi-Fuchsian representation. Is it true that the spectrum of $\rho$ is a Cantor set?
\end{question}

\noindent Tentative numerical experiments in the case of the one-holed torus seem to suggest that a positive answer to this question is plausible.

\subsection{Cocycles over interval exchange transformations}

A fact that we build upon in this article is that studying the spectrum of a representation amounts to studying piecewise constant cocycles over interval exchange transformation. For the general case of  representations of higher genus surfaces, this fact is still going to hold true. This is a justification for the series of questions below, although we think they are interesting in their own right. 

\vspace{2mm} Indeed the study of piecewise constant cocycles with values in $\mathrm{SL}(2,\mathbb{R})$ provides a non-abelian extension of the classical ergodic theory of interval exchange maps. This is formally analogous to how the character variety bundle introduced by Forni and Goldman is a non-abelian analogue of the Kontsevich-Zorich cocycle.

\vspace{2mm} As such it would be reasonable to speculate that the theory of Higgs bundles could give a non-abelian analogue of the Hodge norm used by Forni in his study of the hyperbolicity of the Kontsevich-Zorich cocycle \cite{Forni}. However, it is unclear what sort of results to expect here, as KAM theory tells us that it cannot be fully non-uniformly hyperbolic (see \cite{FGLM}).

\vspace{2mm} \paragraph*{\it Measurable classification of
$\mathrm{SL}(2,\mathbb{R})$-cocycles.} We recall that measurable cocycles over an ergodic measurable dynamical system $(T,X,\mu)$ admit a fairly simple classification (see \cite{Thieullen}). A cocycle $A : X \longmapsto \mathrm{SL}(2,\mathbb{R})$ can be 

\begin{enumerate}
\item \textit{hyperbolic} meaning that its Lyapunov exponent is positive;
\item \textit{parabolic} meaning that it is measurably conjugate to a cocycle of the form $ x \mapsto \begin{pmatrix}
v(x) & m(x)  \\
0 & \frac{1}{v(x)}
\end{pmatrix}$ where $v : X \longrightarrow \mathbb{R}^*_+$ is a measurable function such that $\log v$ has zero average.
\item \textit{elliptic} meaning that it is measurably conjugate to a cocycle of the form $ x \mapsto \begin{pmatrix}
\cos \theta(x) & -\sin \theta(x) \\
\sin \theta(x) & \cos \theta(x)
\end{pmatrix}$ where $\theta : X \longrightarrow \mathbb{R}$ is a measurable function.
\end{enumerate}

\begin{question}

\begin{itemize}

\item Is there a pair $(T_0, \rho)$ where $T_0$ is an ergodic interval exchange transformation and $\rho$ a cocycle whose image in $\mathrm{SL}(2,\mathbb{R})$ is non-elementary which is of parabolic type?

\item Same question for elliptic type.

\item Is there such a pair which is hyperbolic but not uniformly hyperbolic?

\end{itemize}

\end{question}

Note that uniform hyperbolicity can be achieved by any choice of cocycle whose matrices satisfy a criterion analogous to the case $(\mathcal{H} \times \mathcal{H})^+$.

\vspace{2mm} \paragraph*{\it Renormalisation of cocycles.} In this article we have shown that the following statement is more or less correct. A piecewise constant cocycle over a rotation either has bounded renormalization and its Lyapunov exponent is equal to $0$, or it is uniformly hyperbolic and its renormalizations diverge to infinity at a (precise) exponential rate. 
\noindent This can be thought of as a non-abelian extension of the classical ergodic theory over rotations; and our statement is nothing but a (formal) analogue of the celebrated Denjoy-Koksma inequality. It is well-known that for IETs of four or more intervals, there is no equivalent of the Denjoy-Koksma inequality; and there are abelian cocycles with zero Lyapunov exponent for which renormalizations diverge at an exponential rate.

\begin{question}
Let $T_0$ be an ergodic IET on four or more intervals, with good Diophantine properties. Is there a non-elementary cocycle $\rho$ with zero Lyapunov exponent such that the iterated renormalizations of $\rho$ tend to infinity in the space of cocycles at exponential speed?
\end{question} 

This question is related to the existence of measurably reducible cocycles.

\vspace{2mm} \paragraph*{\it Varying the cocycle.} A natural question from the viewpoint of cocycle dynamics is to fix the base IET and vary the cocycle. There is a rich theory for smooth $\mathrm{SL}(2,\mathbb{R})$-cocycles over a fixed rotation (see \cite{AvilaFayadKrikorian,AvilaKrikorian} and references therein).

\begin{question}
Let $T_0$ be an ergodic IET on $d$ intervals with $d \geq 4$. In the space of
piecewise constant cocycles (which is $\mathrm{SL}(2,\mathbb{R})^d$), what does
the locus where the Lyapunov exponent vanishes look like?

\end{question}

\vspace{2mm} \noindent In the introduction, we expressed the hope that the
theory of ``abelian'' cocycles could help predict what happens in the non-abelian case. 
\noindent In the case of abelian cocycle, for a given (generic) IET, the space of cocycles admits a stratification reflecting a set of obstructions introduced by Forni \cite{Forni}. The locus where the Lyapunov exponent vanishes is the first non-trivial level of the stratification, and each of level correspond to a different speed at which a given cocycle diverges to infinity (positive Lyapunov exponent correspond to exponential divergence, intermediate strata to a divergence which is subexponential, and the lowest level corresponds to bounded cocycles. We ask the following vague question.

\begin{question}
Let $T_0$ be an ergodic IET on $d$ intervals with $d \geq 4$. In the space of piecewise constant cocycles (which is $\mathrm{SL}(2,\mathbb{R})^d$), can the locus where the Lyapunov exponent vanish be stratified according to Forni's obstruction as to reflect speed of divergence under renormalization?

\end{question}

\bibliographystyle{alpha}
\bibliography{biblio}

\end{document}